\newtheorem{theorem}{Theorem}[section]
\newtheorem{thmx}{Theorem}
\newtheorem{lemma}[theorem]{Lemma}
\newtheorem{proposition}[theorem]{Proposition}
\newtheorem{definition}[theorem]{Definition}
\newtheorem*{remark}{Remark}
\newtheorem*{remarks}{Remarks}
\numberwithin{equation}{section}
\newcommand{\tef}{transcendental entire function}
\newcommand\qfor{\quad\text{for }}
\newcommand \C{\mathbb{C}}
\newcommand \N{\mathbb{N}}
\newcommand \R{\mathbb{R}}
\newcommand \D{\mathbb{D}}
\def\B{\mathcal{B}}
\def\Blog{\mathcal{B}_{log}}
\def\H{\mathbb{H}}
\def\endpoints{E}
\def\escapingendpoints{\widetilde{E}}
\def\meanderingendpoints{E_{M}}
\def\escapingandmeanderingendpoints{E_{Q}}
\def\blfootnote{\xdef\@thefnmark{}\@footnotetext}
\begin{document}
%
%
%
%
\title[The topology of the set of non-escaping endpoints]{The topology of the set of non-escaping endpoints}
\author{Vasiliki Evdoridou, \, David J. Sixsmith}
\address{Institute of Mathematics\\ Polish Academy of Sciences\\
Warsaw 00--956\\
Poland}
\email{vevdoridou@impan.pl}
\address{Dept. of Mathematical Sciences \\
	 University of Liverpool \\
   Liverpool L69 7ZL\\
   UK \\ ORCiD: 0000-0002-3543-6969}
\email{djs@liverpool.ac.uk}
%
%
%
%
\begin{abstract}
There are several classes of transcendental entire functions for which the Julia set consists of an uncountable union of disjoint curves each of which joins a finite endpoint to infinity. Many authors have studied the topological properties of this set of finite endpoints. It was recently shown that, for certain functions in the exponential family, there is a strong dichotomy between the topological properties of the set of endpoints which escape and those of the set of endpoints which do not escape. In this paper, we show that this result holds for large families of functions in the Eremenko-Lyubich class. We also show that this dichotomy holds for a family of functions, outside that class, which includes the much-studied Fatou function defined by $f(z) := z + 1+ e^{-z}.$ Finally, we show how our results can be used to demonstrate that various sets are spiders' webs, generalising results such as those in \cite{Fweb}.
\end{abstract}
\maketitle
%
%
%
%
\blfootnote{2010 \itshape Mathematics Subject Classification. \normalfont Primary 37F10; Secondary 30C65, 30D05.}
\section{Introduction}
Let $f$ be a transcendental entire function. The set of points $z \in \C$ for which the iterates $(f^n)_{n \in \N}$ forms a normal family in some neighbourhood of $z$ is called the \textit{Fatou set} $F(f)$. The complement of $F(f)$ is the \textit{Julia set} $J(f)$. For an introduction to the properties of these sets, see \cite{bergweiler93} and \cite{milnor06}.

There are large classes of \tef s for which the Julia set consists of an uncountable union of disjoint curves each of which joins a finite endpoint to infinity. This topological structure is known as a \textit{Cantor bouquet}; see \cite{brushing} for a definition and detailed study of this configuration. 

The study of the topological properties of the endpoints of these curves began with Mayer \cite{mayer90}, who considered the endpoints for the functions $f_a(z)=e^z+a$, $a \in \C$. He proved the surprising fact that if $a<-1$, then the set of all endpoints of $f_a$ is totally separated, but its union with infinity is a connected set. Here we say that $X \subset \C$ (or $\widehat{\C}$) is \textit{totally separated} if for any two points $a, b \in X$ there exists a relatively open and closed set $U \subset X$ such that $a \in U$ and $b \notin U$. As observed in \cite{LasseNada}, it is now known that Mayer's result holds for \textit{all} Cantor bouquets.

It is natural to ask whether analogous properties hold for subsets of the set of endpoints. It follows from recent results in \cite{LasseNada} and \cite{LasseVasso} that, for many values of $a \in \C$, the following dichotomy holds for the escaping and non-escaping endpoints of the function $f_a$ (precise definitions are given later in this introduction):
\begin{enumerate}[(I)]
\item the union of the set of \emph{escaping} endpoints with $\{\infty\}$ is connected;\label{lbl:escaping}
\item the union of the set of \emph{non-escaping} endpoints with $\{\infty\}$ is totally separated.\label{lbl:nonfastescaping}
\end{enumerate}

Our principal goal in this paper is to show that a strong version of this dichotomy holds for significantly larger classes of functions. We use a novel proof technique which combines ideas from \cite{Fweb} and \cite{LasseVasso} with estimates from \cite{DevHairs}, and a conjugacy result from \cite{Rigidity}.

In order to state these results precisely we require some definitions. First we define several sets of endpoints of interest. For a {\tef} $f$ with a Cantor bouquet Julia set, we write $\endpoints(f)$ for the set of endpoints. The \textit{escaping set} of $f$ is defined by $$I(f) := \{z \in \C: f^n(z)\to \infty\;\text{as}\;n \to \infty\}.$$ We then let $\escapingendpoints(f) := \endpoints(f) \cap I(f)$ denote the set of \textit{escaping} endpoints.  The endpoints that do not belong to $I(f)$ are called \textit{non-escaping} endpoints.

We are particularly interested in the set of \textit{meandering} endpoints, which we denote by $\meanderingendpoints(f)$. This set consists of the endpoints which do not belong to the fast escaping set $A(f)$. The set $A(f)$ is a subset of $I(f)$ which was introduced by Bergweiler and Hinkkanen in \cite{semiconjugation}, and is defined as follows. First, for $R>0$, we define the \textit{maximum modulus function} by $$M(R, f) := \max_{|z| = R} |f(z)|.$$ We then set 
\[
A(f):=\{z \in \C:\;\text{there exists}\;\ell \in \N\;\text{such that}\;|f^{n+\ell}(z)| \geq M^n(R,f),\;\text{for}\;n \in \N\},
\]
where $R>0$ is such that $M(r,f)>r$ for $r \geq R$. It is known \cite[Theorem 2.2]{Fast} that this definition is independent of the choice of $R$. Note that in this definition $M^n(R, f)$ denotes $n$ iterations of $M(R, f)$ with respect to the variable $R$.

We next define the classes of functions which are of interest. The set of finite singular values, denoted by $S(f)$, is the closure of the set of all critical and asymptotic values of $f$; equivalently, it is the smallest closed set $S$ with the property that $f : \C \setminus f^{-1}(S) \to \C \setminus S$ is a covering map. The well-studied \textit{Eremenko-Lyubich class} $\mathcal{B}$ consists of those transcendental entire functions $f$ such that $S(f)$ is bounded (see \cite{EandL}). A {\tef} in the class $\B$ is \emph{hyperbolic} if every singular value lies in the basin of an attracting periodic cycle; see \cite{classBornotclassB} for this and other, equivalent, definitions. A function $f$ is of \textit{disjoint type} if it is hyperbolic and $F(f)$ is connected. Clearly this implies that for functions of disjoint type the Fatou set consists of one component which is an immediate attracting basin. 

We are now able to state our results. In most cases, these are arranged as pairs of theorems: the first stating that, for some class of functions, $J(f)$ is a Cantor bouquet with the property that the set of meandering endpoints coincides with the set $J(f) \setminus A(f)$; the second stating a dichotomy for subsets of these endpoints, which implies the dichotomy \eqref{lbl:escaping}/\eqref{lbl:nonfastescaping}. We begin with the following, which can quickly be deduced from \cite[Theorem 1.5]{brushing}, \cite[Theorem 1.2]{DevHairs} and \cite[Theorem 4.7]{RRRS}. Recall that a transcendental entire function is of \emph{finite order} if
$$\limsup_{r \to \infty} \frac{\log \log M(r,f)}{\log r} < \infty.$$ 
\begin{thmx}
\label{theo:disjoint-type-context}
Let $f$ be a disjoint-type function that is of finite order or can be written as a finite composition of finite-order functions in the class $\mathcal{B}$.
Then $J(f)$ is a Cantor bouquet, and $\meanderingendpoints(f) = J(f) \setminus A(f)$.
\end{thmx}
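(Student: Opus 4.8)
\emph{$J(f)$ is a Cantor bouquet.} The statement has two essentially independent parts, and the plan is to treat each by specializing the cited results to the function classes in the hypothesis. For the first part I would appeal to \cite[Theorem 4.7]{RRRS}, whose hypotheses are met by any disjoint-type function that is of finite order or a finite composition of finite-order functions in $\B$, to obtain that every path-component of $J(f)$ is a hair: an arc consisting of a finite endpoint together with a ray tending to $\infty$, with all non-endpoint points escaping. It then remains to check that the resulting collection of hairs, organized by the fundamental-domain combinatorics available in the disjoint-type setting, satisfies the hypotheses of the characterization of Cantor bouquets in \cite[Theorem 1.5]{brushing}; that result identifies such a configuration, up to ambient homeomorphism of the plane, with a straight brush, which is precisely a Cantor bouquet. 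This also pins down $\endpoints(f)$ as the set of finite endpoints of these hairs.

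\emph{$\meanderingendpoints(f) = J(f) \setminus A(f)$.} By definition $\meanderingendpoints(f) = \endpoints(f) \setminus A(f)$, and since $\endpoints(f) \subseteq J(f)$ the inclusion $\meanderingendpoints(f) \subseteq J(f) \setminus A(f)$ is immediate. For the reverse inclusion it is enough to prove that every non-endpoint of $J(f)$ is fast escaping, that is,
\[
J(f) \setminus \endpoints(f) \subseteq A(f);
\]
granting this, any $z \in J(f) \setminus A(f)$ is forced to be an endpoint and hence lies in $\endpoints(f) \setminus A(f) = \meanderingendpoints(f)$. To prove the displayed containment I would fix a point $z$ lying on a hair but strictly beyond its endpoint and apply the escape-rate estimates of \cite[Theorem 1.2]{DevHairs}, which furnish an explicit lower bound for $|f^n(z)|$ in terms of the iterated maximum modulus. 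Comparing this bound against the defining inequality $|f^{n+\ell}(z)| \geq M^n(R,f)$ of the fast escaping set then places $z$ in $A(f)$.

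The first part is essentially bookkeeping --- matching hypotheses between \cite{RRRS} and \cite{brushing} --- and the trivial half of the second part uses only the definitions, so I expect the genuine work to be concentrated in verifying $J(f) \setminus \endpoints(f) \subseteq A(f)$. The delicate issue there is uniformity: the lower bound extracted from \cite{DevHairs} must dominate the $n$-fold iterate $M^n(R,f)$ with a single shift $\ell$ valid for all $n$ simultaneously, and in the composition case one must also confirm that the estimates pass correctly through a finite composition so that the fast-escaping conclusion is not lost. This is where I would expect to spend most of the effort.
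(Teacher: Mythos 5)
Your proposal is correct and takes essentially the same route as the paper, which gives no detailed proof of Theorem \ref{theo:disjoint-type-context} but states that it ``can quickly be deduced from'' exactly the three results you invoke: \cite[Theorem 4.7]{RRRS} and \cite[Theorem 1.5]{brushing} for the Cantor bouquet structure, and \cite[Theorem 1.2]{DevHairs} for the containment $J(f) \setminus \endpoints(f) \subseteq A(f)$. One small simplification: \cite[Theorem 1.2]{DevHairs} already asserts directly that every point of $J(f) \setminus A(f)$ is an endpoint, with hypotheses covering finite compositions of finite-order functions in $\mathcal{B}$, so the uniformity and composition issues you flag as the ``genuine work'' are absorbed into that citation and need not be re-derived from raw escape-rate estimates.
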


We then have the following.

\begin{theorem}
\label{theo:disjoint-type}
Let $f$ be a disjoint-type function that is of finite order or can be written as a finite composition of finite order functions in the class $\mathcal{B}$. Then $\escapingendpoints(f)~\cup~\{\infty\}$ is connected, but $\meanderingendpoints(f) \cup \{\infty\}$ is totally separated.
\end{theorem}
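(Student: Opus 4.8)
The plan is to prove the two halves of the dichotomy separately, using Theorem A to identify the meandering endpoints with $J(f)\setminus A(f)$ throughout. For the connectivity of $\escapingendpoints(f)\cup\{\infty\}$, I would first recall that for a disjoint-type function the Julia set is a Cantor bouquet, so by a suitable conjugacy (the rigidity result of \cite{Rigidity}) one may transport the combinatorial structure of the model Cantor bouquet, where each hair is indexed by an external address. The escaping endpoints are precisely those with unbounded (in an appropriate sense) address, and the standard argument---going back to Mayer's proof and formalised for general Cantor bouquets in \cite{LasseNada}---shows that one can connect any escaping endpoint to infinity along (a limit of) pieces of hairs. Concretely, I would produce, for any escaping endpoint $x$, an arc or nested sequence of continua in $\escapingendpoints(f)\cup\{\infty\}$ joining $x$ to $\infty$; since every such point connects to the single point $\infty$, the whole set is connected.

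\emph{The main work, and the main obstacle, lies in the second half:} showing $\meanderingendpoints(f)\cup\{\infty\}$ is totally separated. The strategy is to produce, for any two distinct points $a,b$ of this set, a relatively clopen separating set. The key tool should be the fast escaping set $A(f)$ and, in particular, its level sets. The crucial geometric input is the existence of a sequence of closed curves $\gamma_n$ surrounding the origin, going to infinity, whose preimages and the maximum-modulus estimates from \cite{DevHairs} force a separation: a point lies in $A(f)$ exactly when its orbit eventually stays outside all the $\gamma_n$ in the fast sense. Since $\meanderingendpoints(f)=J(f)\setminus A(f)$ consists of endpoints that fail to fast-escape, for two such endpoints $a,b$ one can find a level curve $\gamma_n$ whose associated clopen piece of the bouquet contains $a$ but not $b$. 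The technical heart is to verify that these separating sets are relatively open and closed in $\meanderingendpoints(f)\cup\{\infty\}$, and that $\infty$ can be placed on the correct side; here the finite-order (or finite-composition) hypothesis enters through the estimates of \cite{DevHairs} controlling the geometry of hairs and the location of fast-escaping points.

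To assemble the separation rigorously I would proceed as follows. First, using the $\log$--$\log$ growth afforded by finite order together with \cite{DevHairs}, I would establish that the fast escaping set meets each hair in a connected piece containing the hair's ``tail'' near infinity, so that on each hair the non-fast-escaping part is a bounded initial segment ending at the endpoint. Next, for distinct meandering endpoints $a,b$ lying on hairs with distinct addresses, I would separate their addresses combinatorially at some finite symbol and pull this separation back through the conjugacy to a clopen partition of the endpoint set; for $a,b$ on the \emph{same} hair the situation cannot arise, since a hair has a single endpoint. The delicate point is uniformity: the separating sets must be simultaneously open and closed, which requires that the boundary between the two pieces avoid $\meanderingendpoints(f)$ entirely---this is exactly where one uses that boundary points would have to fast-escape and hence lie in $A(f)$, not in $\meanderingendpoints(f)$. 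Finally I would check that $\{\infty\}$ can be separated from any meandering endpoint by the same mechanism, completing total separation. I expect the identification of the precise clopen sets, and the verification that their relative boundaries lie in $A(f)$, to be the step demanding the most care, since it is where the dynamical estimates and the topological model must be reconciled.
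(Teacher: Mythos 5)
Your overall skeleton matches the paper's: identify $\meanderingendpoints(f)$ with $J(f)\setminus A(f)$ via Theorem~\ref{theo:disjoint-type-context}, use the Cantor bouquet structure to separate pairs of endpoints, separate each meandering endpoint from $\infty$ by sets lying in the complement $A(f)\cup F(f)$, and combine (the paper formalises the combination step with Lemma~\ref{lemm:LasseVasso}). But there is a genuine gap at the crux. Your separation from infinity rests on ``the crucial geometric input\dots the existence of a sequence of closed curves $\gamma_n$ surrounding the origin, going to infinity'' whose relevant pieces lie in $A(f)\cup F(f)$, which you present as available from the estimates of \cite{DevHairs}. No such result is in \cite{DevHairs}, and it cannot be had by soft arguments: for a disjoint-type function $A(f)\subset J(f)$, and a Cantor bouquet contains no closed curves, so any such curve must weave through $F(f)$ as well; establishing that this can be done is precisely the new technical content of the paper. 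The paper proves it (for a normalised logarithmic transform $G$ of a rescaling $f/L$) as Lemma~\ref{lemma:Gisnice}: one defines the closed set $B$ of points whose orbits stay in the tracts and grow at least like an iterated exponential, shows every complementary component of $B$ is bounded by an inductive construction of curves with geometrically spreading real parts (using Lemma~\ref{growlemma}, whose hypotheses -- bounded slope and wiggling -- are what finite order buys via \cite{RRRS}), and rules out an unbounded component by applying Lemma~\ref{RSlemma} to produce a point of $B$ inside it; then $B\subset A(G)\cup J(G)^c$, and the boundary of the bounded component of $B^c$ containing a meandering endpoint is the separating continuum. Note that what you assume is essentially equivalent to Theorem~\ref{theo:disjoint-type-spw} ($A(f)\cup F(f)$ is a spider's web), which the paper \emph{deduces from} the separation result, not the other way around; so your plan is circular at exactly the point where the work lies. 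You also never address how to pass between $f$ and the normalised model where the \cite{DevHairs} lemmas apply: the paper needs the quasiconformal equivalence machinery of Proposition~\ref{prop:thereisaconjugacy} and Theorem~\ref{theo:implicationsofaconjugacy} (in particular, that such a conjugacy carries $A(f)$ to $A(g)$, which is not automatic).

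A secondary problem is your sketch of the first half. You propose to join an escaping endpoint $x$ to $\infty$ by ``an arc or nested sequence of continua in $\escapingendpoints(f)\cup\{\infty\}$.'' This cannot work: $\escapingendpoints(f)$ is totally separated, so an arc in the union would, after deleting the endpoint $\infty$, leave a nondegenerate connected subset of a totally separated set. The connectivity of $\escapingendpoints(f)\cup\{\infty\}$ is an explosion-point phenomenon, not a path-connectedness statement, and the correct move -- which the paper makes -- is simply to cite \cite[Theorem 1.9]{LasseNada} rather than reprove it.
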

Note that the first conclusion is part of \cite[Theorem 1.9]{LasseNada}, and is included here for emphasis. Note also that every non-escaping endpoint belongs to $\meanderingendpoints(f)$. Hence \eqref{lbl:nonfastescaping} is an immediate consequence of Theorem~\ref{theo:disjoint-type}. This observation applies to many of our results (and also to the results of \cite{LasseVasso}). \\


We next consider functions that may be of infinite order, whose \textit{tracts} satisfy various simple geometric conditions; precise definitions of these conditions are given in Section \ref{log transform_sec}. Functions that satisfy these conditions were studied in \cite{DevHairs}. The setting for our result is given by the following.  
\begin{thmx}
\label{theo:disjoint-type-inf-order-context}
Suppose that $f$ is of disjoint type, and the tracts of a transform $F$ of $f$ have bounded slope and bounded wiggling. Then $J(f)$ is a Cantor bouquet, each component of which, apart possibly from the finite endpoint, lies in $I(f)$. If, in addition, $f$ has one tract, and the tracts of $F$ have bounded gulfs, then $\meanderingendpoints(f)~=~J(f)~\setminus~A(f)$.
\end{thmx}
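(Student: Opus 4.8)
The plan is to treat the two assertions separately and to reduce the second—which is the substantive one—to a single claim about the \emph{rate} of escape along the hairs. For the first assertion I would appeal directly to the structure theory of \cite{DevHairs}. Under the standing hypotheses that $f$ is of disjoint type and that the tracts of a transform $F$ have bounded slope and bounded wiggling, the results there, together with \cite{RRRS}, show that $J(f)$ is organised as a Cantor bouquet: an uncountable union of pairwise disjoint arcs, each tending to infinity, with a single finite endpoint. The accompanying estimates in \cite{DevHairs} show moreover that every point of such an arc other than its endpoint lies in $I(f)$; this is exactly the statement that each component of $J(f)$, apart possibly from its finite endpoint, lies in $I(f)$.

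For the second assertion I would first record the trivial inclusion $\meanderingendpoints(f) = \endpoints(f) \setminus A(f) \subseteq J(f) \setminus A(f)$, since $\endpoints(f) \subseteq J(f)$. The reverse inclusion is the heart of the matter. Writing $T$ for the set of non-endpoint hair points, the first assertion gives the decomposition $J(f) = \endpoints(f) \cup T$, whence
\[
J(f) \setminus A(f) = \bigl(\endpoints(f) \setminus A(f)\bigr) \cup \bigl(T \setminus A(f)\bigr).
\]
Thus it suffices to prove the claim that $T \subseteq A(f)$, i.e.\ that every point of a hair other than its endpoint is \emph{fast} escaping. Granting this, $T \setminus A(f) = \emptyset$ and the displayed set collapses to $\meanderingendpoints(f)$, as required. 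This reduction shows precisely where the additional hypotheses must be used: the first assertion already places $T$ in $I(f)$, and the extra strength needed is to upgrade escape to fast escape.

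To establish the claim I would work in the logarithmic coordinates, in which the single tract maps as a conformal isomorphism onto a right half-plane and the dynamics of $F$ are expanding. Fix a hair point $z$ with $|z| = r$ large. The hypothesis that $f$ has one tract forces the global maximum modulus $M(r,f)$ to be attained, up to a controlled comparison, within that tract; the bounded-gulfs hypothesis guarantees that the tract is fat enough that a point following a hair reaches the deep part of the tract, where $|f|$ is comparable to $M(r,f)$. Combining this with the lower bounds for $|f|$ along hairs from \cite{DevHairs}, I expect to obtain an inequality of the form $\log |f(z)| \geq \log M(|z|,f) - C$, valid for all hair points with $|z|$ sufficiently large and with $C$ independent of the hair. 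The expansion of $F$ ensures that, after some finite number $\ell$ of iterates, the orbit of $z$ lies deep enough in the tract for this estimate to apply at every subsequent step; iterating it then yields $|f^{n+\ell}(z)| \geq M^n(R,f)$ for all $n$, which is exactly the defining condition for $z \in A(f)$ recorded in the introduction (cf.\ \cite{Fast}).

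The main obstacle is the quantitative comparison in the previous paragraph. A merely \emph{multiplicative} bound $\log|f(z)| \geq c\,\log M(|z|,f)$ with $c < 1$ would not suffice, since such an estimate is not preserved under the iteration that defines $A(f)$; what is needed is the near-maximal, additive-error form $\log|f(z)| \geq \log M(|z|,f) - C$, and it is precisely here that both extra hypotheses are essential. The single-tract hypothesis prevents $M(r,f)$ from being realised in a tract \emph{other} than the one through which a given hair escapes, while the bounded-gulfs hypothesis rules out the deep indentations that would otherwise allow a hair to escape through a thin part of the tract along which $|f|$ is far smaller than $M(r,f)$. Carrying out this comparison uniformly over all hairs, with an error absorbed under iteration, is the crux; once it is in hand, the passage from $I(f)$ to $A(f)$, and hence the identity $\meanderingendpoints(f) = J(f) \setminus A(f)$, is routine.
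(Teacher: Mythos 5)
Your first assertion, and your reduction of the second to the claim that every non-endpoint point of a hair lies in $A(f)$, are both correct, and they follow the same route as the paper: the Cantor bouquet structure and the escape of non-endpoints come from \cite[Corollary 6.3]{brushing} and \cite[Theorem 4.7]{RRRS}, applied to a \emph{normalised} logarithmic transform $G$ of the rescaled map $g := f/L$, and are then transported to $f$ via Proposition~\ref{prop:thereisaconjugacy} and Theorem~\ref{theo:implicationsofaconjugacy}. (You skip this rescaling-and-transfer step; it is needed because the results you invoke are stated for functions in $\Blog^n$ rather than for $f$ itself, but that omission is repairable.) The genuine gap is the key claim itself: you do not prove that non-endpoint hair points are fast escaping; you sketch a strategy and expressly flag its central estimate as ``the crux'' and ``the main obstacle''. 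That claim is exactly what Remark 3 after Theorem 5.2 of \cite{DevHairs} supplies under the one-tract, bounded slope/wiggling/gulfs hypotheses --- every $z \in I(G)$ can be joined to infinity by a curve $\gamma \subset I(G)$ with $\gamma \setminus \{z\} \subset A(G)$ --- and the paper's proof consists of citing it and transferring the conclusion to $f$. As written, your proposal establishes the easy inclusion $\meanderingendpoints(f) \subset J(f) \setminus A(f)$ and leaves the substantive one open.

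Moreover, the route you sketch would not work as described. The pointwise estimate $\log|f(z)| \geq \log M(|z|,f) - C$, ``valid for all hair points with $|z|$ sufficiently large'', is false under the stated hypotheses. Already for a disjoint-type exponential map (one tract up to $2\pi i$ translates, and tame enough to satisfy all the geometric conditions) there are hairs whose endpoints have arbitrarily large modulus while their images under $f$ have bounded modulus: in terms of external addresses, consider hairs with address $(0,N,0,0,\ldots)$ for $N$ large, whose endpoints have modulus comparable to $2\pi N$ but map to endpoints of bounded modulus. Non-endpoint points just beyond such endpoints then have $\log|f(z)|$ bounded while $\log M(|z|,f)$ is comparable to $|z|$, so your inequality fails badly. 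Restricting the estimate to points whose whole forward orbit stays ``deep'' in the tract does not rescue it, since that restriction is essentially the fast-escaping condition you are trying to prove --- the argument becomes circular. Your accompanying claim that no weaker bound can be iterated is also mistaken: the proof in \cite{DevHairs}, reproduced in spirit in the paper's Lemma~\ref{inf-sep_lem}, runs on the comparison $\operatorname{Re} G(w) > \frac{1}{\epsilon'} M(\epsilon' \operatorname{Re} w, G)$, i.e.\ domination of the maximum modulus at a \emph{proportionally smaller} radius, extracted from the level-curve machinery of Lemmas~\ref{inflemma} and~\ref{inflemma2}. That bound does iterate, and it suffices for membership in $A(G)$ because the definition of the fast escaping set is independent of the choice of the starting value $R$.
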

Note that, unlike Theorem~\ref{theo:disjoint-type-context}, Theorem~\ref{theo:disjoint-type-inf-order-context} is not quite an immediate consequence of existing results, and so we give a brief proof of this in Section~\ref{infinite_sec}. Our result in the setting of Theorem~\ref{theo:disjoint-type-inf-order-context} is the following.
\begin{theorem}
\label{theo:disjoint-type-inf-order}
Suppose that $f$ is of disjoint type, and the tracts of a transform of $f$ have bounded slope and bounded wiggling. Then the dichotomy \eqref{lbl:escaping}/\eqref{lbl:nonfastescaping} holds. If, in addition, $f$ has one tract, and the tracts of a transform of $f$ have bounded gulfs, then $\escapingendpoints(f) \cup \{\infty\}$ is connected but $\meanderingendpoints(f) \cup \{\infty\}$ is totally separated.
\end{theorem}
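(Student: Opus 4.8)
The plan is to treat the two conclusions separately, noting first that the connectedness of $\escapingendpoints(f) \cup \{\infty\}$ requires no new argument: since Theorem~\ref{theo:disjoint-type-inf-order-context} guarantees that $J(f)$ is a Cantor bouquet, this is again covered by \cite[Theorem 1.9]{LasseNada}, precisely as in Theorem~\ref{theo:disjoint-type}. Thus in both halves of the statement the work lies entirely in proving total separation, and in both halves I would work in the logarithmic coordinates of a transform $F$ of $f$, where the bounded slope and bounded wiggling hypotheses give, via the estimates of \cite{DevHairs}, quantitative control on the shape and escape rate of the hairs; where convenient I would use the conjugacy of \cite{Rigidity} to replace $f$ by a convenient disjoint-type model with the same tracts.

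The engine of every separation argument is the following elementary principle: if $X \subset \widehat{\C}$ and $a, b \in X$ can be placed in distinct components of $\widehat{\C} \setminus S$ for some closed set $S$ with $S \cap X = \emptyset$, then $X \cap (\text{component of } a)$ is relatively clopen in $X$ and witnesses the separation of $a$ and $b$. Hence to show that $\nonescapingendpoints(f) \cup \{\infty\}$ (respectively $\meanderingendpoints(f) \cup \{\infty\}$) is totally separated it suffices, for each pair of distinct points of the set, to produce a separating \emph{closed curve} that meets neither $\infty$ nor any non-escaping (respectively meandering) endpoint. The decisive observation is that such a curve is free to pass through \emph{any} escaping point, and in particular through the whole interior of every hair, since by Theorem~\ref{theo:disjoint-type-inf-order-context} these points lie in $I(f)$ and so are disjoint from the endpoint sets in question.

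For the first conclusion I would therefore separate two non-escaping endpoints $x$ and $y$ by threading a Jordan curve between the hairs carrying them. Using the bounded-slope and bounded-wiggling estimates, the hairs near $x$ and $y$ are nearly straight and escape uniformly once one moves off the endpoint, so one can route the curve so that every time it crosses a hair it does so at an interior (hence escaping) point, while in the complementary gaps it runs through $F(f)$; the same estimates let one choose the crossing heights so as to avoid the endpoints that may accumulate at $x$. For the second, stronger conclusion the separating curves are instead supplied by a spider's web: the extra hypotheses (one tract and bounded gulfs) are exactly what is needed, following the method of \cite{Fweb}, to produce a sequence of closed curves $G_n \to \infty$ lying in $A(f)$. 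Since $G_n \subset A(f)$ these loops are automatically disjoint from $\meanderingendpoints(f) = J(f) \setminus A(f)$ (using the second part of Theorem~\ref{theo:disjoint-type-inf-order-context}), and they give the ``radial'' separation from $\infty$; to separate two meandering endpoints lying inside the same loop I would join consecutive curves $G_n$ and $G_{n+1}$ by arcs running through the gaps between hairs, again placed by the \cite{DevHairs} estimates so as to avoid meandering endpoints, thereby closing up a separating loop. This combination of the spider's-web construction of \cite{Fweb} with the endpoint-separation scheme of \cite{LasseVasso} is the novelty the introduction advertises.

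I expect the main obstacle to be precisely the routing of these separating curves in the presence of endpoints that accumulate, in the plane, on the endpoint one is trying to enclose: one must guarantee that the curve can always be pushed into the escaping set (or into $A(f)$, for the meandering case) and through the Fatou gaps without being forced across a forbidden endpoint. This is where the quantitative hair estimates of \cite{DevHairs} — controlling how far up a hair one must go before it is escaping, and how little it wiggles — do the real work, and where the bounded-gulfs hypothesis is indispensable for the spider's web, since it is what prevents the complementary gaps from pinching the curves against the endpoints.
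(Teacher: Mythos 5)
Your outline gets the easy half right (connectedness of $\escapingendpoints(f)\cup\{\infty\}$ is indeed just quoted from \cite[Theorem 1.9]{LasseNada}) and correctly identifies the abstract separation principle, but the actual mechanism for producing the separating sets is missing, and this mechanism is the entire content of the paper's proof. The paper does not thread Jordan curves between hairs. Instead it works with a normalised logarithmic transform $G$ and defines a closed ``barrier''
\[
B := \bigcap_{k\geq 0} \Bigl\{ z \in \C : \operatorname{Re} G^k(z) \geq \tfrac{1}{\epsilon'}\, M^k(\epsilon' R', G) \text{ or } G^k(z) \notin \mathcal{T} \Bigr\},
\]
proves by contradiction that every component of $B^c$ is bounded (an unbounded component would allow an inductive construction of curves $\gamma_k$ with controlled real parts, via the bounded-slope/bounded-wiggling growth estimate of Lemma~\ref{growlemma} and, for the second part, Lemmas~\ref{inflemma} and~\ref{inflemma2} using bounded gulfs; Lemma~\ref{RSlemma} then yields a point of $\gamma_0\cap B$, a contradiction), and shows $B\cap J(G)$ consists of fast-escaping points. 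The separating continuum around a forbidden endpoint $z$ is then simply $\partial X$, where $X$ is the bounded component of $B^c$ containing $z$. This construction is exactly what resolves the obstacle you flag at the end but do not overcome: one cannot ``route'' a curve by hand through a set of forbidden endpoints that is dense in the bouquet, whereas the boundary of a complementary component of $B$ avoids them automatically because $B\cap J(G)$ escapes fast. Note also that the two hypotheses split for a quantitative reason you do not address: without bounded gulfs the barrier only forces iterated-exponential escape, i.e.\ membership in $X(f)$ (Lemma~\ref{lemm:otherinf}), which suffices for the non-escaping dichotomy since $X(f)\subset I(f)$, but not for $\meanderingendpoints(f)$; bounded gulfs (plus one tract) are what upgrade the estimate to genuine maximum-modulus growth, i.e.\ membership in $A(f)$.

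Two further concrete problems. First, your claim that one can produce ``closed curves $G_n\to\infty$ lying in $A(f)$'' is false: for a disjoint-type $f\in\B$ we have $A(f)\subset I(f)\subset J(f)$, and $J(f)$ is a Cantor bouquet, which contains no closed curves; the separating continua necessarily lie in $A(f)\cup F(f)$, so the assertion ``$G_n\subset A(f)$ hence disjoint from $\meanderingendpoints(f)$'' needs the Fatou part made explicit (as in Lemma~\ref{inf-sep_lem} and Theorem~\ref{theo:spw}). Second, you make the problem harder than necessary by trying to separate pairs of finite endpoints directly: since $J(f)$ is a Cantor bouquet, $\endpoints(f)$ is already totally separated, so it suffices to separate each forbidden endpoint from $\infty$ and then invoke Lemma~\ref{lemm:LasseVasso}; this is the decomposition the paper uses, and dropping it forces you into exactly the hair-by-hair routing that cannot be justified by the estimates you cite.
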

Note that the hypotheses of the theorem imply that $f$ satisfies a so-called \textit{head-start} condition. In both parts, the  conclusion about escaping endpoints follows immediately from \cite[Theorem 1.9]{LasseNada} and, once again, is included for emphasis. \\


The first result on the topology of the set of non-escaping endpoints was given in \cite{Fweb}, and concerns \emph{Fatou's function} $f(z) := z+1+e^{-z}$, which, in fact, does not lie in the class $\mathcal{B}$. The dichotomy \eqref{lbl:escaping}/\eqref{lbl:nonfastescaping}, for this function, was studied further in \cite{LasseVasso}. Motivated by this result we study a large class of functions which occur as \textit{lifts} of a disjoint-type function; this class includes Fatou's function. 
The following can be deduced from \cite[Theorem 2]{semiconjugation} and a simple topological argument, together with \cite[Theorem 4.1]{DevHairs}; the proof is omitted.
\begin{thmx}
\label{theo:Fatou-functions-context}
Let $f$ be a transcendental entire function such that $\pi \circ f = g \circ \pi$, where $\pi(z) := \exp(az)$, for some $a \neq 0,$ and $g$ satisfies the hypotheses of Theorem \ref{theo:disjoint-type-context}. If $0$ is an attracting fixed point for $g$, then $J(f)$ is a Cantor bouquet, and $\meanderingendpoints(f) = J(f) \setminus A(f)$.
\end{thmx}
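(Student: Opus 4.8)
The plan is to transfer the Cantor bouquet structure and the meandering-endpoint equality from $g$ to $f$ through the semiconjugacy $\pi \circ f = g \circ \pi$, exploiting that $\pi(z) = \exp(az)$ is a covering map of $\C$ onto $\C \setminus \{0\}$. First I would apply Theorem~\ref{theo:disjoint-type-context} to $g$, which satisfies its hypotheses, to record that $J(g)$ is a Cantor bouquet and that $\meanderingendpoints(g) = J(g) \setminus A(g)$. The hypothesis that $0$ is an attracting fixed point of $g$ is exactly what places the value omitted by $\pi$ inside $F(g)$; this is the key compatibility that makes the semiconjugacy behave well. Using \cite[Theorem~2]{semiconjugation} I would then obtain the identities $F(f) = \pi^{-1}(F(g))$, $J(f) = \pi^{-1}(J(g))$ and, crucially, $A(f) = \pi^{-1}(A(g))$, so that the escaping and fast-escaping structure of $f$ is governed entirely by that of $g$.

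Next I would establish the Cantor bouquet structure of $J(f)$ by lifting hairs. A hair of $J(g)$ is an arc from a finite endpoint $e_g \neq 0$ to $\infty$; since $0 \in F(g)$, its full preimage under $\pi$ consists of countably many pairwise disjoint arcs, each obtained by continuous lifting and each a translate by an integer multiple of $2\pi i/a$. Along any such lift $\tilde\gamma$ one has $|\pi(\tilde\gamma(t))| \to \infty$, and since $|\pi(z)| = \exp(\operatorname{Re}(az))$ this forces $\operatorname{Re}(a\tilde\gamma(t)) \to +\infty$, so each lift is an arc running from a finite endpoint in $\pi^{-1}(e_g)$ to infinity. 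As $\pi$ is a local homeomorphism these arcs do not merge and inherit the hair structure; invoking \cite[Theorem~4.1]{DevHairs} I would confirm that the resulting uncountable family of disjoint arcs is a Cantor bouquet. The same local-homeomorphism argument shows that $\pi$ carries endpoints of $f$-hairs to endpoints of $g$-hairs and interior hair points to interior hair points; in particular $\endpoints(f) = \pi^{-1}(\endpoints(g))$.

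The meandering-endpoint equality then follows from a short set-theoretic argument. Since $\endpoints(f) \subseteq J(f)$, the inclusion $\meanderingendpoints(f) = \endpoints(f) \setminus A(f) \subseteq J(f) \setminus A(f)$ is immediate, so it suffices to prove the reverse. Given $z \in J(f) \setminus A(f)$, the identities above give $\pi(z) \in J(g)$ and $\pi(z) \notin A(g)$, hence $\pi(z) \in J(g) \setminus A(g) = \meanderingendpoints(g) \subseteq \endpoints(g)$. Because $\pi$ preserves the endpoint/interior dichotomy of hairs, $z \in \endpoints(f)$, and therefore $z \in \endpoints(f) \setminus A(f) = \meanderingendpoints(f)$. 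This yields $J(f) \setminus A(f) \subseteq \meanderingendpoints(f)$ and completes the equality.

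The main obstacle I expect is not the set-theoretic bookkeeping but the careful verification that $\pi$ lifts the Cantor bouquet of $g$ to a genuine Cantor bouquet of $f$: one must check that each lifted arc is a bona fide hair escaping to infinity rather than accumulating in a complicated way, that distinct lifts remain disjoint and globally arranged as a ``straight brush'', and that the correspondence of endpoints is exact. The covering property of $\pi$, together with the fact that the omitted value $0$ lies in $F(g)$, is what controls this, and it is precisely here that \cite[Theorem~4.1]{DevHairs} and the ``simple topological argument'' do the real work; the identification $A(f) = \pi^{-1}(A(g))$ from \cite[Theorem~2]{semiconjugation} then makes the transfer of the meandering-endpoint description automatic.
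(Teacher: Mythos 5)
The paper itself omits the proof of this theorem, indicating only its ingredients (\cite[Theorem 2]{semiconjugation}, a ``simple topological argument'', and \cite[Theorem 4.1]{DevHairs}), and your skeleton does follow that route. However, there is a genuine flaw in your key transfer step: the claimed identity $A(f) = \pi^{-1}(A(g))$, which you attribute to \cite[Theorem 2]{semiconjugation}. That theorem concerns Julia sets only; it is what the paper uses in Section~\ref{not B_sec} to get $J(g) = \pi(J(f))$. The statement about fast-escaping sets in \cite{semiconjugation} is Theorem~5, and it yields only the inclusion $A(g) \subset \pi(A(f))$ --- which is exactly how the paper cites it in Section~\ref{not B_sec}, where, consistently, only the one-sided containment $\pi(\meanderingendpoints(f)) \subset \meanderingendpoints(g)$ is deduced, never an equality. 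The reverse inclusion $\pi(A(f)) \subset A(g)$ is not justified by anything you cite and is genuinely delicate: as the remark following Theorem~\ref{theo:Fatou-functions} points out, even $I(g) = \pi(I(f))$ can fail, because orbits of $f$ can escape in the direction of periodicity of $\pi$ while their projections under $\pi$ do not escape. So the ``crucial'' identity is an overclaim.

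The gap is repairable, because your set-theoretic argument actually uses only one direction, namely $\pi(z) \in A(g) \Rightarrow z \in A(f)$, i.e.\ $\pi^{-1}(A(g)) \subset A(f)$. But to obtain even this from \cite[Theorem 5]{semiconjugation} you need a step you never mention: $A(g) \subset \pi(A(f))$ gives, for each $w \in A(g)$, only \emph{some} preimage lying in $A(f)$; to conclude that \emph{every} preimage does, you must show $A(f)$ is invariant under the deck transformation $z \mapsto z + 2\pi i/a$. This is true but needs a short argument, since $f(z + 2\pi i/a) = f(z) + 2\pi i k/a$ with an integer $k$ that need not equal $1$, so the translation is not a conjugacy of $f$ and one has to compare the drift $|k|^n$ against the tower growth of $M^n(R,f)$. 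A second miscasting: you invoke \cite[Theorem 4.1]{DevHairs} to certify that the lifted family of arcs is a Cantor bouquet, but that theorem is the hair/fast-escaping statement for precisely such lifts --- it gives $J(f)\setminus A(f) \subset \endpoints(f)$ directly (which would make your whole transfer of the $A$-set relation unnecessary); the straight-brush topology of $\pi^{-1}(J(g))$ is instead the content of the ``simple topological argument'', which you sketch (lifting, disjointness, correspondence of endpoints) but never verify against the definition of a Cantor bouquet, despite correctly identifying it as the main obstacle. With the citation corrected to Theorem~5, the equality weakened to the inclusion you actually use, and the $2\pi i/a$-invariance of $A(f)$ supplied (or all three replaced by a direct appeal to \cite[Theorem 4.1]{DevHairs} for the endpoint statement), your argument becomes essentially the paper's intended one.
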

We are able to extend our result on the meandering endpoints to this class of functions outside the class $\mathcal{B}.$ In particular: 
\begin{theorem}
\label{theo:Fatou-functions}
Let $f$ be a transcendental entire function such that $\pi \circ f = g \circ \pi$, where $\pi(z) := \exp(az)$, for some $a \neq 0,$ and $g$ satisfies the hypotheses of Theorem \ref{theo:disjoint-type-context}. If $0$ is an attracting fixed point for $g$, then $\meanderingendpoints(f) \cup \{\infty\}$ is totally separated.
\end{theorem}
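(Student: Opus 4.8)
The plan is to transfer the total separation of $\meanderingendpoints(g)\cup\{\infty\}$ supplied by Theorem~\ref{theo:disjoint-type} up to $f$ along the semiconjugacy $\pi\circ f=g\circ\pi$, and to handle the single point $\infty$ by a trapping argument. I would begin by recording the structural consequences of the semiconjugacy. Writing $\omega:=2\pi i/a$, the map $\pi(z)=\exp(az)$ is $\omega$-periodic, so $\pi\circ T=\pi$ for the translation $T\colon z\mapsto z+\omega$. Iterating gives $\pi\circ f^n=g^n\circ\pi$, and from \cite[Theorem~2]{semiconjugation} together with Theorems~\ref{theo:disjoint-type-context} and~\ref{theo:Fatou-functions-context} I would check that $J(f)=\pi^{-1}(J(g))$ and that $z\in A(f)$ precisely when $\pi(z)\in A(g)$. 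Since $\meanderingendpoints(f)=J(f)\setminus A(f)$ and $\meanderingendpoints(g)=J(g)\setminus A(g)$, this yields the identity $\meanderingendpoints(f)=\pi^{-1}(\meanderingendpoints(g))$; in particular $\meanderingendpoints(f)$ is $T$-invariant. Moreover, as $0$ is an attracting fixed point of $g$ its basin is an open subset of $F(g)$, whose $\pi$-preimage lies in $F(f)$; hence $\meanderingendpoints(g)$ is bounded away from $0$ and $\meanderingendpoints(f)$ is contained in a half-plane $\{\,z:\operatorname{Re}(az)\ge -C\,\}$.

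The heart of the argument is a spider's web $W$ contained in the fast escaping set: a closed, $T$-invariant set $W\subset A(f)$, necessarily disjoint from $\meanderingendpoints(f)$, each of whose complementary components is bounded. Granting such a $W$, the separation is essentially formal. Each complementary component $U$ is bounded, hence not $T$-invariant; thus if $z$ and $z+k\omega$ with $k\neq 0$ both lay in $U$ we would have $U=U+k\omega$ and $U$ unbounded, a contradiction. Hence two distinct points of one $\pi$-fibre never share a complementary component, and $\pi$ is injective on the meandering endpoints inside any single $U$. Because $\partial U\subset W$ is disjoint from $\meanderingendpoints(f)$ and $U$ is bounded, the set $(\meanderingendpoints(f)\cup\{\infty\})\cap U$ is relatively clopen in $\meanderingendpoints(f)\cup\{\infty\}$ and omits $\infty$; this already separates $\infty$ from every finite endpoint, and separates endpoints lying in different components. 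For two endpoints $\alpha\neq\beta$ in the same component $U$ we have $\pi(\alpha)\neq\pi(\beta)$, so Theorem~\ref{theo:disjoint-type} furnishes a relatively clopen $V\subset\meanderingendpoints(g)$ (which we may take to omit $\infty$) with $\pi(\alpha)\in V$ and $\pi(\beta)\notin V$; then $\pi^{-1}(V)\cap U\cap(\meanderingendpoints(f)\cup\{\infty\})$ is a bounded, relatively clopen set separating $\alpha$ from $\beta$. As every non-escaping endpoint is meandering, total separation of $\meanderingendpoints(f)\cup\{\infty\}$ follows, giving also \eqref{lbl:nonfastescaping}.

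The main obstacle is thus the construction of $W$, equivalently the assertion that every meandering endpoint of $f$ is trapped in a bounded complementary component of the fast escaping set. This resists a direct pull-back from $g$: since $\pi$ has an essential singularity at $\infty$, the point $\infty$ for $f$ is not the $\pi$-image of $\infty$ for $g$, and circles about $0$ in the target of $\pi$ lift only to vertical lines, which enclose nothing. This is precisely where I would combine the mechanism of \cite{Fweb} with the hair estimates of \cite{DevHairs} and the conjugacy result of \cite{Rigidity}: passing by rigidity to a model map on the Julia set, the estimates of \cite{DevHairs} give quantitative control of the hairs and of $A(f)$, from which --- as in the analysis of Fatou's function in \cite{Fweb} and \cite{LasseVasso} --- one extracts a $T$-invariant spider's web inside $A(f)$ whose bounded complementary components capture the non-fast-escaping points. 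I expect this geometric step, rather than the formal separation above, to carry the real weight of the proof.
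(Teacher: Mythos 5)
Your formal separation argument (second paragraph) is sound, and the fibre observation --- that a bounded complementary component of a $T$-invariant set cannot contain two points $z$ and $z+k\omega$ of one $\pi$-fibre, since otherwise that component would coincide with its own translate and be unbounded --- is a nice alternative to the paper's appeal to Lemma~\ref{lemm:LasseVasso}. But the object on which everything rests cannot exist: you ask for a closed, $T$-invariant set $W\subset A(f)$ all of whose complementary components are bounded. For the functions of Theorem~\ref{theo:Fatou-functions}, $F(f)$ is a single unbounded, completely invariant Baker domain (see the remark after Theorem 4.1 of \cite{DevHairs}), and $F(f)\cap A(f)=\emptyset$: for $z\in F(f)$ we have $\pi(f^n(z))=g^n(\pi(z))\to 0$, and since $g(w)\sim\lambda w^d$ near $0$ the lifted map satisfies $f(\zeta)=d\zeta+c+o(1)$ as $\operatorname{Re}(a\zeta)\to-\infty$ for suitable constants, so $|f^n(z)|=O(d^n)$ grows at most exponentially (linearly for Fatou's function itself), whereas every point of $A(f)$ satisfies $|f^{n+\ell}(z)|\geq M^n(R,f)\geq R^{2^n}$. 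Hence $A(f)\subset J(f)$, so the unbounded connected set $F(f)$ is disjoint from $W$ and would have to lie in a single complementary component of $W$, which is then unbounded --- a contradiction. (Equivalently, $J(f)$ is a Cantor bouquet, a disjoint union of arcs, and no subset of it can contain the boundary of a bounded domain.) Any set separating a meandering endpoint from $\infty$ must pass through the Baker domain; this is why the paper works with continua in $A(f)\cup F(f)$, never in $A(f)$ alone.

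Second, even after replacing $A(f)$ by $A(f)\cup F(f)$ (with which your formal argument does go through), the step you defer to ``combining the mechanism of \cite{Fweb} with the estimates of \cite{DevHairs}'' is precisely the substance of the paper's proof, and it is not a routine combination. The paper argues pointwise: for $z\in\meanderingendpoints(f)$ it first shows $w=\pi(z)\in\meanderingendpoints(g)$, using only $J(g)=\pi(J(f))$ and $A(g)\subset\pi(A(f))$ from \cite{semiconjugation} --- note that your stronger claim that $z\in A(f)$ if and only if $\pi(z)\in A(g)$ is not what is cited there and would itself require proof (e.g.\ $\omega$-translation invariance of $A(f)$ is not immediate when $f(z+\omega)=f(z)+j\omega$ with $|j|\geq 2$), though only the one inclusion is needed. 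It then takes the continuum $\gamma\subset A(g)\cup F(g)$ separating $w$ from $\infty$ produced in the proof of Theorem~\ref{theo:disjoint-type}, normalised so that $0\notin\gamma$. If the bounded complementary component $X\ni w$ of $\gamma$ omits $0$, the component of $\pi^{-1}(X)$ containing $z$ is bounded and its boundary is the desired continuum. The crux is the case $0\in X$, where $\pi^{-1}(X)$ is an unbounded $\omega$-periodic domain $Y$ containing a half-plane $Y'$ compactly contained in $F(f)$: there the paper builds the missing walls by hand, observing that $\partial X$ must meet $F(g)$ (since $F(g)$ is unbounded, connected, and contains $0\in X$), that the preimages $t_n\in\partial Y$ of such a point lie in $F(f)$, and that, $F(f)$ being connected, one can join $t_0$ to $\partial Y'$ by a curve in $Y\cap F(f)$ whose $\omega$-translates cut $Y$ into bounded cells, one of which traps $z$. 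This patching argument is exactly what resolves the obstruction you correctly identify (circles about $0$ lift to vertical lines, which enclose nothing), and nothing in your proposal supplies it.
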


\begin{remark}
\emph{Note that, contrary to the previous cases, Theorem \ref{theo:Fatou-functions} does not include a result on the escaping endpoints. In particular, it is not known whether the union of escaping endpoints with infinity is a connected set. Note that it is not necessarily the case that $I(g) = \pi(I(f))$;  see, for example, \cite{david}.}
\end{remark}

Next, we consider the larger class of hyperbolic functions. We need a different notion of endpoints in this setting, since the Julia set may not be a Cantor bouquet. Suppose that $f \in \B$ is a hyperbolic function of finite order, or is a finite composition of such functions. It follows \cite[Corollary 5.3]{Rigidity} that $J(f)$ is a \emph{pinched Cantor bouquet}; in other words, the quotient of a Cantor bouquet by a closed equivalence relation on its endpoints. It follows that our definition of the set $\endpoints(f)$ -- together with its various subsets -- carries over into this setting in an obvious way. Note \cite[Theorem 1.2]{DevHairs} that if $z \in J(f) \setminus A(f)$, then $z$ is an endpoint.

It is natural to ask if Theorem~\ref{theo:disjoint-type} can be generalised from disjoint-type functions to this larger class. In fact this is impossible, because there are hyperbolic functions of finite order with bounded immediate attracting basins. An example, see \cite{BFR}, and, in particular, \cite[Figure 2(c)]{BFR}, is the function 
\[
f(z) := \frac{4\pi}{3}(1 - \cos z).
\]
Other examples of this behaviour occur in the family
\begin{equation}
\label{gdef}
g_\lambda(z) := \lambda z^2 \exp(z - z^2), \qfor \lambda > 0.
\end{equation}
We discuss this family of functions at the end of this paper.


It is, therefore, not the case that $\meanderingendpoints \cup \{\infty\}$ is necessarily totally separated for hyperbolic functions of finite order. However, we can obtain a result similar to Theorem~\ref{theo:disjoint-type} if we consider instead the set of endpoints which both escape and meander; this is defined by $\escapingandmeanderingendpoints(f) := \escapingendpoints(f) \setminus A(f)$. 
\begin{theorem}
\label{theo:hyperbolic-esc}
Suppose that $f$ is a hyperbolic function that is of finite order or can be written as a finite composition of finite order functions in $\mathcal{B}.$ Then $\escapingendpoints(f) \cup \{\infty\}$ is connected, but $\escapingandmeanderingendpoints(f) \cup \{\infty\}$ is totally separated.
\end{theorem}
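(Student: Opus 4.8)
The first conclusion, connectedness of $\escapingendpoints(f)\cup\{\infty\}$, is \cite[Theorem 1.9]{LasseNada}; the work is in the second. The plan is to reduce to the disjoint-type case already settled in Theorem~\ref{theo:disjoint-type}. By \cite[Corollary 5.3]{Rigidity} there is a disjoint-type function $g$ -- of finite order, or a finite composition of finite-order functions in $\mathcal{B}$ -- whose Julia set is a genuine Cantor bouquet, together with a continuous surjection $q\colon J(g)\cup\{\infty\}\to J(f)\cup\{\infty\}$ with $q(\infty)=\infty$ that semiconjugates $g$ to $f$ on their Julia sets and realises $J(f)$ as the quotient of $J(g)$ by a closed equivalence relation collapsing certain endpoints. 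First I would record the standard consequences of such a semiconjugacy: since $q$ is proper and fixes $\infty$, it maps $I(g)\cap J(g)$ onto $I(f)\cap J(f)$ and its inverse image of $I(f)$ is exactly $I(g)\cap J(g)$; since $q$ is injective away from the endpoints it carries endpoints to endpoints; and, using the characterisations of the fast escaping set in \cite{Fast} together with the quasiconformal equivalence between $f$ and $g$ underlying \cite{Rigidity}, it carries $A(g)\cap J(g)$ onto $A(f)\cap J(f)$ with matching inverse image. Consequently $q$ restricts to a surjection of $\escapingandmeanderingendpoints(g)$ onto $\escapingandmeanderingendpoints(f)$.

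The crux of the argument is the following claim: the collapsed equivalence relation identifies no two distinct \emph{escaping} endpoints; equivalently, $q$ is injective on $\escapingendpoints(g)$. Here hyperbolicity is essential. Every singular value of $f$ lies in an immediate attracting basin, and the pinch points of the pinched Cantor bouquet arise from the accumulation structure of the postsingular set; I would show that any endpoint of $g$ at which the relation is nontrivial maps to a pinch point that does not escape. In the model cases with bounded immediate basins, these pinch points lie on the closures of the bounded Fatou components, which are forward invariant up to a periodic cycle, and so are non-escaping. Granting this, distinct escaping endpoints of $g$ have distinct images, so $q$ restricts to a continuous bijection of $\escapingandmeanderingendpoints(g)\cup\{\infty\}$ onto $\escapingandmeanderingendpoints(f)\cup\{\infty\}$, and, crucially, every fibre of $q$ over a point of $\escapingandmeanderingendpoints(f)$ is a single point of $J(g)$.

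To conclude I would transfer total separation. By Theorem~\ref{theo:disjoint-type} applied to $g$, the set $\meanderingendpoints(g)\cup\{\infty\}$ is totally separated, and hence so is its subset $\escapingandmeanderingendpoints(g)\cup\{\infty\}$. Given distinct $w_1,w_2\in\escapingandmeanderingendpoints(f)\cup\{\infty\}$ I lift to distinct $z_1,z_2$ in the $g$-picture and take a relatively clopen $V$ separating them. The separating sets that arise in the proof of Theorem~\ref{theo:disjoint-type} are built from loops lying in $A(g)$ -- provided by the spiders'-web estimates of \cite{Fweb}, \cite{LasseVasso} and \cite{DevHairs} -- which detach $\infty$, together with the transversal clopen decomposition of the Cantor bouquet of escaping hairs. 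Because the fibres of $q$ over escaping endpoints are singletons, any such $V$ is $q$-saturated, so $q(V)$ is relatively clopen in $\escapingandmeanderingendpoints(f)\cup\{\infty\}$ and separates $w_1$ from $w_2$.

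The main obstacle is the crux claim of the second paragraph: proving that the pinching prescribed by \cite{Rigidity} never identifies two escaping endpoints, uniformly across the whole class of finite-order hyperbolic functions and their finite compositions, rather than merely in the single example $\tfrac{4\pi}{3}(1-\cos z)$. Closely related, one must verify the fast-escaping correspondence $q(A(g)\cap J(g))=A(f)\cap J(f)$ with care, since $A(\cdot)$ is defined through the maximum-modulus function, which the semiconjugacy does not preserve on the nose; here I expect to lean on the quasiconformal control underlying \cite{Rigidity} and the robustness of the characterisations in \cite{Fast}.
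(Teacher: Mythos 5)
Your overall strategy---reduce to the disjoint-type function $g$ via Rempe's semiconjugacy and transfer total separation from Theorem~\ref{theo:disjoint-type}---is the same as the paper's, but the two technical pillars on which it rests are both left open in your proposal, and the routes you sketch for them would not close the gaps. Your ``crux claim'' (that the identification collapses no two distinct escaping endpoints) is not something you need to prove at all: it is part of the result you are citing. \cite[Theorem~5.2]{Rigidity} already states that the semiconjugacy $\vartheta\colon J(g)\to J(f)$ restricts to a \emph{homeomorphism} from $I(g)$ onto $I(f)$; the paper records exactly this as Theorem~\ref{theo:rigidityconjugacy} and uses it directly. By contrast, your proposed argument for the claim is not a proof: the assertion that every nontrivial fibre sits on the closure of a bounded immediate basin is unjustified for a general finite-order hyperbolic $f\in\B$ (let alone finite compositions); nothing in the pinched-Cantor-bouquet statement of \cite[Corollary~5.3]{Rigidity} controls where the pinch points lie, and you verify your claim only ``in the model cases''. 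So here you should quote the literature rather than attempt a structural argument.

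The more serious gap is the fast-escaping correspondence $\vartheta(A(g))=A(f)$, which is the actual technical heart of the theorem and which you defer with ``I expect to lean on the quasiconformal control underlying \cite{Rigidity} and the robustness of the characterisations in \cite{Fast}''. That route is precisely the one the paper flags as unavailable: transferring $A$ via the $T(f^n(D))$-characterisation of \cite[Corollary~2.7]{Fast}, as in Theorem~\ref{theo:implicationsofaconjugacy}, requires the conjugating map to be defined on open sets, whereas $\vartheta$ is defined only on $J(g)$, which has empty interior, and no extension to a neighbourhood is provided by \cite{Rigidity}. The paper's Proposition~\ref{prop:vartheta} instead argues metrically: the uniform bound $\operatorname{dist}_W(w,\vartheta^{-1}(w))\le C$ from \cite[equation~(5.2)]{Rigidity}, combined with the hyperbolic-density estimate $\rho_W(w)\ge C'/(|w|\log|w|)$ near infinity, yields $|w|\ge|\vartheta^{-1}(w)|^{C''}$ for $w\in J(f)$ with $|w|$ large, and a comparison of maximum-modulus iterates (using $M(RL,g)=M(R,f)$) then gives $\vartheta(A(g))\subset A(f)$, with the reverse inclusion symmetric. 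Without an argument of this kind your proof is incomplete at its central step. (A smaller point: pushing a relatively clopen set forward through $q$ needs more than saturation---relative clopenness in a subspace does not survive a quotient map unless the restriction is a homeomorphism---but this is repaired for free once the two homeomorphism statements above are in hand, which is exactly how the paper concludes: $(I(g)\setminus A(g))\cup\{\infty\}$ is totally separated by Theorem~\ref{theo:disjoint-type}, $\vartheta$ carries it homeomorphically onto $(I(f)\setminus A(f))\cup\{\infty\}$, and this set equals $\escapingandmeanderingendpoints(f)\cup\{\infty\}$ by \cite[Theorem~1.2]{DevHairs}.)
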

Once again the first conclusion is \cite[Remark 7.2]{LasseNada}, and is included here for emphasis. Escaping endpoints as we define them are called \emph{escaping endpoints in the strong sense} in \cite[Remark 7.2]{LasseNada}. Note that  \cite[Remark 7.1]{LasseNada} gives a related but different definition of escaping endpoints for a class of functions containing functions considered in Theorem \ref{theo:hyperbolic-esc}. \\
%

Finally, our results have a connection with a topological structure called a spider's web, introduced by Rippon and Stallard \cite{Fast}. A set $E \subset \C$ is a spider's web if $E$ is connected, and there exists a sequence $(G_n)_{n\in\N}$ of bounded simply connected domains such that
\[
\partial G_n \subset E, \ G_n \subset G_{n+1}, \text{ for } n\in\N, \text{ and } \bigcup_{n\in\N} G_n = \mathbb{C}.
\]

We have the following extension of Theorem~\ref{theo:disjoint-type}.
\begin{theorem}
\label{theo:disjoint-type-spw}
Let $f$ be a disjoint-type function that is of finite order or can be written as a finite composition of finite order functions in the class $\mathcal{B}$. Then $F(f) \cup A(f)$ is a spider's web.
\end{theorem}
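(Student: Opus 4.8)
The plan is to show that $F(f) \cup A(f)$ is a spider's web by combining the Cantor-bouquet structure (Theorem~\ref{theo:disjoint-type-context}) with the known fact that, for functions of disjoint type, the fast escaping set contains a rich collection of closed curves surrounding the origin. Since $f$ is of disjoint type, $F(f)$ is connected and is the immediate attracting basin of an attracting fixed point; in particular $F(f)$ is a single unbounded completely invariant domain. The complement $J(f)$ is a Cantor bouquet, so every point of $J(f)$ either escapes fast (lies in $A(f)$) or is a meandering endpoint (by Theorem~\ref{theo:disjoint-type-context}, $\meanderingendpoints(f) = J(f) \setminus A(f)$). The intuition is that $F(f) \cup A(f)$ is everything except the meandering endpoints, and those are a totally separated set of ``tips''; removing them from the plane should leave a connected, web-like remainder.

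First I would establish connectedness of $F(f) \cup A(f)$. For this I would use that $F(f)$ is connected and unbounded, and that each hair of the Cantor bouquet, minus its finite endpoint, lies in $I(f)$; in fact in the disjoint-type finite-order setting the hairs (minus endpoints) lie in $A(f)$, which is the content one extracts from \cite{DevHairs} together with the fact that along a hair the iterates grow at the maximal rate. Thus $A(f)$ meets the closure of $F(f)$ along each hair, and $F(f) \cup A(f) = \widehat{\C} \setminus (\{\infty\} \cup \meanderingendpoints(f))$ intersected with $\C$; connectedness would follow from the connectedness of $F(f)$ together with the fact that each arc of $A(f)$ attaches to $\overline{F(f)}$, using that a Cantor bouquet's hairs accumulate everywhere on $J(f)$.

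The second and decisive ingredient is producing the exhausting sequence of bounded simply connected domains $(G_n)$ with $\partial G_n \subset F(f) \cup A(f)$. The natural approach is to invoke the levels sets of the fast escaping set: Rippon and Stallard's theory (the source \cite{Fast} in which the spider's-web notion is introduced) shows that for many functions $A(f)$ itself, or $A_R(f)$-type level sets, form a spider's web precisely when such a nested sequence of loops exists. I would aim to show that the fundamental domains of the logarithmic transform, pulled back under $f$, yield simple closed curves lying in $A(f) \cup F(f)$ that surround the origin and escape to infinity; because $f$ has finitely many tracts and $F(f)$ is connected and unbounded, the boundaries between consecutive pullback annuli can be taken inside $F(f) \cup A(f)$. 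Concretely, one chooses large loops in $A(f)$ (these exist because the hairs in $A(f)$ surround every bounded set once the bouquet structure and the growth of $M(r,f)$ are taken into account) and fills the bounded gaps between hairs with pieces of $F(f)$.

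\textbf{The main obstacle} I anticipate is verifying that one can always close up a loop around the origin staying inside $F(f) \cup A(f)$: a priori a loop surrounding $0$ must cross $J(f)$, and where it crosses $J(f)$ it might hit a meandering endpoint rather than a fast-escaping hair point. Overcoming this requires using that the meandering endpoints form a totally separated (in particular, totally disconnected) set while the fast-escaping hairs are dense and connected up to their endpoints, so that any transversal loop can be perturbed to avoid the endpoints and meet $J(f)$ only in $A(f)$. Making this perturbation precise — ensuring the loop crosses each hair at an interior (fast-escaping) point and passes through $F(f)$ between hairs, while remaining a bounded simple closed curve enclosing the origin and with $G_n \subset G_{n+1}$ exhausting $\C$ — is the technical heart of the argument, and would lean on the explicit hair parametrisations and growth estimates from \cite{DevHairs} and the uniform-escape geometry underlying the head-start condition.
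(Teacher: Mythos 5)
Your proposal founders exactly at the place you flag as ``the technical heart''. The claim that a loop around a bounded set can be perturbed, using only the total separatedness of $\meanderingendpoints(f)$ together with density of the fast-escaping hairs, so that it meets $J(f)$ only in $A(f)$, is not a proof and the heuristic behind it is false. Mayer's theorem (quoted in the introduction) provides the counterexample to the heuristic: for $f_a(z)=e^z+a$ with $a<-1$ (a disjoint-type map), the set $\endpoints(f_a)$ of \emph{all} endpoints is totally separated and the hairs are dense in $J(f_a)$, yet $\endpoints(f_a)\cup\{\infty\}$ is \emph{connected} --- so no continuum avoiding the endpoints can separate any endpoint from infinity. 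Thus ``totally separated set of tips plus dense hairs'' does not permit the perturbation you describe; what makes the statement true for $\meanderingendpoints(f)$ (as opposed to $\endpoints(f)$) is that the loop is allowed to pass through \emph{escaping} endpoints, and exploiting this requires a dynamical growth argument, not a topological one. In the paper this is precisely the content of Lemma~\ref{lemma:Gisnice}: one defines the closed set $B$ of points whose orbits stay ahead of an iterated growth function, shows via Lemma~\ref{growlemma} and Lemma~\ref{RSlemma} that every complementary component of $B$ is bounded, and checks $B\subset A(G)\cup J(G)^c$. You could have closed this gap without reproving anything, simply by citing the claim established in the proof of Theorem~\ref{theo:disjoint-type}, namely that every point of $\meanderingendpoints(f)$ is separated from infinity by a continuum in $A(f)\cup F(f)$; your proposal neither cites it nor supplies a substitute.

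You also miss the device that makes the rest of the paper's proof short. Even granted one such loop, you propose to build the whole exhausting sequence $(G_n)$ by hand, loop by loop. The paper instead proves the general Theorem~\ref{theo:spw}: if $X$ is forward invariant with no bounded components, and there is \emph{one} bounded domain $G$ meeting $J(f)$ with $\partial G\subset X$, then the blowing-up property of the Julia set applied to $f^n(G)$, together with $\partial T(f^n(G))\subset f^n(\partial G)\subset X$, automatically yields a nested exhausting sequence with boundaries in $X$. Applied with $X=A(f)\cup F(f)$ (forward invariant; components of $A(f)$ are unbounded by \cite[Theorem 1]{qnofFandE}, and $F(f)$ is a single unbounded domain for disjoint-type $f$), this finishes the proof. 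As a minor point, your connectedness argument is more elaborate than needed: since $A(f)\subset J(f)=\partial F(f)$ and $F(f)$ is connected, $F(f)\cup A(f)$ is connected for purely elementary reasons (and in the paper connectedness falls out of Theorem~\ref{theo:spw} in any case). So the decomposition of the problem is sound and the obstacle is correctly identified, but the step you offer to overcome it would fail, and that step is the theorem.
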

For reasons of brevity, we prove only this theorem but in fact, other, similar, results are possible. In particular, we note the following.
\begin{remark}\normalfont
We can show that $F(f)\cup A(f)$ is a spider's web for functions considered in Theorem \ref{theo:Fatou-functions}, thus generalising the first part of \cite[Theorem 5.1]{LasseVasso}. Since for these functions the Fatou set consists of one component which is a Baker domain, we deduce that $I(f)$ contains a spider's web, and hence, by \cite[Lemma 4.5]{rippon-stallard13}, $I(f)$ is a spider's web. This result generalises \cite[Remark 1.1 (a)]{Fweb}, where a subclass of functions satisfying the hypotheses of Theorem \ref{theo:Fatou-functions} was considered.
\end{remark}
\subsection*{Structure}
The structure of this paper is as follows. 
First, in Section \ref{log transform_sec}, we present some useful definitions and preliminary results. 
Next, in Section \ref{qc conjugacy_sec}, we prove some results regarding quasiconformally equivalent functions. 
We prove Theorem \ref{theo:disjoint-type} in Section \ref{main proof_sec}.
In Section \ref{infinite_sec} we consider functions which may be of infinite order, and we prove Theorem \ref{theo:disjoint-type-inf-order}. 
We give the proof of Theorem \ref{theo:Fatou-functions} in Section \ref{not B_sec}.
Section \ref{hyperbolic_sec} concerns hyperbolic functions and contains the proof of Theorem \ref{theo:hyperbolic-esc}.
Next, in Section~\ref{spw_sec} we discuss spiders' webs, and we prove Theorem~\ref{theo:disjoint-type-spw}.
Finally, in Section~\ref{examples_sec} we give detailed information about the dynamics of the family of functions in \eqref{gdef}.
\subsection*{Notation}
If $r \in \R$, then we set $r^+ := \max\{r, 0\}$. We denote the unit disc by $\D := \{ z \in \C : |z| < 1\}$ and the right half-plane by $\H := \{ z \in \C: \operatorname{Re} z > 0 \}$. If $S \subset \C$, then we denote the complement of $S$ in $\C$ by $S^c := \C \setminus S$.
%
%
%
%
\section{Properties of functions in the Eremenko-Lyubich class}
\label{log transform_sec}
In this section we give a number of important definitions, together with several useful results on logarithmic transforms of functions of disjoint type. Many of the ideas in this section are well-known, and we refer to, for example, \cite{EandL, Rigidity, RRRS} and the survey \cite{DaveSurvey} for more detail.
\subsection{The logarithmic transform and the class $\Blog$}
Suppose that $f\in\B$. Following the notation used in \cite{RRRS}, let $D$ be a bounded Jordan domain such that $S(f) \cup \{ 0, f(0) \} \subset D$. Let $W$ be the complement of the closure of $D$. Then the components of $V := \{z: f(z) \in W\}$ are simply connected Jordan domains called \textit{logarithmic tracts} of $f$. Note that $f|_V$ is a universal covering of $W$. 

If we take $\mathcal{T}:= \exp^{-1}(V)$ and $H:= \exp^{-1}(W)$, then there is an analytic function $F: \mathcal{T} \to H$ (which can be taken to be $2 \pi i$ periodic) with the property that $\exp (F(z))= f(\exp(z))$, for $z \in \mathcal{T}$. We call $F$ a \textit{logarithmic transform} of $f$. If $\overline{\mathcal{T}} \subset H$, then we say $F$ is of \emph{disjoint type}. It is readily shown that if $F$ is a logarithmic transform of a function $f \in B$, then $f$ is of disjoint type if and only if the same is true of $F$.

It follows from the construction that the following all hold.
\begin{enumerate}[(i)]
\item $H$ is a $2\pi i$ periodic Jordan domain containing a right half plane.
\item Every component of $\mathcal{T}$ is an unbounded Jordan domain, with real parts bounded below and unbounded above.
\item The components of $\mathcal{T}$ have disjoint closures and accumulate only at infinity.
\item For every component $T$ of $\mathcal{T}$, $F : T \to H$ is a conformal isomorphism that extends continuously to the closure of $T$ in $\C$.
\item For every component $T$ of $\mathcal{T}$, $\exp|_T$ is injective.
\item $\mathcal{T}$ is invariant under translation by $2\pi i$.
\end{enumerate}
We denote by $\Blog$ the class of all functions $F : \mathcal{T} \to H$ such that $H, \mathcal{T}$ and $F$ satisfy these conditions whether or not they arise as a logarithmic transform of a {\tef}.
\subsection{Classes of logarithmic transforms and tracts}
We now focus on functions in $\Blog$ with certain additional properties. We use these classes of logarithmic transforms in our main results.

First, we say that a function $F \in \mathcal{B}_{\log}$ is of finite order if 
\[
\limsup \frac{\log \operatorname{Re} F(w)}{\operatorname{Re}w} < \infty \;\;\text{as}\;\operatorname{Re}w \to \infty\;\operatorname{in}\;\mathcal{T}.
\]
Clearly, if $F$ is a logarithmic transform of a function $f \in B$, then $f$ is of finite order if and only if the same is true of $F$.

Second, we say that $F \in \mathcal{B}_{\log}$ is \textit{normalised} if $H$ is the right half-plane, and $|F'(z)| \geq 2$, for all $z\in \mathcal{T}$.  We denote the class of normalised functions in $\B_{\log}$ by $\Blog^n$; see \cite{DevHairs}.

Our remaining three definitions correspond to certain geometric constraints on the tracts of $F$, illustrated in Figure~\ref{fbad}. Roughly speaking, a tract has bounded slope if it lies in a sector, bounded wiggling if it does not ``double back'' on itself too much, and bounded gulfs if any ``gulfs'' that reach forward, never reach too far forward. We now give the more precise definitions.
\begin{definition}
Let $F \in \B_{\log}$. A tract $T$ of $F$ has \emph{bounded slope} with constants $\alpha, \beta > 0$ if
$$\lvert \operatorname{Im} w_1 - \operatorname{Im} w_2 \rvert \leq \alpha \max\{\operatorname{Re} w_1, \operatorname{Re} w_2, 0\} + \beta,\;\;\text{ for all}\; w_1, w_2 \in T.$$
\end{definition}
If all the tracts of $F$ have bounded slope for the same constants, then we say that the tracts have \textit{uniformly bounded slope}. 

\begin{definition}
Let $F \in \mathcal{B}_{\log}$. A tract $T$ of $F$ has \emph{bounded wiggling} with constants $K'> 1$ and $\mu >0$ if, for each point $w_0\in \overline{T}$, every point $w$ on the hyperbolic geodesic of $T$ that connects $w_0$ to $\infty$ satisfies
$$(\operatorname{Re}w)^+ > \frac{1}{K'} \operatorname{Re}w_0 - \mu.$$
\end{definition}
If all the tracts of $F$ have bounded wiggling for the same constants, then we say that the tracts have \emph{uniformly bounded wiggling}.
It is an important fact, see \cite[Theorem 5.6]{RRRS}, that if $F$ is a function of finite order in the class $\mathcal{B}_{\log}^n$ (or even a finite composition of such functions), then the tracts of $F$ have uniformly bounded slope and uniformly bounded wiggling.

\begin{definition}
Let $F \in \Blog^n,$ let $T$ be a tract of $F$, and let $p$ denote the point in $\partial T$ for which $F(p) = 0$.
\begin{enumerate}[(a)]
\item If $w \in \overline{T}$ and $a > \operatorname{Re} w$, then $L_{w, a}$ denotes the unique component of the set $\{ w' : \operatorname{Re} w' = a \} \cap T$ that separates $w$ from $\infty$ in $T$.
\item The tract $T$ has \emph{bounded gulfs} with constant $C > 1$ if $L_{w,a}$ separates $p$ from infinity,
for all $w \in T,$ $a > 0,$ with $\operatorname{Re} w \geq \max\{\operatorname{Re} p, 1\}$ and $a \geq C\operatorname{Re} w.$
\end{enumerate}
\end{definition}

\begin{figure}
	\includegraphics[width=14cm,height=8cm]{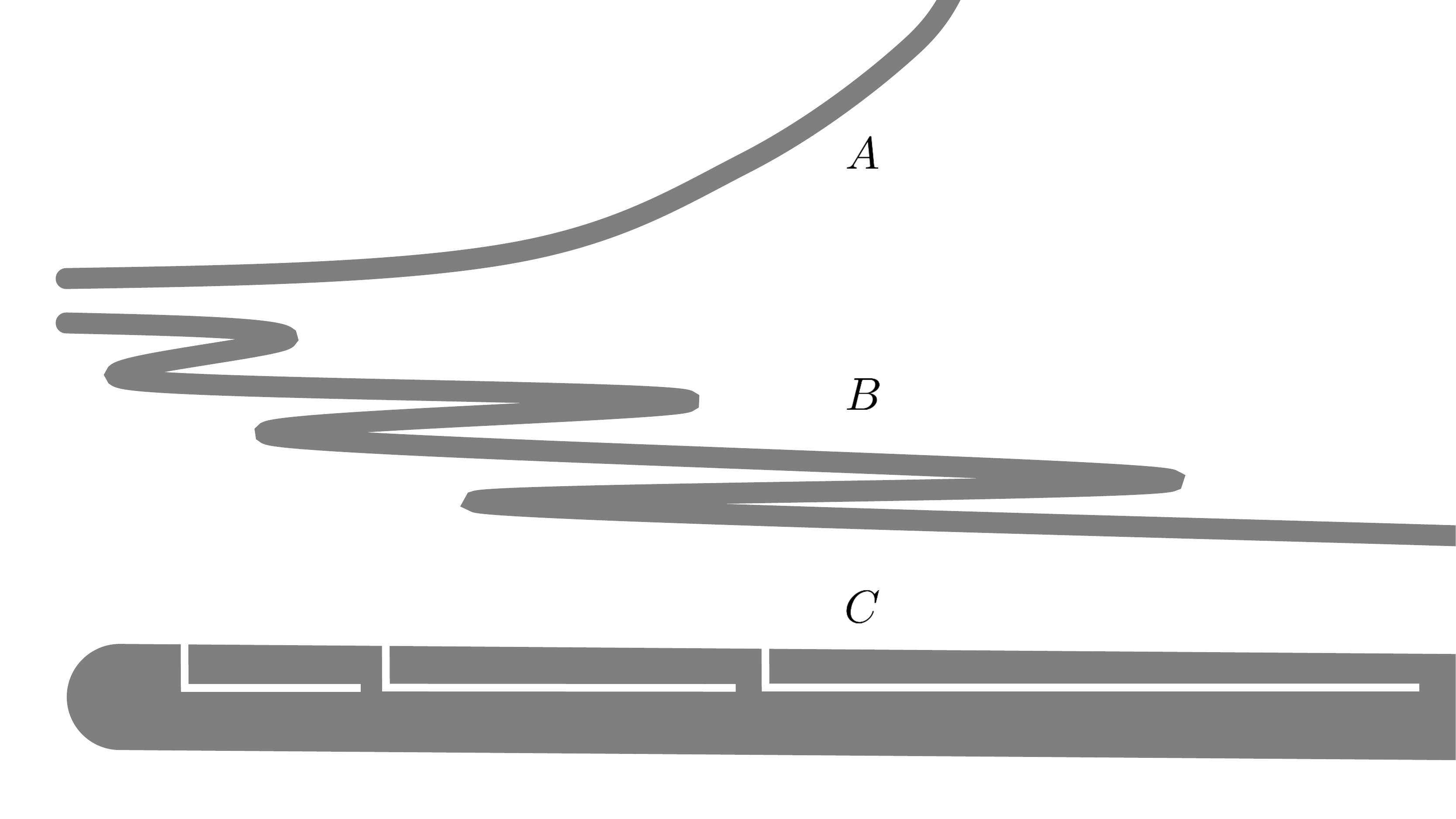}
  \caption{In this approximate graphic, in which the tracts are in gray, tract $A$ does not have bounded slope,  tract $B$ does not have bounded wiggling, and tract $C$ does not have bounded gulfs.}\label{fbad}
\end{figure}
\subsection{Results in tracts}
We now state three results that we need, which all study the behaviour of the logarithmic transform in tracts that have certain properties. The first is \cite[Lemma 3.2]{DevHairs}.
\begin{lemma}
\label{growlemma}
Let $F \in \Blog^n$, and let $K > 1$ and $\alpha, \beta >0$. Let $T$ be a tract of $F$ with bounded wiggling, with constants $K' > 1$ and $\mu
> 0$. Then there exist constants $\epsilon = \epsilon(K) > 0$ and $M = M(K, K', \mu, \alpha, \beta) > 0$ such that if $w, z \in \overline{T}$,
\[
\operatorname{Re} w > K (\operatorname{Re} z)^+ + M,
\]
and
\begin{equation}
\label{e1}
|\operatorname{Im} F(w) - \operatorname{Im} F(z)| \leq \alpha \max\{\operatorname{Re} F(w), \operatorname{Re} F(z)\} + \beta,
\end{equation}
then
\[
\operatorname{Re} F(w) > \exp(\epsilon \operatorname{Re} w) \operatorname{Re} F(z).
\]
\end{lemma}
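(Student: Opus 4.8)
The plan is to work in the conformally natural setting. Since $F$ is normalised we have $H = \H$, and $F \colon T \to \H$ is a conformal isomorphism, so the hyperbolic distances satisfy $\operatorname{dist}_T(w,z) = \operatorname{dist}_{\H}(F(w), F(z))$, and $\operatorname{Re} F > 0$ throughout. The proof then splits into two essentially independent estimates: a purely geometric lower bound $\operatorname{dist}_T(w,z) \gtrsim \operatorname{Re} w$, coming from the thinness of tracts, and a conversion of this distance into the required multiplicative growth of $\operatorname{Re} F$ using the hypothesis \eqref{e1}. The delicate point, which I treat last, is that the conversion only delivers the inequality in the correct direction once one knows that $\operatorname{Re} F(w) \geq \operatorname{Re} F(z)$, and this is precisely where bounded wiggling must be used.

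First I would establish the distance bound. By property (v) of $\Blog$, $\exp$ is injective on $T$, so $T$ contains no two points differing by a nonzero multiple of $2\pi i$; consequently no open disc of radius exceeding $\pi$ fits inside $T$ (such a disc would contain the pair $\zeta \pm i\pi$), and hence $d(\zeta, \partial T) \leq \pi$ for every $\zeta \in T$. The standard lower bound for the hyperbolic density $\rho_T$ of a simply connected domain then gives $\rho_T(\zeta) \geq 1/(2 d(\zeta, \partial T)) \geq 1/(2\pi)$. Since any path in $T$ joining $z$ to $w$ has Euclidean length at least $\operatorname{Re} w - \operatorname{Re} z$, integrating the density yields $\operatorname{dist}_T(w,z) \geq (\operatorname{Re} w - \operatorname{Re} z)/(2\pi)$. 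The hypothesis $\operatorname{Re} w > K(\operatorname{Re} z)^+ + M$ gives $\operatorname{Re} w - \operatorname{Re} z \geq \tfrac{K-1}{K}\operatorname{Re} w$ in all cases, so $\operatorname{dist}_T(w,z) \geq \epsilon_1 \operatorname{Re} w$ with $\epsilon_1 = \tfrac{K-1}{2\pi K}$ depending only on $K$. Note that this step uses neither bounded wiggling nor \eqref{e1}.

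Next I would convert distance into growth. Writing $u = F(w)$, $v = F(z)$ and using $\cosh \operatorname{dist}_{\H}(u,v) = 1 + |u-v|^2 / (2\operatorname{Re} u \operatorname{Re} v)$, I would bound $|u - v|^2$ from above using \eqref{e1}: the imaginary-part difference is at most $\alpha \max\{\operatorname{Re} u, \operatorname{Re} v\} + \beta$, so $|u-v|^2 \leq C_0 \max\{\operatorname{Re} u, \operatorname{Re} v\}^2$ for a constant $C_0 = C_0(\alpha, \beta)$ (the case of bounded real parts being trivial). This gives $e^{\operatorname{dist}_{\H}(u,v)} \leq C_1 \max\{\operatorname{Re} u, \operatorname{Re} v\} / \min\{\operatorname{Re} u, \operatorname{Re} v\}$ with $C_1 = C_1(\alpha,\beta)$. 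Combined with the previous paragraph, $\operatorname{dist}_T(w,z) = \operatorname{dist}_{\H}(u,v) \geq \epsilon_1 \operatorname{Re} w$ is large, so the ratio of real parts is large; and \emph{provided} $\operatorname{Re} F(w) \geq \operatorname{Re} F(z)$, this reads $\operatorname{Re} F(w)/\operatorname{Re} F(z) \geq C_1^{-1} e^{\epsilon_1 \operatorname{Re} w}$. Finally, absorbing the constant into the exponent using $\operatorname{Re} w > M$ with $M = M(K, K', \mu, \alpha, \beta)$ chosen large enough, one obtains $\operatorname{Re} F(w) \geq \exp(\epsilon \operatorname{Re} w)\operatorname{Re} F(z)$ with $\epsilon = \epsilon_1/2 = \epsilon(K)$, as required.

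The hard part will be justifying the direction $\operatorname{Re} F(w) \geq \operatorname{Re} F(z)$: distance and \eqref{e1} are symmetric in $w$ and $z$, and the reverse inequality corresponds exactly to the tract folding back so that a point of large real part maps near the boundary of $\H$. Bounded wiggling is the hypothesis that excludes this. I would argue that, since $\operatorname{Re} w$ is large while $\operatorname{Re} z < \tfrac1K \operatorname{Re} w$, bounded wiggling applied to the hyperbolic geodesic of $T$ connecting $w$ to $\infty$ keeps its real part above $\tfrac1{K'}\operatorname{Re} w - \mu$, so that this geodesic (closed up with a boundary arc) separates $z$ from the infinite end of $T$; since $\operatorname{Re} F \to \infty$ along the end, it follows that $w$ lies between $z$ and the end, whence $\operatorname{Re} F(w) \geq \operatorname{Re} F(z)$ once $M$ is enlarged so that $\tfrac1{K'}\operatorname{Re} w - \mu$ exceeds $(\operatorname{Re} z)^+$. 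Making this separation statement precise — turning the metric consequence of bounded wiggling into the required monotonicity of $\operatorname{Re} F$ — is the step I expect to demand the most care.
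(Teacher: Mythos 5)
Your proposal cannot be checked against an argument in the text, because the paper does not prove this lemma at all: it quotes it as \cite[Lemma 3.2]{DevHairs}. Judged on its own merits, your first two steps are essentially correct. The bound $\operatorname{dist}_T(w,z)\geq(\operatorname{Re} w-\operatorname{Re} z)/(2\pi)$, obtained from the injectivity of $\exp|_T$ and the estimate $\rho_T\geq 1/(2\,d(\cdot,\partial T))$, and the conversion, via the $\cosh$ formula in $\H$ and \eqref{e1}, of a large hyperbolic distance into a large ratio of real parts, are both fine, and together they do yield the conclusion with $\epsilon=\epsilon(K)$ \emph{once} one knows $\operatorname{Re}F(w)\geq\operatorname{Re}F(z)$. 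One smaller objection: the case you dismiss as ``trivial'' (when $\max\{\operatorname{Re}F(w),\operatorname{Re}F(z)\}$ is small, so that $\beta$ cannot be absorbed into $C_0\max^2$) is not trivial by the tools you have set up, but it is fixable using the normalisation $|F'|\geq 2$, which your proposal never invokes: it makes $F^{-1}$ a $\tfrac12$-Lipschitz self-map of the convex domain $\H$ into $\C$, so when both image real parts are small, \eqref{e1} forces $|F(w)-F(z)|$, and hence $|w-z|$, to be bounded, contradicting $\operatorname{Re}w>K(\operatorname{Re}z)^++M$ once $M$ is large.

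The genuine gap is in your final step, and it is twofold. First, the topology is wrong: the hyperbolic geodesic of $T$ from $w$ to $\infty$ is an arc from an interior point to the end of $T$; it does not disconnect the simply connected domain $T$ (compare $\D\setminus[0,1)$), and it is disjoint from $\partial T$, so there is no ``boundary arc'' available to close it up. The claim should instead be made metrically: if $\operatorname{Re}F(z)>\operatorname{Re}F(w)$, then $P:=\operatorname{Re}F(z)+i\operatorname{Im}F(w)$ lies on that geodesic's image, \eqref{e1} gives $\operatorname{dist}_{\H}(P,F(z))\leq\alpha+\beta/\operatorname{Re}F(z)$, and pulling back, bounded wiggling gives $\operatorname{Re}F^{-1}(P)>\operatorname{Re}w/K'-\mu$ while $\rho_T\geq 1/(2\pi)$ gives $|F^{-1}(P)-z|\leq 2\pi\bigl(\alpha+\beta/\operatorname{Re}F(z)\bigr)$, so that $\operatorname{Re}z>\operatorname{Re}w/K'-\mu-2\pi(\alpha+\cdots)$. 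Second, and more seriously, this lower bound contradicts the hypothesis $(\operatorname{Re}z)^+<(\operatorname{Re}w-M)/K$ only when $K\geq K'$. Your plan of ``enlarging $M$ so that $\operatorname{Re}w/K'-\mu$ exceeds $(\operatorname{Re}z)^+$'' cannot work when $K<K'$: the hypothesis allows $(\operatorname{Re}z)^+$ to be as large as $(\operatorname{Re}w-M)/K$, and since $\operatorname{Re}w$ is unbounded while $M$ is a constant, $\operatorname{Re}w/K'-\mu$ will be smaller than this for all large $\operatorname{Re}w$, whatever $M$ is. This is not a technicality you can push through with more care: for $K<K'$ the monotonicity $\operatorname{Re}F(w)\geq\operatorname{Re}F(z)$ genuinely fails for a hairpin-shaped tract that runs out to real part $r$, folds back to real part about $r/K'$ (exactly what wiggling constant $K'$ permits, and compatible with $|F'|\geq2$ if the fold is far from the root), and then runs to $\infty$: take $w$ at the tip and $z$ just beyond the fold, both on the central geodesic so that $F(w),F(z)$ are real and \eqref{e1} holds trivially. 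So your outline closes only under the additional relation $K\geq K'$ — the head-start constant must dominate the wiggling constant, which is how the argument is organised in the literature on head-start conditions for bounded-wiggling tracts (cf.\ \cite{RRRS}) — and that relation is nowhere recorded in the statement you were given, nor supplied by your argument.
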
 
The second result we need is \cite[Lemma 5.3]{DevHairs}.
\begin{lemma}
\label{inflemma}
Let $F \in \Blog^n$, let $T$ be a tract of $F$, and let $p$ be the point of $\partial T$ for which $F(p) = 0$. Suppose that $T$ has bounded gulfs with constant $C>1$, and bounded wiggling with constants $K' > 1$ and $\mu > 0$. Then there is a constant $D=D(C,K',\mu)>1$ such that if $A > \max\{\operatorname{Re} p, 1\}$ and $a \geq DA$, then the following both hold.
\begin{enumerate}[(a)]
\item $L_{z, a} = L_{p, a}$, for $z \in T$ and Re $z = A$.\label{inflemmaa}
\item $\max_{w \in L_{p, a}} \operatorname{Re} F(w) \geq \max_{w \in T, \operatorname{Re} w = A} \operatorname{Re} F(w)$.\label{inflemmab}
\end{enumerate}
\end{lemma}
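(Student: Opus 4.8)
The plan is to derive the second assertion as a short consequence of the first, and to concentrate the work in the first. I would begin by recording two standard facts about the class $\Blog^n$. Since $T$ and $\H$ are Jordan domains in $\widehat{\C}$ and $F\colon T\to\H$ is a conformal isomorphism, $F$ extends to a homeomorphism $\overline{T}\to\overline{\H}$ of their closures in $\widehat{\C}$; in particular the prime end of $T$ at $\infty$ corresponds to the prime end of $\H$ at $\infty$, so that $\operatorname{Re} F(w)\to\infty$ precisely as $w\to\infty$ in $T$. Secondly, each component $\gamma$ of $\{w':\operatorname{Re} w'=a\}\cap T$ is a crosscut of $T$, and $\gamma$ separates a point $x$ from $\infty$ exactly when $x$ lies in the component of $T\setminus\gamma$ whose closure does not contain the prime end at $\infty$. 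Throughout, write $S_A:=\{w\in T:\operatorname{Re} w=A\}$ for the cross-section of $T$ at level $A$.

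For the first assertion, fix $z\in S_A$, so $\operatorname{Re} z=A\ge\max\{\operatorname{Re} p,1\}$. The notation $L_{w,a}$ presupposes that, among the level-$a$ crosscuts of $T$, exactly one separates $w$ from $\infty$; concretely this is the crosscut through which $w$ reaches the unique unbounded component of $\{\operatorname{Re}>a\}\cap T$. I would first check that this is genuinely well defined in the relevant range: because $\overline{T}$ meets $\widehat{\C}$ in a single prime end at $\infty$, the set $\{\operatorname{Re}>a\}\cap T$ has a unique unbounded component once $a$ is large, and bounded wiggling makes this quantitative, furnishing a constant (contributing to $D$) beyond which the tract cannot fold back below level $A$, so that each $z\in S_A$ accesses the forward component through a single crosscut. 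Granting this, the coincidence $L_{z,a}=L_{p,a}$ is delivered by the bounded gulfs hypothesis: taking $D\ge C$ gives $a\ge DA\ge C\operatorname{Re} z$, so $L_{z,a}$ separates $p$ from $\infty$; as $L_{p,a}$ is by definition the unique level-$a$ crosscut separating $p$ from $\infty$, we conclude $L_{z,a}=L_{p,a}$. Since $z$ was arbitrary, this one crosscut separates the whole cross-section $S_A$ from $\infty$, which is the first assertion. I expect the main obstacle to be exactly this quantitative well-definedness: pinning down $D=D(C,K',\mu)$ so that a single level-$a$ crosscut simultaneously captures $p$ (via bounded gulfs) and all of $S_A$ (via bounded wiggling), with the wiggling bound, that the geodesic from any $w_0$ to $\infty$ stays above $\tfrac{1}{K'}\operatorname{Re} w_0-\mu$, providing the needed control once, say, $D\ge\max\{C,\,K'(1+\mu)+1\}$, so that $A\ge1$ forces $\tfrac{1}{K'}a-\mu>A$.

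Granting the first assertion, the second follows by a ``push to infinity'' argument that needs no further geometry. Let $w^{*}\in S_A$ be a point at which $\operatorname{Re} F$ attains (or approaches) its supremum over $S_A$, and consider the horizontal ray $\{F(w^{*})+t:t\ge0\}$, which stays in $\H$ since $\operatorname{Re} F(w^{*})>0$. Its preimage $\sigma:=F^{-1}(\{F(w^{*})+t:t\ge0\})$ is a curve in $T$ running from $w^{*}$ to the prime end at $\infty$ (using $F(\infty)=\infty$), along which $\operatorname{Re} F$ is strictly increasing. By the first assertion the crosscut $L_{p,a}$ separates $w^{*}$ from $\infty$, so $\sigma$ must cross $L_{p,a}$, say at $w'$; there $\operatorname{Re} F(w')=\operatorname{Re} F(w^{*})+t'\ge\operatorname{Re} F(w^{*})$. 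Hence $\max_{w\in L_{p,a}}\operatorname{Re} F(w)\ge\operatorname{Re} F(w')\ge\operatorname{Re} F(w^{*})=\max_{w\in T,\,\operatorname{Re} w=A}\operatorname{Re} F(w)$, which is the second assertion; taking a supremum over near-maximisers $w^{*}$ disposes of any issue about attainment. Thus the only genuinely delicate point is the well-definedness and coincidence of the separating crosscut in the first assertion, and everything else is bookkeeping with the definitions of bounded gulfs and bounded wiggling together with the conformal correspondence $F\colon T\to\H$.
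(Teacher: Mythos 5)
First, a point of order: the paper does not prove Lemma~\ref{inflemma} at all; it imports it verbatim from \cite[Lemma 5.3]{DevHairs} (``The second result we need is \ldots''). So there is no internal proof to compare yours against, and I have assessed your argument on its own terms. Your reduction of part (b) to part (a) is correct and natural: the geodesic $\sigma = F^{-1}(\{F(w^{*})+t : t\ge 0\})$ lands at the prime end of $T$ at infinity, so it must meet $L_{p,a}$ once part (a) guarantees that $L_{p,a}$ separates $w^{*}$ from infinity, and $\operatorname{Re} F$ increases along $\sigma$; the limiting argument over near-maximisers is fine. The problem is part (a), and it is the whole lemma. Your equality step reads the word ``unique'' in the definition of $L_{w,a}$ as an assertion that \emph{at most one} component of $\{w' : \operatorname{Re} w' = a\}\cap T$ separates a given point from infinity, so that a single application of bounded gulfs (which needs only $D\ge C$) instantly forces $L_{z,a}=L_{p,a}$. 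That reading is untenable: under the very hypotheses of the lemma, a point can be separated from infinity by several level-$a$ crosscuts. Consider a tract shaped like a snake that crosses the line $\{\operatorname{Re} w = a\}$ three times --- out to real part $a+10$, back to $a-1$, then out to infinity; this has bounded slope, bounded wiggling and bounded gulfs with harmless constants, yet all three level-$a$ crosscuts separate a point near the beginning of the tract from infinity. The only coherent meaning of $L_{w,a}$ --- and the one this paper itself relies on, e.g.\ in Proposition~\ref{prop:infnew}, where one concludes that $w$ lies in a bounded component of $T\setminus L_{p,t}$ --- is the \emph{first} level-$a$ crosscut met by every path from $w$ to infinity, equivalently the unique separating component not itself separated from $w$ by another one.

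With that meaning, your bounded-gulfs step shows only that $L_{z,a}$ is \emph{some} level-$a$ crosscut separating $p$ from infinity; it does not exclude the possibility that a different level-$a$ crosscut $\delta$ separates $p$ from $L_{z,a}$, i.e.\ that $p$ sits inside a gulf of its own whose mouth lies at level $a$, hanging off the route from $z$ to infinity. Excluding that configuration is the genuine content of part (a), and it requires \emph{both} hypotheses working together: if the room containing $z$ can be traversed (from its entrance door to $L_{z,a}$) at real parts above $C\operatorname{Re} z$, then bounded gulfs applied at $z$ with the \emph{lower} level $a'\approx C\operatorname{Re} z$ is violated; while if every such traverse is forced down to real parts $\le C\operatorname{Re} z$, then bounded wiggling is violated at a point of real part $a$ on the entrance door, provided $D$ is large compared with $K'C$ and $K'\mu$. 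This two-scale interplay is exactly why the cited lemma has $D=D(C,K',\mu)$ rather than $D=C$. Two red flags you should have caught: your argument never actually uses bounded wiggling (you derive $\tfrac{1}{K'}a-\mu>A$ but apply it to nothing), and it would prove part (a) with $D=C$, which would make the statement a tautological restatement of the bounded-gulfs definition. Relatedly, your assertion that bounded wiggling furnishes a constant ``beyond which the tract cannot fold back below level $A$'' is false: wiggling constrains hyperbolic geodesics to infinity, not the tract, and a tract with bounded wiggling may contain arbitrarily deep dead-end fingers that geodesics simply never enter --- and a finger of that kind containing $p$ is precisely the danger here.
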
 
Finally, we use the following \cite[Lemma 5.4]{DevHairs}.
\begin{lemma}
\label{inflemma2}
Let $F \in \Blog^n$, let $T$ be a tract of $F$, and let $p$ be the point of $\partial T$ for which $F(p) = 0$. If $a > \operatorname{Re} p$, and $b > a + 4 \pi$, then
\[
\frac{|F(w_b)|}{|F(w_a)|} \geq \exp\left(\frac{1}{2}(b-a) - 4\pi\right), \qfor w_a \in L_{p, a}, w_b \in L_{p, b}.
\]
\end{lemma}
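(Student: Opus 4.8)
The plan is to pass to logarithmic coordinates on the target. Since $F$ maps $T$ conformally onto $\H$ and omits the value $0$, the map $\phi := \log \circ F$ is a conformal isomorphism from $T$ onto the strip $S := \{w : |\operatorname{Im} w| < \pi/2\}$, and $\operatorname{Re} \phi(w) = \log|F(w)|$. Writing the desired inequality in the form $\operatorname{Re}\phi(w_b) - \operatorname{Re}\phi(w_a) \geq \tfrac12(b-a) - 4\pi$, the whole problem reduces to estimating how fast $\operatorname{Re}\phi$ (equivalently $\log|F|$) increases as the argument moves forward through the tract from the crosscut $L_{p,a}$ to $L_{p,b}$.

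First I would establish the key growth rate, which rests on the fact that tracts are uniformly thin. Because $\exp|_T$ is injective, $T$ can contain no round disc of radius exceeding $\pi$ (such a disc would contain two points differing by $2\pi i$); hence every point of $T$ lies within Euclidean distance $\pi$ of $\partial T$, so that $T$ is comparable to a strip of width $2\pi$. Comparing hyperbolic metrics, the density $\lambda_T$ of $T$ is bounded below by that of such a strip, namely $\lambda_T \geq \tfrac12$. Since $\phi$ is a hyperbolic isometry onto $S$, whose density is at least $1$, the inverse $\Phi := \phi^{-1}$ satisfies $|\Phi'| \leq 2$ along the real axis of $S$. That axis is mapped by $\Phi$ onto the core arc $\Gamma := \{w \in T : F(w) > 0\}$, so along $\Gamma$ the real part $\operatorname{Re} w$ increases with Euclidean speed at most $2$ in the parameter $u = \log|F|$. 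Consequently, between the two points of $\Gamma$ at which $\operatorname{Re} w$ equals $a$ and $b$, the quantity $\log|F|$ must increase by at least $\tfrac12(b-a)$.

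Next I would transfer this estimate from the core arc to the given points $w_a \in L_{p,a}$ and $w_b \in L_{p,b}$. Each crosscut is a vertical segment of imaginary extent less than $2\pi$, again by injectivity of $\exp|_T$, and so has bounded hyperbolic diameter in $T$; thus $\log|F|$ varies by at most $2\pi$ along each of $L_{p,a}$ and $L_{p,b}$. Combining these two boundary fluctuations with the core estimate of the previous paragraph, and using that $L_{p,a}$ and $L_{p,b}$ are nested (the former separating $p$ from the latter), yields $\log|F(w_b)| - \log|F(w_a)| \geq \tfrac12(b-a) - 4\pi$, which is precisely the assertion after exponentiating.

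The hard part will be making the constant $\tfrac12$ rigorous: since we assume neither bounded wiggling nor bounded gulfs here, the tract may genuinely meander, and one cannot simply enclose $T$ in a horizontal strip of width $2\pi$. The honest route is to work with the intrinsic thinness (no large inscribed disc) together with the conformal isometry $\phi$, tracking the bounded vertical fluctuations through the $2\pi i$-periodicity and the injectivity of $\exp|_T$ rather than through any global geometric normal form. The additive loss $4\pi$ is exactly what the two crosscut fluctuations contribute, and it is absorbed cleanly once the comparison between hyperbolic distance in $\H$ and $\log$-modulus -- equivalently the bound $\lambda_S \geq 1$ on the strip -- is set up.
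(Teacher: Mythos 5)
The paper does not actually prove this statement -- it is quoted verbatim from \cite[Lemma 5.4]{DevHairs} -- so your attempt has to stand on its own, and it does not. There are two genuine gaps, one of which is fatal to the whole strategy. First, the constant $\tfrac12$: you obtain it by comparing the hyperbolic density of $T$ with that of a strip of width $2\pi$, but domain monotonicity requires $T$ to be \emph{contained} in such a strip, which is exactly the containment you concede at the end cannot be assumed. The fallback you offer -- ``intrinsic thinness'', i.e.\ no inscribed disc of radius $\pi$ -- gives, via the Koebe estimate $\rho_T(w)\geq 1/(2\operatorname{dist}(w,\partial T))$, only $\rho_T\geq \tfrac{1}{2\pi}$, which would yield growth rate $\tfrac{1}{2\pi}(b-a)$ rather than $\tfrac12(b-a)$. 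The bound $\rho_T\geq\tfrac12$ is in fact true, but the argument is different: since $\exp|_T$ is injective, $\exp(T)$ is simply connected and omits $0$, so $\rho_{\exp(T)}(\zeta)\geq 1/(2\operatorname{dist}(\zeta,\partial\exp(T)))\geq 1/(2|\zeta|)$, and pulling back by $\exp$ gives $\rho_T\geq\tfrac12$. You never give this (or any valid) derivation, so the key constant is unproved; note also that you never use the normalisation $|F'|\geq 2$, which is part of the definition of $\Blog^n$.

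Second, and fatally: the transfer from the core arc $\Gamma$ to arbitrary points of the crosscuts rests on the claim that $L_{p,a}$ ``has bounded hyperbolic diameter in $T$'' and hence that $\log|F|$ varies by at most $2\pi$ along it. Both parts are false. A crosscut has its endpoints on $\partial T$, so its hyperbolic diameter in $T$ is \emph{infinite}; small Euclidean length gives no control whatsoever on hyperbolic diameter near the boundary. Moreover the conclusion itself fails in $\Blog^n$: take a tract inside the strip $\{0<\operatorname{Im}w<2\pi\}$ whose flow is forced through a thin vertical channel of width $\delta$ at real part $a$ (insert two walls into the strip, one with an opening at the top, the next with an opening at the bottom). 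The separating crosscut at real part $a$ then runs the length of that channel, and $\log|F|$ varies along it by roughly $2\pi^2/\delta$, which is arbitrarily large. (The lemma survives in such examples only because the additional growth occurs downstream of the \emph{entire} crosscut -- which is precisely what your argument does not see.) So the strategy ``core geodesic plus bounded oscillation on the two given crosscuts'' cannot be patched; the genuine proof must exploit the whole family of intermediate separating crosscuts, each of Euclidean length at most $2\pi$, together with the expansion $|F'|\geq 2$, rather than the two given crosscuts alone. A smaller unproved step of the same kind: your core-arc estimate needs the crossing of $L_{p,b}$ to occur at a larger value of the parameter $u=\log|F|$ than the crossing of $L_{p,a}$, i.e.\ the nesting of the two crosscuts, which you assert parenthetically but never establish.
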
 
\subsection{Dynamically important sets of a function in class the $\Blog$}
In this subsection we define the Julia, escaping and fast escaping sets of a function $F$ in the class $\Blog$. The Julia set is defined by
\[
J(F) := \{ z \in \overline{\mathcal{T}} : F^n(z) \in \overline{\mathcal{T}}, \text{ for } n \in \N\}.
\]

The escaping set is a subset of the Julia set, defined by
\[
I(F) := \{ z \in J(F) : \operatorname{Re } F^n(z) \rightarrow\infty \text{ as } n \rightarrow\infty\}.
\]

The fast escaping set is a subset of the escaping set, defined by 
\[
A(F) := \{z \in J(F) : \text{there exists } \ell \in \N \text{ s.t. } \operatorname{Re} F^{n+\ell}(z) \geq M^n(R,F), \text{ for } n \in \N\},
\]
where
\[
M(r,F) := \max_{\operatorname{Re} w = r} \operatorname{Re} F(w),
\]
and $R>0$ is any value so large that $M(r,F) > r$ for $r \geq R$.

%
\subsection{The logarithmic transform of a disjoint-type function}
In the remainder of this section we suppose that $F \in \Blog$ is a logarithmic transform of a disjoint-type function $f$. It can be seen that, with this assumption, we have that $J(f) = \exp J(F)$. Since the exponential is locally a homeomorphism, it can be deduced from the definition that $J(f)$ is a Cantor bouquet if and only if $J(F)$ is one. We use this fact in several arguments. 

Each branch of the logarithm maps a component of $J(f)$ to a component of $J(F)$. Hence, we can use $\endpoints(F)$, $\escapingendpoints(F)$ and $\meanderingendpoints(F)$ to refer to the endpoints, non-escaping endpoints and meandering endpoints of $F$. Note that for a disjoint-type function we have 
\[
I(f) = \exp I(F), \quad\text{ and }\quad A(f) = \exp A(F).
\]
It follows that we also have 
\[
\meanderingendpoints(f) = \exp \meanderingendpoints(F) \quad\text{ and }\quad \escapingendpoints(f) = \exp \escapingendpoints(F).
\]

For completeness, we briefly sketch a proof that $A(f) = \exp A(F)$. Choose $R>0$ large. Suppose that $z = \exp w$. We have that $w \in A(F)$ if and only if the forward orbit of $w$ lies in the tracts of $F$, and also there exists $\ell \in \N$ such that $\operatorname{Re} F^{n+\ell}(w) \geq M^n(R, F)$, for $n \in \N$. Since the complement of the tracts of $f$ lies in $F(f)$, we have that $z \in A(f)$ if and only if the forward orbit of $z$ lies in the  tracts of $f$, and also there exists $\ell \in \N$ such that $|f^{n+\ell}(z))| \geq M^n(e^R, f)$, for $n \in \N$. Recall that $\exp \circ F = f \circ \exp$. We deduce that $\exp \operatorname{Re} F^{n+\ell}(w) = |f^{n+\ell}(z)|$ and $\exp M^n(R, F) = M^n(e^R, f)$. The result follows.
\subsection{Other results and definitions}
In order to prove our result on meandering endpoints we make use of the following 
\cite[Lemma 2.8]{LasseVasso}. Recall that points $a, b \in X$ are separated in a metric space $X$ if there is an open and closed set $U \subset X$ that contains $a$ but not $b$.
\begin{lemma}
\label{lemm:LasseVasso}
Suppose that $X$ is a metric space, with $x \in X$, and that $A := X\setminus\{x\}$ is totally separated. If, in addition, every point of $A$ is separated from $x$ in $X$, then $X$ is totally separated.
\end{lemma}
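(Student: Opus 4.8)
The plan is to verify the definition of total separation directly: I would take two arbitrary distinct points of $X$ and produce a relatively open and closed subset of $X$ containing one but not the other. It is natural to split into two cases, according to whether one of the two chosen points is the distinguished point $x$.

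First I would dispose of the easy case, in which one of the points is $x$ and the other is some $a \in A$. Then the second hypothesis directly provides an open and closed set $U \subset X$ with $a \in U$ and $x \notin U$; if instead I need the separating set to contain $x$ rather than $a$, I simply pass to the complement $X \setminus U$, which is again open and closed. This settles the case at once.

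The substantive case is when both points lie in $A$, say $a, b \in A$ with $a \neq b$. Here the first hypothesis gives a set $V$ that is open and closed \emph{in the subspace} $A$, with $a \in V$ and $b \notin V$. The difficulty is that $V$ need not be open and closed in $X$. To repair this I would use the second hypothesis to choose an open and closed set $W \subset X$ with $a \in W$ and $x \notin W$, and then set $U := V \cap W$. The point of intersecting with $W$ is that $x \notin W$ forces $W \subset A$, and I claim this $U$ is open and closed in $X$, contains $a$, and omits $b$ (the last because $b \notin V$).

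The key step, and the main (if mild) obstacle, is promoting ``open and closed in $A$'' to ``open and closed in $X$''. Since $X$ is a metric space, the singleton $\{x\}$ is closed, so $A = X \setminus \{x\}$ is open in $X$; hence any subset that is open in $A$ is automatically open in $X$, which gives that $U$ is open in $X$. For closedness I would first note that $W$, being open and closed in $X$ and contained in $A$, is in particular open and closed in $A$, so that $U = V \cap W$ is closed in $A$. Now I would observe that $U \subset W$ and that $W$ is closed in $X$, so the closure $\overline{U}$ taken in $X$ satisfies $\overline{U} \subset W \subset A$. Therefore $\overline{U} = \overline{U} \cap A$, and the latter is exactly the closure of $U$ in the subspace $A$, which equals $U$ because $U$ is closed in $A$. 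Thus $\overline{U} = U$, so $U$ is closed in $X$ as required. Applying Lemma~\ref{lemm:LasseVasso}'s conclusion, $U$ separates $a$ from $b$ in $X$, and since the two cases exhaust all pairs of distinct points, $X$ is totally separated.
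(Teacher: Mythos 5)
Your proof is correct. Note that the paper does not actually prove this lemma: it is quoted verbatim from \cite[Lemma 2.8]{LasseVasso}, so there is no internal proof to compare against. Your direct verification is sound and complete: the case split (one point equal to $x$ versus both in $A$) is exhaustive, and the key step of upgrading the relatively clopen set $V \subset A$ to a set clopen in $X$ by intersecting with the clopen set $W \ni a$, $x \notin W$ is exactly the right move --- openness of $U = V \cap W$ in $X$ follows since $A$ is open (here the metric, or just $T_1$, hypothesis is used), and your closure argument $\overline{U} \subset W \subset A$, hence $\overline{U} = \operatorname{cl}_A(U) = U$, correctly establishes closedness in $X$.
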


We also use the following definition.
\begin{definition}\label{def:set-sep}
 If $x,y\in \widehat{\C}$, we say that $E\subset \widehat{\C}$ \emph{separates} $x$ from $y$ 
      if $x$ and $y$ are separated in
      $(\C\setminus E)\cup \{x,y\}$. 
      \end{definition}
			
Finally we use the following, which is part of \cite[Lemma 1]{Slow}.
\begin{lemma}
\label{RSlemma}
Suppose that $(E_n)_{n \geq 0}$ is a sequence of compact sets in $\C$, and that $f : \C \to \C$ is a continuous function such that
\[
f(E_n) \supset E_{n+1}, \qfor n \geq 0.
\]
Then there exists $\zeta$ such that $f^n(\zeta) \in E_n$, for $n \geq 0$.
\end{lemma}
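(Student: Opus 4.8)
The plan is to use a standard compactness argument based on a nested intersection of preimages. The natural object to introduce is, for each $n \geq 0$, the set of starting points in $E_0$ whose first $n+1$ iterates land correctly:
\[
K_n := \{ z \in E_0 : f^j(z) \in E_j \text{ for } 0 \leq j \leq n \} = E_0 \cap \bigcap_{j=0}^{n} (f^j)^{-1}(E_j).
\]
I would first record the routine structural facts. Each $E_j$ is compact, hence closed, and each iterate $f^j$ is continuous, so each $(f^j)^{-1}(E_j)$ is closed; intersecting with the compact set $E_0$ shows that every $K_n$ is compact. The sequence is plainly nested, $K_{n+1} \subseteq K_n$, since passing from $n$ to $n+1$ only imposes one further constraint. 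Note also that any $\zeta \in \bigcap_{n \geq 0} K_n$ automatically satisfies $f^n(\zeta) \in E_n$ for all $n \geq 0$ (the case $n=0$ being $\zeta \in E_0$), so the whole lemma reduces to showing this intersection is nonempty.

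The crux is therefore to prove that each $K_n$ is nonempty; once that is done the conclusion follows immediately, since a nested sequence of nonempty compact sets has nonempty intersection by Cantor's intersection theorem. To populate $K_n$ I would build a finite orbit backwards using the hypothesis $f(E_m) \supset E_{m+1}$. Starting from an arbitrary point $w_n \in E_n$, the inclusion $f(E_{n-1}) \supset E_n$ supplies a preimage $w_{n-1} \in E_{n-1}$ with $f(w_{n-1}) = w_n$; iterating, $f(E_{m-1}) \supset E_m$ yields $w_{m-1} \in E_{m-1}$ with $f(w_{m-1}) = w_m$, all the way down to $w_0 \in E_0$. By construction $f^j(w_0) = w_j \in E_j$ for $0 \leq j \leq n$, so $w_0 \in K_n$.

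The main (indeed the only nontrivial) point is this backward construction, and in particular the implicit requirement that the $E_n$ be nonempty: the argument, and the conclusion itself, fail if some $E_n$ is empty, and one checks that under the standing hypothesis nonemptiness of all the $E_n$ is equivalent to $E_0 \neq \emptyset$. Everything else is elementary point-set topology, so I would keep the exposition brief: state the compactness and nesting of the $K_n$, exhibit the backward chain to see each is nonempty, and then invoke the finite-intersection property of compact sets to extract $\zeta$.
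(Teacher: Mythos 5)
The paper does not actually prove this lemma; it quotes it as part of \cite[Lemma 1]{Slow}, so there is no internal proof to compare against. Your argument is precisely the standard one (and the one given in the cited source): the sets $K_n = E_0 \cap \bigcap_{j=0}^{n} (f^j)^{-1}(E_j)$ are compact and nested, each is nonempty by choosing a point of $E_n$ and pulling it back one step at a time through the inclusions $f(E_{m-1}) \supset E_m$, and the finite-intersection property of compact sets yields $\zeta$. That part is correct and complete, provided every $E_n$ is nonempty.

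However, your closing remark --- that under the standing hypothesis, nonemptiness of all the $E_n$ is \emph{equivalent} to $E_0 \neq \emptyset$ --- is false: you have the direction of propagation backwards. The inclusion $f(E_n) \supset E_{n+1}$ propagates \emph{emptiness} forwards and \emph{nonemptiness} backwards: if $E_{n+1} \neq \emptyset$ then $E_n \neq \emptyset$, but $E_0 \neq \emptyset$ says nothing about $E_1$. For example, take $E_0 = \{0\}$ and $E_n = \emptyset$ for $n \geq 1$; then $f(E_0) \supset E_1$ and $f(E_n) \supset E_{n+1}$ hold trivially for any continuous $f$, yet the conclusion of the lemma fails. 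So nonemptiness cannot be ``checked'' from $E_0 \neq \emptyset$; instead, the lemma must simply be read with the (implicit, standard) convention that the compact sets $E_n$ are nonempty --- as they are in both applications in this paper, where the $E_n$ are the curves $\gamma_k$. With that reading your proof is complete; without it, the statement itself is false, not merely your argument.
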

%
%
%
\section{Functions related by quasiconformal maps}
\label{qc conjugacy_sec}
In many of our proofs we need to transfer certain properties of one function to a second function that is related to the first by some quasiconformal map. In this section we give some results which are required to achieve this.

We say that two functions $F,G  \in \B_{\log}$, with domains $V$ and $W$, are \textit{quasiconformally equivalent} if there are quasiconformal maps  $\Phi, \Psi: \C \to \C$ with the following four properties; here for $R\in\R$ we use the notation $\H_R := \{ z : \operatorname{Re} z > R \}$.
\begin{enumerate}[(i)]
\item The maps $\Phi$ and $\Psi$ each commute with $z \mapsto z+2\pi i$.
\item We have that $\operatorname{Re}\Phi(z) \to \pm \infty$ as $\operatorname{Re}z \to \pm\infty$ (and the same holds for $\Psi$).
\item For sufficiently large $R$, $\Phi(F^{-1}(\mathbb{H}_R)) \subset W$ and $\Phi^{-1}(G^{-1}(\mathbb{H}_R)) \subset V$. 
\item $\Psi \circ F = G \circ \Phi$, wherever both compositions are defined.
\end{enumerate}

We need the following, which is part of \cite[Theorem 3.1]{Rigidity}. Note that the final statement is not given in \cite{Rigidity}, but follows easily from the proof.
\begin{theorem}
\label{lasseconj}
Suppose that two disjoint-type functions in $\Blog$ 
\[
F : V \to H, \quad\text{and}\quad G : \Phi(V) \to \Psi(H),
\]
are quasiconformally equivalent. Then there is a quasiconformal map $\Theta : \C \to \C$ such that $\Theta \circ F = G \circ \Theta$ on $V$,
\begin{equation}
\label{thetaeq}
\Theta(z + 2\pi i) = \Theta(z) + 2\pi i, \qfor z \in \C,
\end{equation}
and $\operatorname{Re} \Theta(z) \rightarrow -\infty$ as $\operatorname{Re} z \rightarrow -\infty$. 
\end{theorem}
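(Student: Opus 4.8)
The existence of a quasiconformal map $\Theta : \C \to \C$ with $\Theta \circ F = G \circ \Theta$ on $V$ and satisfying the periodicity relation \eqref{thetaeq} is exactly \cite[Theorem 3.1]{Rigidity}, so the plan is to recall how $\Theta$ is constructed there and simply read off its behaviour as $\operatorname{Re} z \to -\infty$, this being the only new assertion. In that construction $\Theta$ is obtained as a locally uniform limit of quasiconformal maps $(\Theta_n)_{n \geq 0}$ produced by iterated lifting: one sets $\Theta_0 := \Psi$ and, for each $n$,
\[
\Theta_{n+1} := \begin{cases} \Psi, & \text{on } \C \setminus V, \\ G^{-1} \circ \Theta_n \circ F, & \text{on } V, \end{cases}
\]
where on each tract of $F$ the appropriate conformal branch of $G^{-1}$ is used. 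Because $F$ is of disjoint type, $F$ is uniformly expanding and the branches of $G^{-1}$ uniformly contracting on the tracts, and this is what makes the iteration converge; the equivalence relation $\Psi \circ F = G \circ \Phi$ shows moreover that $\Theta_1 = \Phi$ on $V$.

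The feature I would exploit is that this scheme never alters the map off the tracts: by construction $\Theta_n = \Psi$ on $\C \setminus V$ for every $n$, and hence $\Theta = \Psi$ on $\C \setminus V$. Now the tracts $V$ of $F$ lie in a right half-plane; indeed $\overline{V} \subset H$ since $F$ is of disjoint type, and $\C \setminus H$ contains a left half-plane, so there is $m \in \R$ with $V \subset \{ \operatorname{Re} z > m \}$. Consequently $\{ \operatorname{Re} z \leq m \} \subset \C \setminus V$, and there $\Theta = \Psi$. The claim is then immediate from property (ii) of the quasiconformal equivalence, which gives $\operatorname{Re} \Psi(z) \to -\infty$ as $\operatorname{Re} z \to -\infty$: for every $z$ with $\operatorname{Re} z \leq m$ we have $\operatorname{Re} \Theta(z) = \operatorname{Re} \Psi(z)$, and therefore $\operatorname{Re} \Theta(z) \to -\infty$ as $\operatorname{Re} z \to -\infty$.

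I expect the main obstacle to be purely a matter of bookkeeping against \cite{Rigidity}: one must check that the conjugacy there really is built by modifying one of the equivalence maps only inside the tracts, and that any interpolation needed to render each $\Theta_n$ continuous across $\partial V$ is likewise confined to the tracts, so that $\Theta$ genuinely coincides with $\Psi$ on a whole left half-plane. Once this is confirmed no estimates are needed. As an alternative one could note that $\Theta$ descends to a quasiconformal self-homeomorphism of the cylinder $\C / 2\pi i \mathbb{Z}$ which, fixing the end at $+\infty$, must also fix the end at $-\infty$; but the direct identification of $\Theta$ with $\Psi$ off the tracts is cleaner, since it yields the conclusion without any separate analysis at $+\infty$.
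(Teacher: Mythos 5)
Your overall route is the same as the paper's: the paper gives no self-contained argument either, but quotes \cite[Theorem 3.1]{Rigidity} for the existence of $\Theta$ and for \eqref{thetaeq}, and remarks that the final assertion ``follows easily from the proof'' there. You correctly isolate that final assertion as the only thing to prove, and the mechanism you identify --- that $\Theta$ agrees with $\Psi$ on a left half-plane, after which property (ii) of quasiconformal equivalence finishes --- is the intended one. However, the precise invariant you claim, namely that $\Theta = \Psi$ on all of $\C \setminus V$, is false in general, and the step where you would establish it fails. The gluing you write down is not continuous: for $z \in \partial V$ we have $F(z) \in \partial H$, where $\Theta_n = \Psi$, so the boundary values of $G^{-1} \circ \Theta_n \circ F$ on $\partial V$ are $G^{-1} \circ \Psi \circ F = G^{-1} \circ G \circ \Phi = \Phi$, while your outer definition gives $\Psi$; since $\Phi \neq \Psi$ on $\partial V$ in general, each of your $\Theta_n$ fails to be a homeomorphism. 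The same computation shows that any continuous $\Theta$ which conjugates on all of $V$ and equals $\Psi$ near $\partial H$ must equal $\Phi$, not $\Psi$, on $\partial V$. Consequently the interpolation between $\Phi$ (boundary values on $\partial V$) and $\Psi$ (boundary values on $\partial H$) must be carried out in $H \setminus \overline{V}$, i.e.\ \emph{outside} the tracts --- the opposite of your suggested bookkeeping. Confining the interpolation to a collar inside the tracts is not an option: the functional equation $\Theta \circ F = G \circ \Theta$ would then be destroyed on that collar, and the limit map would conjugate only at points whose entire forward orbit stays in $V$ (essentially $J(F)$), which is weaker than the theorem asserts.

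The repair is straightforward and is presumably what the authors mean by ``follows easily from the proof'': the initial map of the pullback equals $\Phi$ on $\overline{V}$, equals $\Psi$ on $\C \setminus H$, and interpolates in $H \setminus \overline{V}$; all later modifications occur in $V \subset H$. The correct invariant is therefore $\Theta = \Psi$ on $\C \setminus H$, and this still suffices: $\partial H$ is invariant under $z \mapsto z + 2\pi i$, so $\operatorname{Re}$ is bounded on $\partial H$, hence $\C \setminus H$ contains a left half-plane, on which $\operatorname{Re} \Theta = \operatorname{Re} \Psi \to -\infty$ by property (ii). I would also point out that the alternative you dismiss at the end is in fact the more robust argument, precisely because it needs no inspection of the construction in \cite{Rigidity}: by \eqref{thetaeq}, $\Theta$ descends to a self-homeomorphism of the cylinder $\C / 2\pi i \mathbb{Z}$, which extends to the two-point ends compactification and either fixes or swaps the ends. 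It cannot swap them, since $V$ contains points of arbitrarily large real part and the conjugacy forces $\Theta(V) \subset \Phi(V)$, the domain of $G$, which omits a left half-plane (its closure lies in the range $\Psi(H)$ of $G$, whose boundary is $2\pi i$-periodic and so has bounded real part). Fixing the end at $-\infty$ is exactly the statement that $\operatorname{Re} \Theta(z) \to -\infty$ as $\operatorname{Re} z \to -\infty$.
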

%
%
We use Theorem~\ref{lasseconj} to deduce the following.
\begin{proposition}
\label{prop:thereisaconjugacy}
Suppose that $f, g \in \B$ are of disjoint type. Suppose also that $F$ (resp. $G$) are logarithmic transforms of $f$ (resp. $g$) that are quasiconformally equivalent. Then there is a quasiconformal map $\vartheta : \C \to \C$ and a domain $U$ with $J(f) \subset U$ and $J(g) \subset \vartheta(U)$, such that $\vartheta \circ f = g \circ \vartheta$ in $U$.
\end{proposition}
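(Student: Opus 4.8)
The plan is to transfer the conjugacy $\Theta$ between the logarithmic transforms $F$ and $G$, provided by Theorem~\ref{lasseconj}, down to a conjugacy $\vartheta$ between $f$ and $g$ via the exponential map. Recall that $\exp \circ F = f \circ \exp$ and $\exp \circ G = g \circ \exp$, so the two levels are linked by $\pi := \exp$. The key structural feature we exploit is the periodicity relation~\eqref{thetaeq}, namely $\Theta(z + 2\pi i) = \Theta(z) + 2\pi i$: this is precisely the compatibility condition needed for $\Theta$ to descend through $\exp$ to a well-defined map on a neighbourhood of $J(f)$.

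First I would apply Theorem~\ref{lasseconj} to obtain the quasiconformal map $\Theta : \C \to \C$ satisfying $\Theta \circ F = G \circ \Theta$ on $V$, the $2\pi i$-periodicity \eqref{thetaeq}, and $\operatorname{Re}\Theta(z)\to-\infty$ as $\operatorname{Re}z\to-\infty$. Next I would define $\vartheta$ on a suitable right half-plane (or on $\exp$ of a neighbourhood of $\overline{\mathcal{T}}$) by the formula $\vartheta(\exp w) := \exp(\Theta(w))$. The periodicity relation \eqref{thetaeq} guarantees that this is well defined: if $\exp w_1 = \exp w_2$, then $w_1 - w_2 \in 2\pi i \Z$, whence $\Theta(w_1) - \Theta(w_2) \in 2\pi i \Z$ as well, so $\exp(\Theta(w_1)) = \exp(\Theta(w_2))$. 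Since $\exp$ is a local homeomorphism and $\Theta$ is quasiconformal, $\vartheta$ is locally quasiconformal, and by a standard argument (using that $\exp$ is a covering of $\C^*$ and that $\Theta$ respects the deck group of this covering) $\vartheta$ extends to a quasiconformal self-map of the relevant region. I would take $U$ to be $\exp$ of an open neighbourhood of $\overline{\mathcal{T}}$ containing $J(F)$, so that $J(f) = \exp J(F) \subset U$; one then checks $J(g) = \exp J(G) \subset \vartheta(U)$, using that $\Theta(J(F)) = J(G)$.

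The conjugacy relation follows by a direct computation once $\vartheta$ is defined: for $z = \exp w \in U$ we have
\[
\vartheta(f(z)) = \vartheta(f(\exp w)) = \vartheta(\exp F(w)) = \exp(\Theta(F(w))) = \exp(G(\Theta(w))) = g(\exp \Theta(w)) = g(\vartheta(z)),
\]
where the middle equality uses $\Theta \circ F = G \circ \Theta$ and the outer equalities use the semiconjugacies with $\exp$. This holds wherever $f(z)$ stays in the domain where $\vartheta$ is defined, which is arranged by the choice of $U$.

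The main obstacle I anticipate is not the formal conjugacy identity, which is routine, but rather establishing that $\vartheta$ is a genuine quasiconformal homeomorphism of $\C$ (or extends to one) rather than merely a locally defined map. Concretely, one must verify that $\vartheta$ is injective and that it is defined and proper on a full neighbourhood of $J(f)$, which requires care about the behaviour of $\Theta$ near $\operatorname{Re} z \to -\infty$ (where $\exp w \to 0$) and about patching the local inverses of $\exp$ consistently. The condition $\operatorname{Re}\Theta(z) \to -\infty$ as $\operatorname{Re} z \to -\infty$ in Theorem~\ref{lasseconj} is exactly what is needed to control $\vartheta$ near the origin and to ensure it extends quasiconformally across a neighbourhood of $0$; handling this extension carefully, and confirming that the resulting dilatation is bounded, is where the real work lies.
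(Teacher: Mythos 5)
Your proposal is correct and takes essentially the same approach as the paper: the paper defines $\vartheta = \exp \circ\, \Theta \circ \log$ on $\C\setminus\{0\}$ with $\vartheta(0)=0$ (well defined by \eqref{thetaeq}), and takes $U = \exp V$, where $V$ is the domain of $F$. The only small corrections are that $U$ should be $\exp V$ itself rather than $\exp$ of a neighbourhood of $\overline{\mathcal{T}}$, since your conjugacy computation uses $F(w)$ and hence only holds where $F$ is defined (this suffices because $J(F)\subset\mathcal{T}$ for disjoint-type maps), and that since $\Theta$ is already a quasiconformal self-map of $\C$, one can define $\vartheta$ globally on $\C\setminus\{0\}$ at once, so the only genuine extension issue is continuity and removability at $0$ -- exactly the point you identify, handled by $\operatorname{Re}\Theta(z)\to-\infty$ as $\operatorname{Re}z\to-\infty$.
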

\begin{proof}
Let $\Theta$ be the quasiconformal map from Theorem~\ref{lasseconj}. Set 
\[\vartheta(z) = 
\begin{cases}
(\exp \circ \ \Theta \circ \log)(z), &\text{for } z \ne 0 \\
0, &\text{for } z = 0.
\end{cases}
\]
This is well-defined by \eqref{thetaeq}. Set $U = \exp V$, where $V$ is the domain of $F$. It can be seen that $\vartheta$ has the necessary properties.
\end{proof}
It is useful for later in the paper to define another subset of the escaping set of a {\tef}. For a {\tef} $f$, and $R>0$, we define
\begin{equation}
\label{Xdef}
X(f) := \{z \in\C : \exists \ \ell \in \N \text{ such that } |f^{n+\ell}(z)| \geq \exp^n (R) \text{ for } n\in\N\}.
\end{equation}
It can be shown that this definition is independent of the choice of $R>0$. A set similar to $X(f)$ (denoted by $B(f)$) was used in \cite[Proof of Theorem 1.2]{bergweiler2017}. For some functions, including all functions of finite order, we have $X(f)=A(f)$. This is not the case in general; unlike $A(f)$, the set $X(f)$ can be empty. It is not hard to see that $X(f) \neq \emptyset$, for $f \in \B$.

We require the following, which is slightly more general than is required, and may be of independent interest.
\begin{theorem}
\label{theo:implicationsofaconjugacy}
Suppose that $f, g$ are {\tef}s, and that the map $\vartheta : \C \to \C$ is a homeomorphism. Suppose that $\vartheta \circ f = g \circ \vartheta$ in a domain $U$ such that $J(f) \cup I(f) \subset U$ and $J(g) \cup I(g) \subset \vartheta(U)$. Then 
\[
\vartheta(J(f)) = J(g), \quad \vartheta(I(f))=I(g), \quad\text{ and }\quad \vartheta(A(f)) = A(g).
\]
If, in addition, $\vartheta$ is quasiconformal, then $\vartheta(X(f))=X(g)$. 
\end{theorem}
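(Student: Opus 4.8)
The plan is to propagate the conjugacy along orbits and then handle each of the four sets in turn: the first two are characterised purely dynamically, the third through the fast-escaping levels, and the fourth through the metric distortion of a quasiconformal map.

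\emph{Setup, the point at infinity, and the sets $I$ and $J$.} First I would note that each of $J(f), I(f), A(f)$ and $X(f)$ is forward invariant and contained in $J(f)\cup I(f)\subset U$; hence for any point $z$ of one of these sets the entire forward orbit lies in $U$, and a trivial induction on $\vartheta\circ f=g\circ\vartheta$ gives $g^n(\vartheta(z))=\vartheta(f^n(z))$ for all $n\in\N$. Since $\vartheta:\C\to\C$ is a homeomorphism it is proper, so it extends to a homeomorphism of $\widehat{\C}$ fixing $\infty$, and the same holds for $\vartheta^{-1}$; in particular $\vartheta(w)\to\infty$ exactly when $w\to\infty$. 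The hypotheses are symmetric, in that $\vartheta^{-1}$ conjugates $g$ to $f$ on $\vartheta(U)\supset J(g)\cup I(g)$ with $\vartheta^{-1}(J(g)\cup I(g))\subset U$, so each inclusion I prove for $\vartheta$ has a counterpart for $\vartheta^{-1}$. For $z\in I(f)$ the orbit conjugacy gives $g^n(\vartheta(z))=\vartheta(f^n(z))\to\infty$, so $\vartheta(z)\in I(g)$; the symmetric statement yields $\vartheta(I(f))=I(g)$. For the Julia set I would invoke Eremenko's theorem that $J(f)=\partial I(f)$, with boundary taken in $\C$; as a self-homeomorphism of $\C$ carries boundaries to boundaries,
\[
\vartheta(J(f))=\vartheta(\partial I(f))=\partial\bigl(\vartheta(I(f))\bigr)=\partial I(g)=J(g).
\]

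\emph{The fast escaping set, and the main obstacle.} Here I would use the $R$-independence of $A$ \cite[Theorem 2.2]{Fast} to write $A(f)=\bigcup_{\ell\ge 0}f^{-\ell}(A_R(f))$, where $A_R(f)=\{z:|f^n(z)|\ge M^n(R,f)\ \text{for all }n\}$, and reduce the claim, via the orbit conjugacy, to comparing the defining inequalities: I must show that $|f^n(z)|\ge M^n(R_f,f)$ for all $n$ implies $|\vartheta(f^n(z))|\ge M^n(R_g,g)$ for all $n$ after a bounded index shift, for suitable radii $R_f,R_g$, and conversely for $\vartheta^{-1}$. \textbf{This comparison is the heart of the proof and its main difficulty.} The obstruction is that $A$ is defined through the metric quantity $M^n(\cdot,f)$, while a bare homeomorphism may distort moduli arbitrarily, so one cannot bound $|\vartheta(w)|$ below by a fixed function of $|w|$. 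What rescues the argument is that $A$ is normalised intrinsically, by each function's own maximum-modulus iteration: both $M^n(\cdot,f)$ and $M^n(\cdot,g)$ are realised, up to a bounded shift, by genuine fast orbits, and such orbits lie in $I(f)\subset U$ and $I(g)\subset\vartheta(U)$, where the conjugacy is available. I would therefore calibrate the two iterated maximum moduli against each other by transporting fast orbits through $\vartheta$ and $\vartheta^{-1}$ — producing the required orbits with Lemma~\ref{RSlemma} — and absorb the distortion of $\vartheta$ into the freedom in the choice of $R$ and $\ell$.

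\emph{The set $X$ under a quasiconformal map.} Finally, when $\vartheta$ is quasiconformal I would use the standard distortion estimate that there are constants $K\ge 1$ and $C>0$ with
\[
\tfrac{1}{C}\,|w|^{1/K}\le |\vartheta(w)|\le C\,|w|^{K}, \qfor |w| \text{ large.}
\]
For $z\in X(f)$ with $|f^{n+\ell}(z)|\ge\exp^n(R)$ this gives $|\vartheta(f^{n+\ell}(z))|\ge \tfrac1C(\exp^n(R))^{1/K}$, and since one iterate of the exponential absorbs both the power $1/K$ and the constant $1/C$, the right-hand side exceeds $\exp^n(R')$ once the index is shifted by a fixed amount; the $R$-independence of $X$ and the symmetric argument then give $\vartheta(X(f))=X(g)$. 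It is precisely the polynomial (rather than, say, iterated-logarithmic) control of $|\vartheta|$ afforded by quasiconformality that makes this step work, which is why the hypothesis cannot be relaxed to a mere homeomorphism here — in contrast to the intrinsically normalised set $A$.
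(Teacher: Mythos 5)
Your treatment of $J$, $I$, and $X$ is correct and essentially matches the paper: the orbit conjugacy handles $I(f)$, Eremenko's theorem $J(f)=\partial I(f)$ handles $J(f)$ (the paper dismisses these two as ``easily shown''), and for $X(f)$ the paper argues exactly as you do, using the H\"older estimate $|\vartheta(z)|>|z|^s$ for large $|z|$ and letting one iterate of the exponential absorb the power. The genuine gap is in the one step you yourself flag as the heart of the matter: the fast escaping set. The proposed ``calibration'' of $M^n(\cdot,f)$ against $M^n(\cdot,g)$ by transporting fast orbits cannot be completed. A plane homeomorphism need not preserve the ordering of moduli, so from $|f^{n+\ell}(z)|\geq M^n(R,f)$ you can conclude nothing about how $|\vartheta(f^{n+\ell}(z))|=|g^{n+\ell}(\vartheta(z))|$ compares with $M^n(R',g)$: the modulus-transfer function $m(r):=\min_{|z|=r}|\vartheta(z)|$ tends to infinity but at an arbitrarily slow, uncontrollable rate, and this loss cannot be absorbed into a finite shift of $\ell$ or a change of $R$. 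Nor does it help to use a yardstick orbit $\zeta$ with $M^n(R,f)\le|f^n(\zeta)|\le M^{n+1}(R,f)$ (obtained from Lemma~\ref{RSlemma}): to know that the transported orbit $g^n(\vartheta(\zeta))=\vartheta(f^n(\zeta))$ is fast \emph{for $g$} is precisely the statement being proved, so the calibration is circular.

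The missing idea --- and the paper's actual proof --- is that $A(f)$ admits a purely topological characterisation, namely \cite[Corollary 2.7]{Fast}:
\[
A(f) = \{w \in \C : \exists\, \ell \in \N \text{ such that } f^{n+\ell}(w) \notin T(f^n(D)), \text{ for } n \in \N\},
\]
where $D$ is any domain meeting $J(f)$ and $T(V)$ denotes the union of $V$ with all bounded components of its complement. Since every homeomorphism $\psi:\C\to\C$ satisfies $\psi(T(V))=T(\psi(V))$ (it induces a bijection between bounded complementary components), and since $D$ may be chosen inside $U$ with $\vartheta(D)$ meeting $J(g)$ (using $\vartheta(J(f))=J(g)$, already established), this characterisation transports verbatim through the conjugacy, giving $\vartheta(A(f))\subset A(g)$ and, by the symmetry of the hypotheses, equality. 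This reformulation is what replaces all metric comparison; your intuition that $A$ is ``intrinsically normalised'' is the right one, but it becomes a proof only through this topological description, not through inequalities on maximum moduli. (The paper also takes the mild but necessary care, as in your setup paragraph, that the conjugacy holds only on $U$: forward invariance keeps every relevant orbit inside $U$, so the characterisation can be read off there.)
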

\begin{remark}\normalfont
Even with $U = \C$, this result does not appear to have been used in the literature.
\end{remark}
\begin{proof}[Proof of Theorem~\ref{theo:implicationsofaconjugacy}]
First, suppose that $\vartheta$ is a homeomorphism. The facts that $\vartheta(J(f)) = J(g)$ and $\vartheta(I(f))=I(g)$ are easily shown.

To show that $\vartheta(A(f)) = A(g)$, we use the fact from \cite[Corollary 2.7]{Fast} that 
\begin{equation}
\label{newAdef}
A(f) := \{w \in \C : \exists \ \ell \in \N \text{ such that } f^{n+\ell}(w) \notin T(f^n(D)), \text{ for } n \in \N\},
\end{equation}
where $D$ is a domain that meets $J(f)$. Here, if $V \subset \C$, we use $T(V)$ to denote the union of $V$ with all bounded components of the complement of $V$.

Because of the symmetry of the hypotheses, it is only necessary to prove that $\vartheta(A(f)) \subset A(g)$. Choose $w \in A(f)$, and let $z = \vartheta(w)$. Let $D \subset U$ be a domain that meets $J(f)$. Since $\vartheta(J(f)) = J(g)$, it follows that $D' := \vartheta(D)$ meets $J(g)$. 

Then, by \eqref{newAdef}, there exists $\ell \in \N$ such that 
\begin{equation*}
f^{n+\ell}(w) \in U \setminus T(f^n(D)), \qfor n \in \N. 
\end{equation*}
We deduce that
\begin{equation}
\label{finA}
\vartheta^{-1}(g^{n+\ell}(z)) \in U \setminus T(\vartheta^{-1}(g^n(D'))), \qfor n \in \N. 
\end{equation}

It can be seen that if $\psi : \C \to \C$ is a homeomorphism, and $V \subset \C$ is a domain, then $\psi(T(V)) = T(\psi(V))$; this follows immediately from the fact that $\psi$ induces a bijection between the set of bounded complementary components of $V$ and the set of bounded complementary components of $\psi(V)$.

It then follows from \eqref{finA} that
\begin{equation*}
g^{n+\ell}(z) \in \vartheta(U) \setminus T(g^n(D')), \qfor n \in \N. 
\end{equation*}
We can deduce from this that $z \in A(g)$ as required. 

For the final part of the proposition, suppose that $\vartheta$ is quasiconformal. It follows from the fact that $\vartheta$ is quasiconformal, and so H\"{o}lder continuous, that there exist $R_0>0$ and $s>0$ such that
\begin{equation}
\label{growtheq}
|\vartheta(z)| > |z|^s, \qfor |z| > R_0.
\end{equation}

Because of the symmetry of the hypotheses, it is only necessary to prove that $\vartheta(X(f)) \subset X(g)$. Choose $w \in X(f)$, and let $z = \vartheta(w)$. Choose $R>R_0$, and also $R' > R_0$ sufficiently large that $(\exp^n(R'))^s \geq \exp^n(R)$, for $n \in \N$.

Then, by definition, there exists $\ell \in \N$ such that 
\begin{equation}
\label{finAexp}
|f^{n+\ell}(w)| \geq \exp^n(R'), \qfor n \in \N. 
\end{equation}
It then follows by \eqref{growtheq} and \eqref{finAexp} that
\begin{align*}
|g^{n+\ell}(z)|    &=    |\vartheta(f^{n+\ell}(w))|, \\
                   &\geq |f^{n+\ell}(w)|^s, \\
                   &\geq (\exp^{n}(R'))^s, \\
							     &\geq \exp^n(R).
\end{align*}

It is easy to deduce from this that $z \in X(g)$ as required.
\end{proof}
%
%
%
\section{Proof of Theorem~\ref{theo:disjoint-type}}
\label{main proof_sec}
In this section we prove Theorem \ref{theo:disjoint-type}. We only consider the case where $f \in \B$ is of disjoint type and finite order. The proof in the case that $f$ is a composition of such functions uses techniques similar to those used in \cite{DevHairs}, and is omitted for reasons of simplicity.

Suppose then that $f \in \B$ is of disjoint type and finite order. Let $F$ be a logarithmic transform of $f$. We begin by fixing a function in $\Blog^n$ which is quasiconformally equivalent to $F$. To do this, first choose
$L > 1$ sufficiently large so that $g(z) := f(z)/L$ is of disjoint type, and also is such that  
\begin{equation}
\label{niceeq}
S(g) \cup \{g(0)\} \subset \D \subset \{ z : |z| \leq e^2 \} \subset F(g).
\end{equation}

Let $G$ be the logarithmic transform of $g$ which is a conformal map from each component of $\mathcal{T} := G^{-1}(\H)$ to $\H$. It follows from  \cite[Lemma 1]{EandL} that there exists $R_0>0$ such that $|G'(z)| \geq 2$, for all $z\in \mathcal{T}$ such that $\operatorname{Re}G(z) \geq R_0$. Replacing $g$ with the function $z \to e^{-R_0}g(z)$ is equivalent to replacing $G$ with the function $z \to G(z) - R_0$. It follows that, by choosing a larger value of $L$ if necessary, we can assume both that \eqref{niceeq} holds, and that $G$ is normalised.

The following lemma is central to the proof of our theorem.
\begin{lemma}
\label{lemma:Gisnice}
Let $G \in \Blog^n$ be as above. Then every point in $\meanderingendpoints(G)$ can be separated from infinity by a continuum $\gamma \subset \meanderingendpoints(G)^c = A(G) \cup J(G)^c$.
\end{lemma}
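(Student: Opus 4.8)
The plan is to enclose $w_0$ inside a bounded Jordan curve $\gamma$ that meets $J(G)$ only in points of $A(G)$. The lemma already records that $\meanderingendpoints(G)^c = A(G) \cup J(G)^c$, and since $\meanderingendpoints(G) = J(G) \setminus A(G)$ consists \emph{entirely of endpoints}, every non-endpoint (``interior'') point of a hair automatically lies in $A(G)$. So it suffices to produce a bounded Jordan curve $\gamma$ surrounding $w_0$ that meets $J(G)$ only at interior hair points and lies in $J(G)^c$ elsewhere: such a $\gamma$ separates $w_0$ from $\infty$ and avoids $\meanderingendpoints(G)$. Note first that $w_0$ lies in the interior of its tract $T$: since $G : T \to \H$ extends continuously with $G(\partial T) \subset \{\operatorname{Re} = 0\}$, and $\{\operatorname{Re} = 0\}$ is disjoint from $\overline{\mathcal T} \subset \H$, no point of $\partial T$ can keep its orbit in the tracts, so $\partial T \subset J(G)^c$ and hence $w_0 \notin \partial T$.

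The natural first attempt is to take $\gamma = \partial \Omega$, where $\Omega$ is the bounded component of $T \setminus L_{w_0,a}$ containing $w_0$ and $L_{w_0,a}$ is the cross-cut of $T$ at a large real part $a$ separating $w_0$ from $\infty$. Then $\gamma$ is made up of two sub-arcs of $\partial T$, which lie in $J(G)^c$, together with $L_{w_0,a}$, and it remains to arrange that $L_{w_0,a}$ meets $J(G)$ only in $A(G)$. Here the finite-order hypothesis gives $T$ uniformly bounded slope and wiggling (by \cite[Theorem 5.6]{RRRS}), so the growth estimate Lemma~\ref{growlemma} is available: it shows that a point of $\overline T$ whose forward orbit stays suitably far to the right escapes at least as fast as the maximum-modulus iterates, and hence lies in $A(G)$. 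This is the mechanism that should certify the interior crossings as fast escaping.

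The main obstacle is that a cross-cut $L_{w_0,a}$ can pass through a \emph{meandering} endpoint, and, crucially, such endpoints can occur at arbitrarily large real part: an endpoint lying close to $\partial T$ has small image real part, so its orbit may fail to keep pace with $M^n(R,G)$ no matter how large $\operatorname{Re}$ is. Thus no single level $a$ need work, and the bad levels may even fill an interval, so I would abandon the constant-real-part cross-cut and instead cross each offending hair at an interior point. The clean way to organise this is to pass to the \emph{straight brush} model. By Theorem~\ref{theo:disjoint-type-context} and \cite{brushing}, $J(G)$ is a Cantor bouquet, so there is an ambient homeomorphism $H$ of the plane carrying a straight brush $B$ onto $J(G)$; being a homeomorphism respecting the arc structure, $H$ maps endpoints of $B$ to endpoints of $J(G)$, interior hair points to interior hair points, and the complement of $B$ to $J(G)^c$. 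Writing $w_0 = H(e)$ for the left endpoint $e$ of a hair of $B$, one draws around $e$ an explicit small rectangle $\mathcal R$ whose horizontal sides lie at heights carrying no hair (hence in the complement of $B$) and whose vertical sides meet $B$ only at interior hair points; this is elementary in the model. Then $\gamma := H(\partial \mathcal R)$ is a Jordan curve around $w_0$ meeting $J(G)$ only in interior hair points, so $\gamma \subset A(G) \cup J(G)^c$ as required. The crux is exactly the matching of the purely topological trichotomy of the bouquet supplied by $H$ with the dynamical splitting into $A(G)$ and $\meanderingendpoints(G)$: this is precisely what $\meanderingendpoints(G) = J(G) \setminus A(G)$ delivers, while Lemma~\ref{growlemma} (and, if one prefers an explicit construction inside the tract to the model, the covering Lemma~\ref{RSlemma}) serves to guarantee that the interior crossings are genuinely fast escaping.
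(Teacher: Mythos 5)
Your reduction is logically valid but aims at an impossible target, and this is where the proof collapses. You propose to enclose $w_0$ by a Jordan curve meeting $J(G)$ only at \emph{interior} (non-endpoint) hair points, i.e.\ by a curve disjoint from the entire endpoint set $\endpoints(G)$. No such curve exists around \emph{any} endpoint of \emph{any} Cantor bouquet: as the paper recalls in its introduction, Mayer's theorem holds for all Cantor bouquets, so $\endpoints(G)\cup\{\infty\}$ is \emph{connected} (indeed, even $\escapingendpoints(G)\cup\{\infty\}$ is connected). If a Jordan curve $\gamma$ with $\gamma\cap \endpoints(G)=\emptyset$ surrounded the endpoint $w_0$, then the interior and exterior of $\gamma$ would split $\endpoints(G)\cup\{\infty\}$ into two nonempty, relatively open sets, one containing $w_0$ and the other containing $\infty$ --- a contradiction. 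The same obstruction, transported through the ambient homeomorphism $H$, kills the ``elementary'' rectangle in the straight brush model: for any endpoint $(t_y,y)$ of a straight brush and any hairless heights $y_1<y<y_2$, \emph{every} $x_2>t_y$ must be the abscissa of some endpoint at a height in $[y_1,y_2]$ (otherwise the rectangle $[-1,x_2]\times[y_1,y_2]$ would have boundary avoiding all endpoints, contradicting connectedness). So the step you describe as elementary is precisely the false statement, and it cannot be repaired by choosing the curve more cleverly.

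What your proposal misses is that the separating continuum \emph{must} pass through endpoints, and the lemma is only true because $A(G)$ contains many \emph{fast-escaping endpoints}: the curve need only avoid $\meanderingendpoints(G)$, not $\endpoints(G)$. This forces a dynamical, not purely topological, construction, which is what the paper does: it defines the closed set $B:=\bigcap_{k\geq 0}\{z:\operatorname{Re}G^k(z)\geq\tau^k(R')\text{ or }G^k(z)\notin\mathcal{T}\}$ with $\tau(r)=\exp(\epsilon r)$, proves that every component of $B^c$ is bounded (by an induction on curves with three marked points, applying Lemma~\ref{growlemma} twice per step and concluding with Lemma~\ref{RSlemma}), and uses the finite-order hypothesis to get $\tau^n(R')\geq M^n(r_0,G)$, whence $B\cap J(G)\subset A(G)$. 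The separating continuum is then $\partial X$, where $X$ is the bounded component of $B^c$ containing $w_0$; it lies in $A(G)\cup J(G)^c$ but inevitably contains fast-escaping endpoints, which is permitted. Your Lemma~\ref{growlemma}/Lemma~\ref{RSlemma} remarks gesture at the right tools, but nothing in the proposal supplies this construction, and the route you chose instead is blocked by a theorem quoted in the paper itself.
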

\begin{remarks}\normalfont
\mbox{ }
\begin{enumerate}[(a)]
\item Recall that the fact that $\gamma \subset A(G) \cup J(G)^c$ separates a point $z \in \meanderingendpoints(G)$ from infinity means that $z$ and infinity are separated in $\meanderingendpoints(G) \cup \{\infty\}$ (see Definition~\ref{def:set-sep}).

\item In fact, the stronger conclusion that every point in $\C$ can be separated from infinity by a continuum in $A_R(G) \cup J(G)^c$ can be proved, where $A_R(G)$ is a so-called \emph{level} set; see \cite{Fast}. We do not require this additional detail.
\end{enumerate}
\end{remarks}
\begin{proof}[Proof of Lemma~\ref{lemma:Gisnice}]
The fact that $\meanderingendpoints(G)^c = A(G) \cup J(G)^c$ is a consequence of the fact that $G$ is of disjoint-type, together with Theorem~\ref{theo:disjoint-type-context} (applied to $g$).

Since the tracts of $G$ have uniformly bounded slope, equation \eqref{e1} holds, for some $\alpha, \beta>0$ which depend only on $G$, whenever the points $G(w)$ and $G(z)$ lie in the same tract of $G$. Let $\epsilon>0$ and $M>0$ be the constants from Lemma~\ref{growlemma} for these values of $\alpha$ and $\beta$, and with $K=2$. We also define the function
\begin{equation}
\label{taudef}
\tau(r) := \exp(\epsilon r).
\end{equation}

Let $T_0, T_1, \ldots$ be the tracts of $G$. 
%
Choose 
\[
R' > \max\left\{\frac{\log 10}{9\epsilon}, 4M\right\}.
\]

Since $G$ is of finite order, it is easy to see that there exist $\delta>0$ and $r_0>0$ such that
\[
r < M(r, G) \leq e^{\delta r}, \qfor r \geq r_0.
\]
It then follows from \cite[Lemma 3.4]{DevHairs} that, increasing $R'$ if necessary, we can assume that
\begin{equation}
\label{bigenough}
\tau^n(R') \geq M^n(r_0, G), \qfor n\in \N.
\end{equation}

For simplicity, set $G(z) = 0$, for $z \notin \mathcal{T}$, so that $G(\C\setminus\mathcal{T}) \subset \C\setminus\mathcal{T}$. Define
\[
B := \bigcap_{k \geq 0} \{ z \in \C : \operatorname{Re} G^k(z) \geq \tau^k(R') \text{ or } G^k(z) \notin \mathcal{T} \}.
\]
%
Note that $B$ is closed, and that $B^c \subset \mathcal{T}$. 

We shall prove that all components of the complement of $B$ are bounded. Suppose, by way of contradiction, that $B^c$ has an unbounded component, say $\Gamma$. Note that $\Gamma \subset \mathcal{T}$, and so there is a tract $T_0$ of $G$ such that $\Gamma \subset T_0$.
 
Since $\Gamma$ is unbounded and open, there exist points $z_0', z_0'', z_0''' \in T_0$ and a curve $\gamma_0 \subset \Gamma$ joining $z_0', z_0''$ and $z_0'''$ such that the following all hold. 
\begin{itemize}
\item $\operatorname{Re} z_0' = a_0 > R'$.
\item $\operatorname{Re} z_0'' = 3a_0 + M$.
\item $\operatorname{Re} z_0''' = 9a_0 + 4M$.
\item $\gamma_0 \subset \{ z \in T_0 : \operatorname{Re} z \in [a_0, 9a_0 + 4M]\}$.
\end{itemize}

We next use induction to prove that there is a sequence of curves $(\gamma_k)_{k\geq 0}$ such that the following all hold, for $k \geq 0$.
\begin{enumerate}[(a)]
\item $\gamma_{k+1} \subset G(\gamma_k)$. \label{propa}
\item $\gamma_k$ joins points  $z_k', z_k''$ and $z_k'''$. \label{propb}
\item $\operatorname{Re} z_k' = a_k > \tau^k(R')$. \label{propc}
\item $\operatorname{Re} z_k'' = 3a_k + M$. \label{propd}
\item $\operatorname{Re} z_k''' = 9a_k + 4M$. \label{prope}
\item There is a tract $T_k$ such that $\gamma_k \subset \{ z \in T_k : \operatorname{Re} z \in [a_k, 9a_k + 4M]\}$. \label{propf}
\end{enumerate}

We need to prove that if this claim holds for all $0 \leq j \leq k$, then it also holds for $j = k+1$. Assume that curves with properties \eqref{propa}--\eqref{propf} have been constructed for $0 \leq j \leq k$. Note that it follows from \eqref{propa} and \eqref{propc} that if $\gamma'_k$ is the component of $G^{-k}(\gamma_k)$ contained in $\gamma_0$, then $\operatorname{Re} G^n(z) > \tau^n(R')$, for $z \in \gamma'_k$ and $0 \leq n \leq k$. Since $\gamma_0 \cap B = \emptyset$, it follows by \eqref{propc} that $G(\gamma_k) \subset \mathcal{T}$ and so, in particular, there is a tract $T_{k+1}$ such that $G(\gamma_k) \subset T_{k+1}$. By \eqref{niceeq} we have $\operatorname{Re} G(z)  > 2$, for $z \in \gamma_k$.

It follows from Lemma~\ref{growlemma}, with $K=2$ and $z = z_k'$, that 
\[
\operatorname{Re} G(w) > 2 \exp(\epsilon \operatorname{Re} w) > \tau(\operatorname{Re} w ), \qfor w \in \gamma_k \cap \{ z : \operatorname{Re} z \geq 3a_k + M\}.
\]

A second application of Lemma~\ref{growlemma}, with $K=2$, $w = z_k'''$, and $z = z_k''$, gives, by \eqref{propc} and the choice of $R'$, that
\begin{align*}
\operatorname{Re} G(z_k''') &> \exp(\epsilon(9a_k + 4M)) \operatorname{Re} G(z_k'') \\
                             &> 10 \operatorname{Re} G(z_k'') \\
														 &> 9 \operatorname{Re} G(z_k'') + 4M.
\end{align*}

Our claim then follows immediately. \\

By Lemma~\ref{RSlemma}, there exists $\zeta \in \gamma_0$ such that $\operatorname{Re} G^n(\zeta) \geq \tau^n(R')$, for $n \in \N$. This implies that $\zeta \in B$, which is a contradiction. This completes the proof that all components of the complement of $B$ are bounded. \\

We next claim that $B \subset A(G) \cup J(G)^c$. 
For, if $z \in B \cap J(G)$, then it follows from \eqref{bigenough}, together with the definition of $B$, that 
\[
\operatorname{Re} G^n(z) \geq \tau^n(R') \geq M^n(r_0, G), \qfor n \in \N,
\]
and so $z \in A(G)$ as required. \\

Finally, suppose that $z \in \meanderingendpoints(G)$. Since $\meanderingendpoints(G) = J(G) \setminus A(G) \subset B^c$, 
we can let $X$ be the component of $B^c$ containing $z$. We know that this set is bounded. Since $B$ is closed, it follows that $\partial X$ is a continuum in $B \subset A(G) \cup J(G)^c$ which separates $z$ from infinity, as required.
\end{proof}
\begin{proof}[Proof of Theorem~\ref{theo:disjoint-type} in the case that $f$ is a finite order function of disjoint type]
Let $F$ be a logarithmic transform of $f$, and let $G$ be as above. Then $F$ and $G$ are both of disjoint type, and are clearly quasiconformally equivalent. Let $\vartheta$ be the quasiconformal map that results from Proposition~\ref{prop:thereisaconjugacy}.

First we claim that every point in $\meanderingendpoints(f)$ can be separated from infinity by a continuum $\gamma \subset A(f) \cup F(f)$. For, suppose that $z \in \meanderingendpoints(f)$. It follows from Theorem~\ref{theo:implicationsofaconjugacy} that $\vartheta(z) \in \meanderingendpoints(g)$. Let $w$ be such that $\exp w = \vartheta(z)$ (recall that $0 = \vartheta(0) \in F(g)$). Since $G$ is of disjoint type, $w$ lies in $\meanderingendpoints(G)$. It follows by Lemma~\ref{lemma:Gisnice} that $w$ can be separated from infinity by a continuum $\gamma' \subset A(G) \cup J(G)^c$. Then $\gamma = \vartheta^{-1}(\exp \gamma')$ has the required properties. This completes the proof of the claim. 

It remains to prove that $\meanderingendpoints(f) \cup\{\infty\}$ is totally separated. Since, by Theorem~\ref{theo:disjoint-type-context}, $J(f)$ is Cantor bouquet,  the set $\endpoints(f)$ is totally separated. Moreover, it follows from the above that each point of $\meanderingendpoints(f)$ is separated from infinity in $\meanderingendpoints(f) \cup \{\infty\}$. The result then follows by Lemma~\ref{lemm:LasseVasso}.
%
\end{proof}
%
%
%
%
\section{Functions which may be of infinite order}
\label{infinite_sec}
This section concerns functions, which may be of infinite order, for which the tracts of a logarithmic transform have certain geometric properties that allow us to obtain similar results on the endpoints. We first show that the endpoints are defined in the same way as before, by proving Theorem~\ref{theo:disjoint-type-inf-order-context}.

\begin{proof}[Proof of Theorem~\ref{theo:disjoint-type-inf-order-context}]
Let $F$ be a logarithmic transform of $f$. For some $L > 0$, let $g(z) := f(z) / L$, and let $G$ be a logarithmic transform of $g$. By choosing $L$ sufficiently large, we can assume that $G$ is of disjoint type, is normalised, and its tracts have uniformly bounded slope and uniformly bounded wiggling. It can then be deduced from \cite[Corollary 6.3]{brushing} that $J(G)$ is a Cantor bouquet.  

Moreover, \cite[Theorem 4.7]{RRRS} gives that each connected component of $J(G)$ is a closed arc to infinity all of whose points except possibly the finite endpoint lie in the escaping set. Since $g$ is of disjoint type these properties also hold for $J(g)$. It follows by Proposition~\ref{prop:thereisaconjugacy} and Theorem~\ref{theo:implicationsofaconjugacy} that the same is true of $J(f)$. 

We now assume that $f$ has one tract and a logarithmic transform of f has tracts of bounded slope, bounded wiggling and
bounded gulfs, and so the same is true for $g$. It follows by \cite[Remark 3 after Theorem 5.2]{DevHairs} that every point $z \in I(G)$ can be connected to infinity by a curve $\gamma \in I(G)$ such that $\gamma\setminus\{z\} \subset A(G)$. 
We can deduce that each curve in $J(f)$, except possibly its endpoint, lies in the fast escaping set.
\end{proof}


In order to prove the first part of Theorem \ref{theo:disjoint-type-inf-order} we need the following result. Recall the definition of the set $X(f)$ in {\eqref{Xdef}
\begin{equation*}
X(f) := \{z \in\C : \text{ there exists } \ell \in \N \text{ such that } |f^{n+\ell}(z)| \geq \exp^n (R) \text{ for } n\in\N\}.
\end{equation*}
We have the following. 
\begin{lemma}
\label{lemm:otherinf}
Suppose that $f$ is a disjoint-type function, and the tracts of a transform of $f$ have uniformly bounded slope and uniformly bounded wiggling. Then $(\endpoints(f) \setminus X(f)) \cup \{\infty\}$ is totally separated.
\end{lemma}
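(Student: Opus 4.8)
The plan is to mirror the structure of the proof of Lemma~\ref{lemma:Gisnice}, replacing the fast-escaping threshold $\tau^n(R')$ by the iterated-exponential threshold that defines $X(f)$, and then to transfer the conclusion from a normalised logarithmic transform $G$ back to $f$ via the conjugacy of Proposition~\ref{prop:thereisaconjugacy} together with Theorem~\ref{theo:implicationsofaconjugacy}. First I would pass to a rescaled function $g(z) := f(z)/L$ and a logarithmic transform $G \in \Blog^n$ which is of disjoint type, normalised, and whose tracts have uniformly bounded slope and uniformly bounded wiggling; since the tracts have uniformly bounded slope, the inequality \eqref{e1} is available whenever $G(w)$ and $G(z)$ lie in a common tract, so Lemma~\ref{growlemma} applies with fixed constants $\alpha,\beta$ and, say, $K=2$. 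The key point is that $X(f)$ is defined using the threshold $\exp^n(R)$, which in the logarithmic coordinate becomes the threshold $\exp^n(R)$ (more precisely, $\operatorname{Re} G^{n+\ell} \geq \exp^n(R)$); so I want to build a closed set
\[
B := \bigcap_{k \geq 0} \{ z \in \C : \operatorname{Re} G^k(z) \geq \sigma^k(R') \text{ or } G^k(z) \notin \mathcal{T} \},
\]
where $\sigma(r) := \exp(\epsilon r)$ plays exactly the role of $\tau$ in Lemma~\ref{lemma:Gisnice}.

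The heart of the argument is again to show that every component of $B^c$ is bounded. I would run the same inductive construction: if $B^c$ had an unbounded component $\Gamma$, it lies in a single tract $T_0$, and I would extract a curve $\gamma_0$ through three points with real parts $a_0$, $3a_0+M$, $9a_0+4M$, then use Lemma~\ref{growlemma} twice (once to force $\operatorname{Re} G(w) > \sigma(\operatorname{Re} w)$ on the forward part of the curve, once with the triple of points to propagate the real-part gaps) to produce a sequence of curves $(\gamma_k)$ satisfying the analogues of properties \eqref{propa}--\eqref{propf} with $\sigma^k(R')$ in place of $\tau^k(R')$. Lemma~\ref{RSlemma} then yields a point $\zeta \in \gamma_0$ with $\operatorname{Re} G^n(\zeta) \geq \sigma^n(R')$ for all $n$, forcing $\zeta \in B$, a contradiction. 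The only genuinely new verification compared with Lemma~\ref{lemma:Gisnice} is that I do not need the finite-order hypothesis here: the estimate \eqref{bigenough} comparing $\tau^n(R')$ with $M^n(r_0,G)$ was only used there to conclude $B \subset A(G)$, whereas now I only need $B$ to feed into $X$. Indeed the growth $\sigma^n(R')$ is itself at least iterated-exponential (since $\epsilon$ is fixed and $\sigma(r) = \exp(\epsilon r)$ eventually dominates $\exp(r)$ after one rescaling of $R'$), so that $B \cap J(G) \subset X(G)$, where $X(G)$ is the logarithmic analogue of $X$; hence $\endpoints(G) \setminus X(G) = J(G) \setminus X(G) \subset B^c$, and each such point is separated from infinity by the boundary $\partial X_0$ of the bounded component $X_0$ of $B^c$ containing it.

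To finish, I would transfer this to $f$. Using Proposition~\ref{prop:thereisaconjugacy} I obtain a quasiconformal $\vartheta$ conjugating $f$ to $g$ on a neighbourhood of the Julia set; since $\vartheta$ is quasiconformal, the final clause of Theorem~\ref{theo:implicationsofaconjugacy} gives $\vartheta(X(f)) = X(g)$, and it also gives $\vartheta(J(f)) = J(g)$, so that $\endpoints(f) \setminus X(f)$ maps (after passing through $\exp$ and $\vartheta$) to $\endpoints(G) \setminus X(G)$. Consequently every point of $\endpoints(f) \setminus X(f)$ is separated from infinity in $(\endpoints(f)\setminus X(f)) \cup \{\infty\}$ by a continuum contained in the complement. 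Since $J(f)$ is a Cantor bouquet (by Theorem~\ref{theo:disjoint-type-inf-order-context}), $\endpoints(f)$ is totally separated, hence so is its subset $\endpoints(f) \setminus X(f)$; combining this with the separation-from-infinity property via Lemma~\ref{lemm:LasseVasso} shows $(\endpoints(f) \setminus X(f)) \cup \{\infty\}$ is totally separated. The main obstacle I anticipate is bookkeeping the quasiconformal distortion: the set $X$ is defined by a threshold that is \emph{not} invariant under the H\"older-continuous $\vartheta$, and the whole point of including the quasiconformality hypothesis in Theorem~\ref{theo:implicationsofaconjugacy} (through the estimate $|\vartheta(z)| > |z|^s$) is to absorb this distortion by enlarging $R$ to a suitable $R'$; I must be careful that the chain $X(f) \to X(g) \to X(G)$ is set up so that this distortion is handled exactly once and in the right direction.
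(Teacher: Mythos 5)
Your proposal is correct and is essentially the paper's own argument: the paper's (largely omitted) proof consists of precisely your observations, namely that the finite-order hypothesis in the proof of Lemma~\ref{lemma:Gisnice} is used only to obtain \eqref{bigenough} and hence to place $B \cap J(G)$ in $A(G)$, that with uniformly bounded slope and wiggling alone the same construction still yields $B \cap J(G) \subset X(G)$, and that the transfer back to $f$ goes through Proposition~\ref{prop:thereisaconjugacy} together with the quasiconformal clause $\vartheta(X(f)) = X(g)$ of Theorem~\ref{theo:implicationsofaconjugacy}, which exists for exactly this purpose. Two small points of wording rather than substance: the comparison of $\sigma^n(R')$ with $\exp^n(R)$ should be stated as an inequality between iterates obtained by enlarging the starting value $R'$ (in the spirit of \cite[Lemma 3.4]{DevHairs}), since $\exp(\epsilon r) < \exp(r)$ pointwise when $\epsilon < 1$; and you assert the equality $\endpoints(G) \setminus X(G) = J(G) \setminus X(G)$ (which the paper obtains by following the proof of \cite[Theorem 1.2]{DevHairs}) where your argument needs, and in fact only justifies, the inclusions $\endpoints(G) \setminus X(G) \subset B^c$ and $B \cap \endpoints(G) \subset X(G)$.
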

\begin{proof}
This result is a consequence of the following observation. In the proofs of \cite[Theorem 1.2]{DevHairs} and Theorem \ref{theo:disjoint-type}, the assumption of finite order is used in two ways. The first is to deduce that the tracts of the function have uniformly bounded slope and uniformly bounded wiggling, and then to argue from this that certain points escape to infinity faster than an iterated exponential. The second is to deduce from this that these points are, in fact, in the fast escaping set. If we omit the assumption of finite order, and instead assume only that the tracts have uniformly bounded slope and wiggling, we still obtain the first of these two facts. 

In particular, it follows from Theorem \ref{theo:disjoint-type-inf-order-context} that $J(f)$ is a Cantor bouquet. By following the proof of \cite[Theorem 1.2]{DevHairs}, we can deduce that $$\endpoints(f) \setminus X(f) = J(f) \setminus X(f).$$ The rest of the proof is then very similar to the proof of Theorem \ref{theo:disjoint-type}, and is omitted.
\end{proof}

Suppose now that $f$ is of disjoint type and has only one tract. Suppose also that the tracts of a transform $F$ of $f$ have bounded slope, bounded wiggling and bounded gulfs. As in Section \ref{main proof_sec} we choose $L > 1$ sufficiently large so that $g(z) := f(z)/L$ is of disjoint type, and also equation \eqref{niceeq} holds.

Let $G$ be a logarithmic transform of $g$. 
As before, we can assume both that \eqref{niceeq} holds, and that $G$ is normalised, i.e. $G \in \Blog^n$. Note that since $f$ has only one tract, all the tracts of $G$ are $2\pi i$ translates of each other. We can assume, then, that the tracts of $G$ have uniformly bounded slope with constants $\alpha, \beta > 0$, uniformly bounded wiggling with constants $K' > 1$ and $\mu > 0$, and finally uniformly bounded gulfs with constant $C > 1$. Let $\epsilon, M$ be the constants from Lemma~\ref{growlemma}, for these values of $\alpha$ and $\beta$, and with $K=2$. Note also that 
\begin{equation}
\label{Mintract}
M(r, G) = \max_{w \in T_n, \operatorname{Re} w = r} \operatorname{Re} G(w), \qfor r> 0, \ n \geq 0.
\end{equation}

The following lemma is somewhat familiar.
\begin{lemma}
\label{inf-sep_lem}
Every point in $\meanderingendpoints(G)$ can be separated from infinity by a continuum $\gamma \subset A(G) \cup J(G)^c$.
\end{lemma}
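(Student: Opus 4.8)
The plan is to follow the proof of Lemma~\ref{lemma:Gisnice} as closely as possible, since the statement is identical; the only essential change is in the growth estimate that drives the argument, where the finite-order input (Lemma~\ref{growlemma}) must be supplemented by the bounded-gulfs estimates (Lemmas~\ref{inflemma} and~\ref{inflemma2}) in order to reach the rate of the maximum modulus.

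First I would record that, by Theorem~\ref{theo:disjoint-type-inf-order-context} applied to $g$ (which has one tract, and whose transform has bounded slope, wiggling, and gulfs), $\meanderingendpoints(G) = J(G)\setminus A(G)$, so that $\meanderingendpoints(G)^c = A(G)\cup J(G)^c$. Next, in place of the iterated-exponential threshold $\tau^k(R')$ used in Lemma~\ref{lemma:Gisnice}, I would choose $R'$ large and define directly
\[
B := \bigcap_{k\ge 0}\{\, z\in\C : \operatorname{Re} G^k(z) \ge M^k(R',G)\ \text{ or }\ G^k(z)\notin\mathcal{T}\,\},
\]
again setting $G(z)=0$ for $z\notin\mathcal{T}$. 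With this definition $B$ is closed, $B^c\subset\mathcal{T}$, and, crucially, if $z\in B\cap J(G)$ then $\operatorname{Re} G^n(z)\ge M^n(R',G)$ for all $n$, so that $z\in A(G)$ by the definition of the fast escaping set. Thus $B\subset A(G)\cup J(G)^c$ for free, and the delicate comparison $\tau^n(R')\ge M^n(r_0,G)$ of the finite-order proof (which fails in infinite order) is no longer needed.

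The heart of the matter is again to show that every component of $B^c$ is bounded, and this is where bounded gulfs enter. Assuming an unbounded component $\Gamma\subset T_0$ of $B^c$, I would construct, as before, a sequence of curves $(\gamma_k)$ with $\gamma_{k+1}\subset G(\gamma_k)$ and apply Lemma~\ref{RSlemma} to produce a point $\zeta\in\gamma_0\subset\Gamma$ whose orbit satisfies $\operatorname{Re} G^n(\zeta)\ge M^n(R',G)$, forcing $\zeta\in B$ and contradicting $\Gamma\subset B^c$. The new ingredient is the growth step: to guarantee that the curves reach real part of order $M^k(R',G)$ at stage $k$, I would invoke Lemma~\ref{inflemma}, which says that a curve crossing $T_k$ at real part $a$ coincides there with the level set $L_{p,a}$, and that its image under $G$ attains real part at least $M(a/D,G)$, where $D=D(C,K',\mu)$ is the gulf constant. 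Combined with \eqref{Mintract}, this converts the forward reach of a curve into genuine maximum-modulus growth, with Lemma~\ref{inflemma2} available to extend the curves forward along level sets when additional reach is required; Lemma~\ref{growlemma} is retained to control the intermediate point-wise growth exactly as in Lemma~\ref{lemma:Gisnice}.

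I expect the main obstacle to be the bookkeeping in this growth step. Two issues must be handled with care: first, the factor $D$ lost in passing from real part $a$ to $M(a/D,G)$ must be absorbed, which requires choosing the marked real parts so that the induction closes (the rapid growth of $M$ makes this possible, but the inequalities are tight); and second, one must ensure that the curves remain, or contain, crossing arcs of the successive tracts so that Lemma~\ref{inflemma} continues to apply, and that the initial curve $\gamma_0$ can genuinely be taken inside the component $\Gamma$. Once the unbounded-component contradiction is established, the proof concludes exactly as in Lemma~\ref{lemma:Gisnice}: we have $B\subset A(G)\cup J(G)^c$, and for $z\in\meanderingendpoints(G)\subset B^c$ the boundary of the bounded component of $B^c$ containing $z$ is the required separating continuum.
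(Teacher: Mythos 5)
Your overall strategy is the paper's: the paper's proof of this lemma is indeed a modification of Lemma~\ref{lemma:Gisnice}, with a set $B$, an unbounded-component contradiction via Lemma~\ref{RSlemma}, and the gulf lemmas supplying the growth. But your central growth step, as stated, is false, and it is precisely here that the paper's real work lies. Lemma~\ref{inflemma} does \emph{not} say that the image under $G$ of a curve crossing $L_{p,a}$ attains real part at least $M(a/D,G)$. Part \eqref{inflemmab}, combined with \eqref{Mintract}, says that the maximum of $\operatorname{Re} G$ over the \emph{entire level set} $L_{p,a}$ dominates $M(a/D,G)$; your curve meets $L_{p,a}$ in a point which need not be anywhere near the point of $L_{p,a}$ where that maximum is attained, so you get no lower bound at all on $\operatorname{Re} G$ along the curve from Lemma~\ref{inflemma} alone. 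Bridging exactly this gap is the heart of the paper's argument, not bookkeeping: the paper marks a point $z_k''$ of the curve on a high level set $L_{p_k,\alpha_1 a_k}$ and a point $w_k$ of the curve on the lower level set $L_{p_k,\alpha_1 a_k/2}$, uses Lemma~\ref{inflemma2} to show that $|G(z_k'')|$ exceeds $|G(w')|$ for \emph{every} $w'\in L_{p_k,\alpha_1 a_k/2}$ (in particular for the point where $\operatorname{Re} G \geq M(c_k/D,G)$) by a factor $\exp(\tfrac14\alpha_1 a_k - 4\pi)$, and then runs a separate bounded-slope argument, with $G(w_k)$ as a reference point in the image tract, to convert the modulus bound into the real-part bound $\operatorname{Re} G(z_k'')>c|G(z_k'')|$ with $c=(4(\alpha+2))^{-1}$. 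Your sketch assigns Lemma~\ref{inflemma2} only the vague role of ``extending curves forward,'' and omits the modulus-to-real-part conversion entirely; without both ingredients the induction step produces nothing.

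There is a second, related problem: your threshold $M^k(R',G)$ is stronger than what this construction delivers, and your stated reason why the induction nevertheless closes (``the rapid growth of $M$ makes this possible'') is exactly backwards. Even after the fix above, the recursion one obtains is of the form $a_{k+1} \geq c\exp(\tfrac14\alpha_1 a_k-4\pi)\,M(\epsilon' a_k,G)$, where $\epsilon':=(4DK')^{-1}<1$. For an infinite-order $G$ the ratio $M(a_k,G)/M(\epsilon' a_k,G)$ can grow faster than any $\exp(O(a_k))$, so the exponential prefactor cannot promote $M(\epsilon' a_k,G)$ to $M(a_k,G)$: the faster $M$ grows, the worse the loss in its argument hurts. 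The paper's device is to weaken the threshold rather than fight this, defining $B$ with $\tfrac{1}{\epsilon'}M^k(\epsilon' R',G)$, for which the recursion $a_{k+1}\geq \tfrac{1}{\epsilon'}M(\epsilon' a_k,G)$ iterates exactly; this weaker rate still yields $B\cap J(G)\subset A(G)$ because the definition of $A(G)$ is independent of the base value, so one may take $R=\epsilon' R'$ there. (Alternatively, your stronger threshold could be rescued by placing the lower marked level set at height at least $2Da_k$, so that Lemma~\ref{inflemma}~\eqref{inflemmab} gives $M(2a_k,G)\geq M(a_k,G)$ directly, at the cost of enlarging $\alpha_2$ to roughly $8DK'$; but neither this choice nor its verification appears in your sketch, and as written the inequalities do not close.)
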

\begin{proof}
Let $T_0, T_1, \ldots$ be the tracts of $G$, and let $\mathcal{T}$ denote the union of the tracts. Again we fix $G(z) = 0$, for $z \notin \mathcal{T}$.

We let $p_n$ be the point of $\partial T_n$ such that $G(p_n) = 0$, and note that these points are $2\pi i$ translates of each other. Set $p = \operatorname{Re} p_n$, for some $n \in \N$. Let $p' = \max\{p, 1\}$. Let $D > 1$ be the constant from Lemma~\ref{inflemma}, and set $\epsilon' = (4DK')^{-1} < 1$.

For simplicity we first prove the following.
\begin{proposition}
\label{prop:infnew}
Suppose that $T$ is a tract of $G$, $x_2 > \max\{2 \mu K', 2DK'p'\}$, and $x_1~\in~(p', \frac{x_2}{2DK'})$. If $\gamma \subset T$ is a curve which includes a point of real part $x_1$ and a point of real part $x_2$, then
\[
\gamma \cap L_{p, t} \ne \emptyset, \qfor t \in \left[Dx_1, \frac{x_2}{2K'}\right].
\]
\end{proposition} 
\begin{proof}
Suppose that $\gamma \subset T$ is a curve which includes a point $w$ of real part $x_1$ and a point $w'$ of real part $x_2$. Fix a value of $t \in \left[Dx_1, \frac{x_2}{2K'}\right]$. 

By Lemma~\ref{inflemma} \eqref{inflemmaa} we have that $L_{p, t} = L_{w, t}$. Thus $w$ lies in a bounded component of $T\setminus L_{p, t}$. By way of contradiction, suppose that $\gamma$ does not meet $L_{p, t}$. Then $w'$ also lies in the same bounded component of $T \setminus L_{p,t}$. Thus the geodesic joining $w'$ to $\infty$ in $T$ contains a point $w''$ with Re $w'' = t \leq x_2 / 2K'$. On the other hand, the hypothesis of bounded wiggling implies that $t > x_2/K' - \mu$. This is in contradiction to the choice of $x_2$.
\end{proof}

Choose 
\[
\alpha_1 > \max\left\{8\pi, 2Dp', \left(-4\log(\epsilon'(4(\alpha+2))^{-1}) + 16\pi\right)\right\},
\]
and
\[
\alpha_2 > \max\{3\alpha_1, 2DK'p', \mu K'\},
\]
and finally
\[
R' > \max\left\{p', \frac{1}{\epsilon}, M, 2Dp'\right\}.
\]
Increasing $R'$ if necessary, we assume that $M(\epsilon'r, G) > \epsilon'r$, for $r \geq R'$.

Define
\[
B := \bigcap_{k\geq 0} \{ z \in \C : \operatorname{Re} G^k(z) \geq \frac{1}{\epsilon'} M^k(\epsilon' R', G) \text{ or } G^k(z) \notin \mathcal{T} \}.
\]
%
Once again, $B$ is closed, and $B^c \subset \mathcal{T}$.

We shall prove that all components of the complement of $B$ are bounded. Suppose, by way of contradiction, that $B^c$ has an unbounded component, say $\Gamma$. It follows that there is a tract $T_0$ of $G$ such that $\Gamma \subset T_0$.
 
Since $\Gamma$ is unbounded and open, there exist points $z_0', z_0'', z_0''' \in T_0$ and a curve $\gamma_0 \subset \Gamma$ joining $z_0', z_0''$ and $z_0'''$ such that the following all hold.
\begin{itemize}
\item $\operatorname{Re} z_0' = a_0 > R'$.
\item $\operatorname{Re} z_0'' = \alpha_1 a_0$ and, in fact, $z_0'' \in L_{p_0, \alpha_1 a_0}$.
\item $\operatorname{Re} z_0''' = \alpha_2 a_0$.
\item $\gamma_0 \subset \{ z \in T_0 : \operatorname{Re} z \in [a_0, \alpha_2 a_0]\}$.
\end{itemize}
Note that the condition that $z_0'' \in L_{p_0, \alpha_1 a_0}$ follows by Lemma~\ref{inflemma} \eqref{inflemmaa}, with $z = z_0'$ and $a = \operatorname{Re} z_0''$, since $\Gamma$ is unbounded.

We next use induction to prove that there is a sequence of curves $(\gamma_k)_{k\geq 0}$ such that the following all hold for $k \geq 0$.
\begin{enumerate}[(a)]
\item $\gamma_{k+1} \subset G(\gamma_k)$. \label{ipropa}
\item $\gamma_k$ joins points  $z_k', z_k''$ and $z_k'''$. \label{ipropb}
\item $\operatorname{Re} z_k' = a_k$ satisfies $a_{k} > \frac{1}{\epsilon'} M(\epsilon' a_{k-1}, G)$ for $k \geq 1$. \label{ipropc}
\item $\operatorname{Re} z_k'' = \alpha_1 a_k$ and, in fact, $z_k'' \in L_{p_k, \alpha_1 a_k}$. \label{ipropd}
\item $\operatorname{Re} z_k''' = \alpha_2 a_k$. \label{iprope}
\item There is a tract $T_k$ such that $\gamma_k \subset \{ z \in T_k : \operatorname{Re} z \in [a_k, \alpha_2 a_k]\}$. \label{ipropf}
\end{enumerate}

We need to prove that if this claim holds for all $0 \leq j \leq k$, then it also holds for $j = k+1$. Assume, then, that curves with properties \eqref{ipropa}--\eqref{ipropf} have been constructed for $0 \leq j \leq k$. Note that it follows from \eqref{ipropa} and \eqref{ipropc} that if $\gamma'_k$ is the component of $G^{-k}(\gamma_k)$ contained in $\gamma_0$, then 
\[
\operatorname{Re} G^n(z) \geq \frac{1}{\epsilon'}M^k(\epsilon'R',G), \qfor z \in \gamma'_k \text{ and } 0 \leq n \leq k.
\]
Note that there is a tract $T_{k+1}$ such that $G(\gamma_k) \subset T_{k+1}$. 

\begin{figure}
\centering
\includegraphics[scale=0.4]{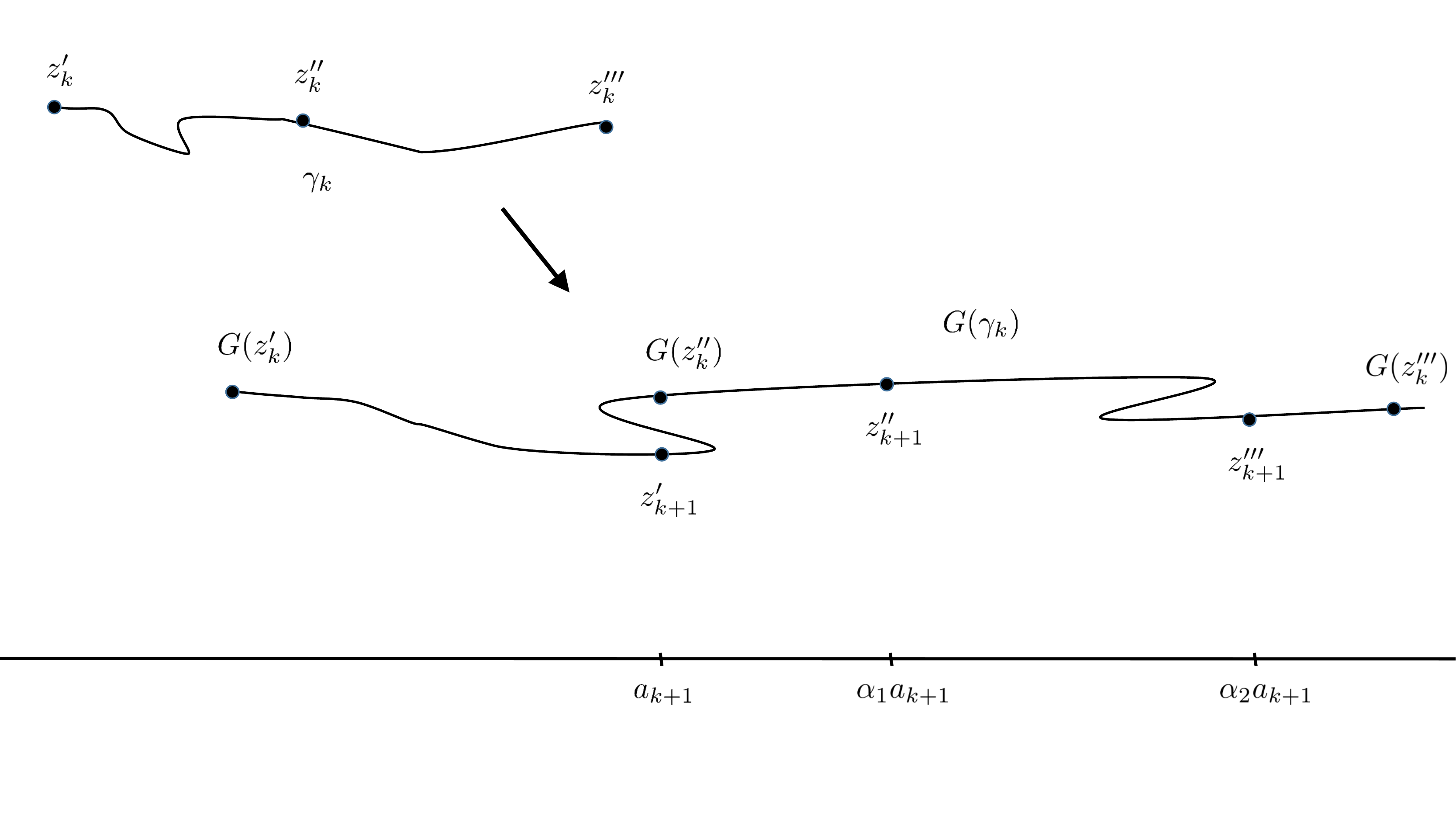}
\caption{\label{fig:lemma 5.1} The curves $\gamma_k$ and $G(\gamma_k)$}
\end{figure}

By an application of Lemma~\ref{growlemma}, we obtain that
\begin{equation*}
\operatorname{Re} G(z_k''') > \exp(\alpha_2 \epsilon a_k) \operatorname{Re} G(z_k'') > \alpha_2 \operatorname{Re} G(z_k'').
\end{equation*}
We let $z_{k+1}'$ be a point of $G(\gamma_k)$ of real part $a_{k+1}=\operatorname{Re} G(z_k'')$, and we let $z_{k+1}'''$ be a point of $G(\gamma_k)$ of real part $\alpha_2 a_{k+1} < \operatorname{Re} G(z_k''')$ (see Figure \ref{fig:lemma 5.1}). We now apply Proposition~\ref{prop:infnew} with $x_1= a_{k+1}$, $x_2= \alpha_2 a_{k+1}$ and $t=\alpha_1 a_{k+1}$ to deduce that there is a point $$z_{k+1}'' \in L_{p_{k+1}, \alpha_1 a_{k+1}} \cap G(\gamma_k).$$ It remains to show that \eqref{propc} is satisfied. \\

The argument is very similar to \cite[proof of Theorem 5.2]{DevHairs}, but we include some details for the reader's convenience. Set
\[
c_k = \frac{1}{2}\alpha_1 a_k = \frac{1}{2} \operatorname{Re} z_k''.
\]
Choose $w_k \in \gamma_k \cap L_{p_k, c_k}$. Once again this is possible because of Proposition~\ref{prop:infnew}.

Let $c = (4(\alpha+2))^{-1}$. It follows from the choice of $R'$, from Lemma~\ref{inflemma2}, and using an argument exactly as in \cite[p.756]{DevHairs} that
\[
\operatorname{Re} G(z_k'') > c |G(z_k'')|.
\]
  
By the choice of constants, we have that $c_k/D > p'$. It follows by Lemma~\ref{inflemma}~\eqref{inflemmab}, together with \eqref{Mintract}, that there is a point $w_k' \in L_{p_k, c_k}$ such that 
\[
\operatorname{Re} G(w_k') \geq M(c_k/D, G).
\]
We apply Lemma~\ref{inflemma2} again to obtain that
\[
\frac{|G(z_k'')|}{M(c_k/D, G)} \geq \frac{|G(z_k'')|}{|G(w_k')|} \geq \exp\left(\frac{1}{4}\alpha_1a_k - 4\pi\right).
\]

Hence
\[
\operatorname{Re} G(z_k'') > c \left|G(z_k'')\right|\geq c\exp\left(\frac{1}{4}\alpha_1a_k - 4\pi\right)M(c_k/D, G).
\]

It then follows from our choice of $\alpha_1$ that
\begin{equation}
\label{aclaim}
a_{k+1} = \operatorname{Re} G(z_k'') > \frac{1}{\epsilon'} M(\epsilon' a_k, G),
\end{equation}
which completes the construction. \\

By Lemma~\ref{RSlemma}, there is a point $\zeta \in \gamma_0$ such that 
\[
\operatorname{Re} G^n(\zeta) \geq \frac{1}{\epsilon'} M^n(\epsilon'R', G), \qfor n \in \N.
\]
This implies that $\zeta \in B$, which is a contradiction. This completes the proof that all components of the complement of $B$ are bounded. \\

We next claim that $B \subset A(G) \cup J(G)^c$. 
For, if $z \in B \cap J(G)$, then it follows from the choice of $R'$, together with the definition of $B$, that 
\[
\operatorname{Re} G^n(z) \geq \frac{1}{\epsilon'} M^n(\epsilon'R', G) \geq M^n(\epsilon'R', G), \qfor n \in \N,
\]
and so $z \in A(G)$ as required. \\

To complete the proof of the lemma, suppose that $z \in J(G) \setminus A(G)$. Then $z \in B^c$, and so we can let $X$ be the component of $B^c$ containing $z$. We know that this set is bounded. Since $B$ is closed, it follows that $\partial X$ is a continuum in $B \subset A(G) \cup J(G)^c$ which separates $z$ from infinity, as required.
\end{proof}
We are now able to prove Theorem \ref{theo:disjoint-type-inf-order}.
\begin{proof}[Proof of Theorem~\ref{theo:disjoint-type-inf-order}]
The first part of the theorem follows easily from Lemma~\ref{lemm:otherinf} together with the fact that $X(f) \subset I(f)$.

The proof of the second part is very similar to the proof of Theorem \ref{theo:disjoint-type}. First we use Theorem \ref{theo:disjoint-type-inf-order-context},  Theorem \ref{theo:implicationsofaconjugacy} and Proposition~\ref{prop:infnew} to show that every point in $\meanderingendpoints(f)$ can be separated from infinity by a continuum $\gamma \subset A(f) \cup F(f)$. 

Finally, since $J(f)$ is a Cantor bouquet we have that $\meanderingendpoints(f)\subset \endpoints(f)$ is totally separated. We can then use the argument from the proof of Theorem \ref{theo:disjoint-type} to show that $\meanderingendpoints(f)\cup\{\infty\}$ is totally separated. 
\end{proof}
%

%
%
%
\section{Proof of Theorem \ref{theo:Fatou-functions}}
\label{not B_sec}


It follows from \cite[Theorem 2]{semiconjugation} that $J(g) = \pi(J(f))$, and so $\pi$ is a local homeomorphism from $J(f)$ to $J(g)$. In particular, $\endpoints(g) = \pi(\endpoints(f))$. Note also that $A(g)\subset \pi (A(f))$ \cite[Theorem 5]{semiconjugation}. It follows by Theorem~\ref{theo:disjoint-type-context} and Theorem~\ref{theo:Fatou-functions-context} that $\meanderingendpoints(g) \supset \pi (\meanderingendpoints(f))$.

Suppose that $z \in \meanderingendpoints(f)$. We claim that there is a bounded continuum in $A(f) \cup F(f)$ which separates $z$ from infinity.

Let $w= \pi(z)$. Then, by the above, $w \in \meanderingendpoints(g)$. Hence, by the construction in the proof of Theorem \ref{theo:disjoint-type}, there exists a continuum $\gamma \subset A(g) \cup F(g)$ that separates $w$ from infinity. Since $0 \in F(g)$, we can assume that $0 \notin \gamma$. Let $X$ be the bounded complementary component of $\gamma$ which contains $w$.

We now have two cases. On one hand, suppose that $0 \notin X$. Then the component, $X'$ say, of $\pi^{-1}(X)$ that contains $z$ is bounded. We let $\gamma' = \partial X'$, so that $\gamma'$ is a continuum in $A(f)\cup F(f)$ that separates $z$ from infinity, as required. 

On the other hand, suppose that $0 \in X$. The preimage under $\pi$ of $\partial X$ is an unbounded continuum, which separates the preimage, $Y$, of $X$ (which includes $z$) from the rest of the plane.
Note that $Y$ contains a half-plane which is compactly contained in $F(f)$ (since $0$ is an attracting fixed point of $g$). Call this half-plane $Y'$. Note that the Fatou set of $f$ consists of one completely invariant Baker domain (see \cite[Remark 2 after Theorem 4.1]{DevHairs}).

Now, $\partial X$ cannot lie in $A(g)$, because, if so, then $\partial X$ lies in $J(g)$, and $F(g)$ is an unbounded domain that contains the origin. Hence $\partial X$ contains a point in $F(g)$, say $t'$. It follows that $\partial Y$ contains infinitely many (periodic) preimages of $t'$, say $t_n, n \in \mathbb{Z}$, each of which lies in $F(f)$.

Since $F(f)$ is connected, we can join $t_0$ to $\partial Y'$ by a simple curve $\omega_0\subset Y \cap F(f)$. The periodic copies of $\omega_0$ give a collection of curves in $Y \cap F(f)$, $(\omega_n)_{n\in\mathbb{Z}}$, each of which joins $\partial Y'$ to $t_n$. 

Set $$\Gamma' := \partial Y \cup \partial Y' \cup \bigcup_{j \in \mathbb{Z}} \omega_j.$$ Then $z$ lies in a bounded component, say $K$, of the complement of $\Gamma'$. Then $\partial K$ is the required bounded continuum in $A(f) \cup F(f)$ which separates $z$ from infinity. This completes the proof of the claim.

Since $J(f)$ is a Cantor bouquet, we have that $\endpoints(f)$ is totally separated. Hence the result follows by Lemma \ref{lemm:LasseVasso}.
%
%
%
\section{Hyperbolic functions}
\label{hyperbolic_sec}
To prove Theorem \ref{theo:hyperbolic-esc}, we require the following, which is part of \cite[Theorem~5.2]{Rigidity}. This establishes a  semiconjugacy between the Julia set of a hyperbolic function and the Julia set of a certain disjoint-type function.
\begin{theorem}
\label{theo:rigidityconjugacy}
Suppose that $f \in \B$ is hyperbolic. Then there exists $L>0$ such that the following holds. Set $g(z) = f(z/L)$. Then $g$ is of disjoint type, and there exists a continuous surjection $\vartheta: J(g) \to J(f)$ such that $f \circ \vartheta = \vartheta \circ g$. Moreover $\vartheta$ is a homeomorphism from $I(g)$ to $I(f)$, and $\vartheta(z) \rightarrow \infty$ as $z \rightarrow \infty$.
\end{theorem}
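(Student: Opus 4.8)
The plan is to realise the disjoint-type model $g$ as a rigid rightward translate of $f$ at the level of logarithmic transforms, transport the rigidity of escaping dynamics through the exponential to obtain the homeomorphism $I(g) \to I(f)$, and then extend this map continuously across the non-escaping endpoints to produce the semiconjugacy on Julia sets. First I would fix the scaling. Let $F : \mathcal{T} \to H$ be a logarithmic transform of $f$. Using $\exp \circ F = f \circ \exp$, a direct computation shows that $G(w) := F(w - \log L)$ is a logarithmic transform of $g(z) = f(z/L)$, since $\exp G(w) = f(\exp(w - \log L)) = g(\exp w)$. Thus the tracts of $G$ are the translates $\mathcal{T} + \log L$, while the codomain $H$ is unchanged. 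Since $H$ contains a right half-plane and the components of $\mathcal{T}$ have real parts bounded below, for all sufficiently large $L$ we have $\overline{\mathcal{T} + \log L} \subset H$; hence $G$, and therefore $g$, is of disjoint type. As $z \mapsto z/L$ is conformal, $S(g) = S(f)$ is bounded, so $g \in \B$.

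Next I would produce the conjugacy on the escaping sets. The maps $\Phi(z) = z + \log L$ and $\Psi = \operatorname{id}$ are conformal and satisfy $\Psi \circ F = G \circ \Phi$, so $F$ and $G$ are quasiconformally equivalent. Applying the general form of the $\Blog$-rigidity theorem underlying Theorem~\ref{lasseconj} (which conjugates the two maps on the set of points whose orbit remains in the tracts with large real part, and does not require disjoint type of both functions) yields a quasiconformal $\Theta : \C \to \C$ with $\Theta \circ G = F \circ \Theta$ there, commuting with $z \mapsto z + 2\pi i$ and satisfying $\operatorname{Re} \Theta(z) \to -\infty$ as $\operatorname{Re} z \to -\infty$. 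Exponentiating $\Theta$ exactly as in the construction in the proof of Proposition~\ref{prop:thereisaconjugacy} gives a quasiconformal $\vartheta_0$ with $\vartheta_0 \circ g = f \circ \vartheta_0$ near infinity; arguing as in Theorem~\ref{theo:implicationsofaconjugacy}, $\vartheta_0$ restricts to a homeomorphism $I(g) \to I(f)$ conjugating the dynamics, with $\vartheta_0(z) \to \infty$ as $z \to \infty$.

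Finally I would extend $\vartheta_0$ to all of $J(g)$. Since $g$ is of disjoint type, $J(g)$ is a Cantor bouquet and $I(g)$ is dense in it (each hair minus its endpoint escapes), while for the hyperbolic function $f$ one has $\overline{I(f)} = J(f)$ by Eremenko's theorem together with $I(f) \subset J(f)$. For a non-escaping endpoint $z \in J(g)$ I would set $\vartheta(z) := \lim_n \vartheta_0(z_n)$ along escaping $z_n \to z$ on the hair through $z$, and verify that this limit exists and is independent of the approximating sequence; the normalisation $|G'| \geq 2$ and the growth estimate of Lemma~\ref{growlemma} force the hairs, and hence their $\vartheta_0$-images, to have diameters shrinking uniformly towards the endpoints, which gives the required equicontinuity. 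The relation $f \circ \vartheta = \vartheta \circ g$ then passes from $I(g)$ to all of $J(g)$ by continuity. Surjectivity follows since $\vartheta(z) \to \infty$ as $z \to \infty$ makes $\vartheta$ proper, hence closed, so its image is a closed set containing the dense set $I(f)$ and therefore all of $\overline{I(f)} = J(f)$.

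The main obstacle is the continuous extension across the endpoints: establishing that the defining limit exists and is well defined, and controlling the way $\vartheta$ identifies distinct endpoints of the Cantor bouquet $J(g)$ so as to produce exactly the pinched Cantor bouquet $J(f)$. This is precisely where the uniform geometry of the tracts — their hyperbolic metric and the expansion estimates — must be exploited; because $f$ is only hyperbolic, Proposition~\ref{prop:thereisaconjugacy} cannot be quoted directly, so the general $\Blog$-rigidity together with these uniform estimates carries the essential weight of the argument.
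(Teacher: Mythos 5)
The paper does not prove Theorem~\ref{theo:rigidityconjugacy} at all: it is quoted as part of \cite[Theorem~5.2]{Rigidity}, with the remark that the final assertion $\vartheta(z)\to\infty$ follows from \cite[Equation~(5.2)]{Rigidity}. So your attempt has to be measured against Rempe's proof. Your first step is correct and is the standard reduction: $G(w):=F(w-\log L)$ is indeed a logarithmic transform of $g(z)=f(z/L)$, the real parts of $\mathcal{T}$ are uniformly bounded below (since $f^{-1}(\overline{D})$ contains a neighbourhood of $0$), so $\overline{\mathcal{T}+\log L}\subset H$ for all large $L$ and $g$ is of disjoint type; and $\Phi(z)=z+\log L$, $\Psi=\operatorname{id}$ exhibit the quasiconformal equivalence. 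You are also right that Theorem~\ref{lasseconj} as stated cannot be quoted and that the general form of the rigidity theorem must be used.

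The genuine gap is the passage from the near-infinity conjugacy to a homeomorphism $I(g)\to I(f)$, and from there to $\vartheta$ on $J(g)$. Since $F$ need not be of disjoint type, the general rigidity theorem conjugates $F$ and $G$ only on the set of points whose \emph{entire forward orbit} stays in $\{\operatorname{Re} w\geq R\}$, not on $I(G)$ and not on a neighbourhood of infinity: a point of large real part may map back into the region where $\vartheta_0$ conjugates nothing. Consequently ``arguing as in Theorem~\ref{theo:implicationsofaconjugacy}'' is unavailable — that theorem requires a conjugacy on a domain containing $J\cup I$. An escaping orbit may spend arbitrarily many initial steps outside the good region, and to define $\vartheta$ there one must pull $\vartheta_0$ back along the orbit, i.e.\ make a coherent choice among the countably many inverse branches of $f^n$; nothing in your argument makes that choice. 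This is precisely where hyperbolicity carries the weight in \cite{Rigidity}: $f$ is uniformly expanding, in a suitable hyperbolic metric, on a neighbourhood of $J(f)$ disjoint from the postsingular set, and $\vartheta$ is constructed on all of $J(g)$ as the uniform limit of pullbacks of the form $f^{-n}\circ\vartheta_0\circ g^{n}$; uniform convergence then gives continuity (including at the non-escaping endpoints), the semiconjugacy, injectivity on the escaping set, and the bounded-distance estimate \cite[Equation~(5.2)]{Rigidity} from which $\vartheta(z)\to\infty$ follows. Your substitute — equicontinuity via Lemma~\ref{growlemma} and uniformly shrinking hair diameters — fails in the stated generality: Lemma~\ref{growlemma} presupposes bounded wiggling (with bounded-slope control as in \eqref{e1}), which is guaranteed for finite-order transforms but not for an arbitrary hyperbolic $f\in\B$, which may have infinite order and wild tract geometry; and even granting shrinking hairs for $G$, this controls neither the images in the pinched bouquet $J(f)$, where distinct endpoints may be identified, nor the branch-selection problem just described. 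Your properness argument for surjectivity is fine, but only once the extension exists.
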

\begin{remark}\normalfont
The fact that $\vartheta(z) \rightarrow \infty$ as $z \rightarrow \infty$ is not stated in \cite[Theorem 5.2]{Rigidity}, but is an immediate consequence of \cite[Equation 5.2]{Rigidity}.
\end{remark}
We require the following additional property of the map $\vartheta$ in Theorem~\ref{theo:rigidityconjugacy}.
\begin{proposition}
\label{prop:vartheta}
Suppose that the maps $f, g$ and $\vartheta$ are as in the statement of Theorem~\ref{theo:rigidityconjugacy}. Then $\vartheta$ is a homeomorphism from $A(g)$ to $A(f)$.
\end{proposition}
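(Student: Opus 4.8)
The plan is to leverage the conjugacy relation $f \circ \vartheta = \vartheta \circ g$ to transfer membership in the fast escaping set between the two functions, using the dynamical characterisation of $A$ in terms of the maximum modulus function. Since Theorem~\ref{theo:rigidityconjugacy} already gives that $\vartheta$ is a homeomorphism from $I(g)$ to $I(f)$, and since $A(g) \subset I(g)$ and $A(f) \subset I(f)$, it suffices to prove the set-theoretic equality $\vartheta(A(g)) = A(f)$; the bijectivity, continuity and continuity of the inverse then come for free from the restriction of the $I(g) \to I(f)$ homeomorphism. Thus the real content is showing $z \in A(g) \iff \vartheta(z) \in A(f)$.

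First I would recall that $g(z) = f(z/L)$, so that the two functions differ only by the scaling $z \mapsto z/L$. The key elementary observation is that the maximum modulus functions of $f$ and $g$ are comparable in a way that is controlled enough to preserve the fast-escaping growth condition: one has $M(r, g) = M(r/L, f)$, and more importantly, iterated comparisons $M^n(R, g)$ and $M^n(R', f)$ can be made to dominate one another after adjusting the base point $R$, exactly as in the sketch given earlier for $A(f) = \exp A(F)$. So I would fix $R$ large enough that both $M(r,f) > r$ and $M(r,g) > r$ hold beyond it, and use the relation $\vartheta(z) \to \infty$ as $z \to \infty$ together with the conjugacy $\vartheta(g^n(z)) = f^n(\vartheta(z))$ to compare orbits.

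The main step is the forward implication: suppose $z \in A(g)$, so there is $\ell$ with $\operatorname{Re}$-type modulus bounds $|g^{n+\ell}(z)| \geq M^n(R, g)$ for all $n$. Applying $\vartheta$ and using $f^{n+\ell}(\vartheta(z)) = \vartheta(g^{n+\ell}(z))$, I want to conclude $|f^{n+\ell}(\vartheta(z))| \geq M^n(R', f)$ for a suitable $R'$ and all $n$. Here the obstacle is that $\vartheta$ need only satisfy $\vartheta(w) \to \infty$ as $w \to \infty$, with no a priori quantitative lower bound on $|\vartheta(w)|$ in terms of $|w|$ (unlike the quasiconformal, hence H\"older, case in Theorem~\ref{theo:implicationsofaconjugacy} where \eqref{growtheq} was available). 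The cleanest way around this is to use the alternative topological characterisation \eqref{newAdef} of $A$ in terms of the sets $T(g^n(D))$, just as in the proof of Theorem~\ref{theo:implicationsofaconjugacy}: fast escape is equivalent to the orbit eventually avoiding $T(f^n(D))$ for a suitable domain $D$ meeting the Julia set. Since $\vartheta$ is a homeomorphism from $I(g)$ onto $I(f)$ (and $A(g) \subset I(g)$, $A(f) \subset I(f)$), it induces a bijection on the relevant complementary components, and the argument of Theorem~\ref{theo:implicationsofaconjugacy} transfers verbatim once one checks that $\vartheta$ respects the operator $T(\cdot)$ on the orbits of such domains. The delicate point is that $\vartheta$ is only defined on $J(g)$, not on all of $\C$, so I cannot directly push forward the planar domains $T(g^n(D))$; instead I would phrase everything in terms of separation of points of $J$ from infinity within $J(g)$ and $J(f)$, using that $\vartheta$ is a homeomorphism on the escaping sets and that $\vartheta(w) \to \infty$, to show that an orbit is ``trapped far out'' under $g$ if and only if its image is under $f$.

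I expect the principal obstacle to be exactly this mismatch between the global (planar) nature of the definition of $A$ via \eqref{newAdef} and the fact that $\vartheta$ is only a semiconjugacy defined on $J(g)$. I would resolve it by working entirely inside the escaping sets, where $\vartheta$ is a genuine homeomorphism, and by using the characterisation of $A$ within $I$ via the growth condition $|g^{n+\ell}(z)| \ge M^n(R,g)$ directly, together with the comparability of the maximum modulus functions of $f$ and $g$ under the scaling $z \mapsto z/L$ and the property $\vartheta(w) \to \infty$. By the symmetry of the conjugacy (applying the same argument to $\vartheta^{-1} : I(f) \to I(g)$, using $g \circ \vartheta^{-1} = \vartheta^{-1} \circ f$ on the escaping sets), the reverse inclusion $\vartheta^{-1}(A(f)) \subset A(g)$ follows identically, giving $\vartheta(A(g)) = A(f)$ and hence, by restriction of the $I(g) \to I(f)$ homeomorphism, that $\vartheta$ is a homeomorphism from $A(g)$ onto $A(f)$.
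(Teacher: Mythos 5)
Your overall skeleton agrees with the paper's: reduce to the set equality $\vartheta(A(g)) = A(f)$, prove one inclusion and get the other by symmetry, and then obtain the homeomorphism statement by restricting the given homeomorphism $I(g) \to I(f)$. You have also correctly located the crux: $\vartheta(w) \to \infty$ gives no quantitative lower bound on $|\vartheta(w)|$ in terms of $|w|$, so the growth condition $|g^{n+\ell}(z)| \geq M^n(R,g)$ cannot be pushed forward naively. The gap is that neither of your two suggested ways around this obstacle is carried out, and neither can be. The route via the topological characterisation \eqref{newAdef} is blocked for exactly the reason you concede: $T(\cdot)$ is an operation on planar domains, and $\vartheta$ is defined only on $J(g)$; the paper itself remarks, right after the statement of the proposition, that it does not seem possible to use Theorem~\ref{theo:implicationsofaconjugacy} here for precisely this reason. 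Rephrasing the condition as ``separation of points of $J$ from infinity within $J(g)$ and $J(f)$'' does not rescue it: membership of $A$ is a rate-of-escape condition measured against the maximum modulus function of the ambient plane, and this is not part of the data of the topological dynamical system on $I(g)$, so no argument using only the facts that $\vartheta$ is a conjugating homeomorphism of escaping sets with $\vartheta(w)\to\infty$ can decide it. Your fallback --- to use ``the growth condition directly, together with the comparability of the maximum modulus functions and the property $\vartheta(w) \to \infty$'' --- is circular: it invokes exactly the insufficient information you began by ruling out.

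What is missing is a genuinely new quantitative input about this particular $\vartheta$, and the paper extracts it from the construction in \cite{Rigidity}: by \cite[equation (5.2)]{Rigidity} there are a constant $C > 0$ and a domain $W$ containing a neighbourhood of infinity such that $\operatorname{dist}_W(w, \vartheta^{-1}(w)) \leq C$ for $w \in J(f)$, where $\operatorname{dist}_W$ denotes hyperbolic distance in $W$. Combining this with the standard density estimate $\rho_W(w) \geq C'/(|w|\log|w|)$ for large $|w|$ yields a constant $C'' \in (0,1]$ with $|w| \geq |\vartheta^{-1}(w)|^{C''}$ for $w \in J(f)$ with $|w|$ large; that is, $\vartheta$ can distort moduli by at most a fixed power, which is precisely the analogue of the H\"older bound \eqref{growtheq} whose absence you identified. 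With this estimate the proof runs essentially as you intended: for $z \in A(g)$ and $w = \vartheta(z)$, the conjugacy and the comparison $M^n(R',g) > M^n(R,f)$ give $|\vartheta^{-1}(f^{n+\ell}(w))| \geq M^n(R',g) > M^n(R,f)$, hence $|f^{n+\ell}(w)| > (M^n(R,f))^{C''}$, and well-known properties of the maximum modulus function show that this power loss still forces $w \in A(f)$; the reverse inclusion is symmetric. Without an estimate of this kind, which comes from the proof in \cite{Rigidity} rather than from the statement of Theorem~\ref{theo:rigidityconjugacy}, the proposition is out of reach, so your proposal as written has a genuine gap at its central step.
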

\begin{remark}\normalfont
It does not seem possible to use Theorem~\ref{theo:implicationsofaconjugacy} here, since the function $\vartheta$ is only defined on the Julia set.
\end{remark}
\begin{proof}[Proof of Proposition~\ref{prop:vartheta}]
We show that $\vartheta(A(g)) \subset A(f)$. The proof of the reverse inclusion $A(f) \subset \vartheta(A(g))$ is very similar, and is omitted.

We note first, by \cite[equation (5.2)]{Rigidity}, that there is a constant $C>0$ and a domain $W$, which contains a neighbourhood of infinity, such that 
\begin{equation}
\label{Rigeq}
\operatorname{dist}_W(w, \vartheta^{-1}(w)) \leq C, \qfor w \in J(f).
\end{equation}
Here $\operatorname{dist}_W(w,z)$ denotes the hyperbolic distance in $W$ between points $w$ and $z$.

It can be shown, using for example \cite[Theorem 4]{Minda}, that there are constants $C', R_0 > 0$ such that the hyperbolic density in $W$, denoted by $\rho_W$, satisfies
\begin{equation}
\label{rhoeq}
\rho_W(w) \geq \frac{C'}{|w| \log|w|}, \qfor |w| > R_0.
\end{equation}

Without loss of generality, we can assume that $R_0$ is chosen sufficiently large that $|w| > R_0$ implies that $|\vartheta^{-1}(w)| > 1$. We claim that there is a constant $C'' \in (0,1]$ such that
\begin{equation}
\label{thetaandw}
|w| \geq |\vartheta^{-1}(w)|^{C''}, \qfor w \in J(f) \text{ such that } |w| > R_0.
\end{equation}

To prove this claim, suppose that $w \in J(f)$ and that $|w| > R_0$. We can assume that $|w| < |\vartheta^{-1}(w)|$, as otherwise there is nothing to prove. By \eqref{Rigeq} and \eqref{rhoeq}, we have
\[
C \geq \operatorname{dist}_W(w, \vartheta^{-1}(w)) \geq \int_{|w|}^{|\vartheta^{-1}(w)|} \frac{C'}{t \log t} \ dt = C' \log\frac{\log |\vartheta^{-1}(w)|}{\log |w|}.
\]
The claim then follows with $C'' = \exp(-C/C')$. 

We also have that $M(RL, g) = M(R, f)$, for $R > 0$. Hence if $R>R_0$ is sufficiently large that $M(R, f) > R$, then we can choose $R'>0$ such that 
\begin{equation}
\label{Mgrows}
M^n(R', g) > M^n(R, f), \qfor n \in \N.
\end{equation}

Now, suppose that $z \in A(g)$, and set $w = \vartheta(z)$. Note that $z \in I(g)$, and so $w \in I(f)$ because $\vartheta$ is a homeomorphism from $I(g)$ to $I(f)$. Without significant loss of generality, we can suppose that all iterates of $z$ under $g$, and all iterates of $w$ under $f$, are of modulus greater than $R$. Since $z \in A(g)$, there is an $\ell \in \N$ such that
\[
|g^{n+\ell}(z)| \geq M^n(R', g), \qfor n \in \N.
\]

Hence, by \eqref{Mgrows}, together with the fact that $f, g$ are conjugate,
\[
|\vartheta^{-1}(f^{n+\ell}(w))| \geq M^n(R', g) > M^n(R, f), \qfor n \in \N.
\]

It then follows by \eqref{thetaandw} that
\[
|f^{n+\ell}(w)| > (M^n(R, f))^{C''}, \qfor n \in \N.
\]

By well-known properties of the maximum modulus function, we can deduce from this that $w \in A(f)$, as required.
\end{proof}
\begin{proof}[Proof of Theorem \ref{theo:hyperbolic-esc}]
Let $g$ be as in the statement of Theorem~\ref{theo:rigidityconjugacy}. It follows from Theorem~\ref{theo:disjoint-type-context} and Theorem~\ref{theo:disjoint-type} that $$\meanderingendpoints(g) \cup \{\infty\} = J(g) \setminus A(g) \cup \{\infty\}$$ is totally separated. In particular $(I(g) \setminus A(g)) \cup \{\infty\}$ is totally separated, since, as is well-known, $I(g) \subset J(g)$. 

The map $\vartheta$ of Theorem~\ref{theo:rigidityconjugacy} is a homeomorphism from $I(g)$ to $I(f)$ and, by Proposition \ref{prop:vartheta}, is also a homeomorphism from $A(g)$ to $A(f)$. Set $\vartheta(\infty) = \infty$, so that $\vartheta$ is a homeomorphism
\[
\vartheta : (I(g) \setminus A(g)) \cup \{\infty\} \to (I(f) \setminus A(f)) \cup \{\infty\}.
\]

We can deduce, for example by \cite[Observation 2.2]{LasseNada}, that $(I(f) \setminus A(f)) \cup \{\infty\}$ is totally separated. The result then follows, since \cite[Theorem 1.2]{DevHairs} implies that $\escapingandmeanderingendpoints(f) = I(f) \setminus A(f)$.
\end{proof}
\begin{remark}\normalfont
It is well-known that $f \in \B$ is hyperbolic if and only if the \emph{postsingular set} $P(f)$ is compactly contained in the Fatou set. A larger class of functions are those $f \in \B$ such that $P(f) \cap F(f)$ is compact and $P(f) \cap J(f)$ is finite; these are known as \emph{subhyperbolic}. Mihaljevi\'c-Brandt \cite[Theorem 5.1]{HMB} showed that the semiconjugacy of \cite[Theorem~5.2]{Rigidity} can be constructed in this larger class. She also showed \cite[Corollary 1.3]{HMB} that if $f$ is \emph{strongly} subhyperbolic, then $J(f)$ is a pinched Cantor bouquet. It can then be deduced that the hypothesis that $f$ is hyperbolic in Theorem~\ref{theo:hyperbolic-esc} can be replaced by the weaker assumption that $f$ is strongly subhyperbolic. We refer to \cite{HMB} for precise definitions of terms in this remark.
\end{remark}
%
%
%
%
\section{Spiders' webs}
\label{spw_sec}
Before proving Theorem~\ref{theo:disjoint-type-spw}, we first prove the following. 
\begin{theorem}
\label{theo:spw}
Suppose that $f$ is a \tef, and that $X \subset \C$ has no bounded components, and is such that $f(X) \subset X$. If there is a bounded domain $G$ such that $G \cap J(f) \ne \emptyset$ and $\partial G \subset X$, then $X$ is a spider's web.
\end{theorem}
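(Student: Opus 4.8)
The plan is to verify the two requirements in the definition of a spider's web directly: that $X$ is connected, and that there is a nested sequence of bounded simply connected domains exhausting $\C$ with boundaries in $X$. The crux is the following claim: for all sufficiently large $R > 0$ there is a bounded simply connected domain $\Omega_R$ with $\overline{D(0,R)} \subset \Omega_R$ and $\partial \Omega_R \subset X$. Granting this, a nested exhausting family $(G_n)_{n\in\N}$ is obtained routinely: having set $G_n := \Omega_{R_n}$, one chooses $R_{n+1}$ so large that $\overline{D(0,R_{n+1})} \supset \overline{G_n}$, whence $G_{n+1} \supset G_n$; since $R_n \to \infty$ the union is $\C$.

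To prove the claim I would exploit the blowing-up property of the Julia set. Since $G$ is open and meets $J(f)$, the forward images $f^n(G)$ eventually cover every compact subset of $\C$ that avoids the (at most one) exceptional point $a$ of $f$. Fixing $R$ with $R \neq |a|$, I choose $N$ with $f^N(G) \supset \{ z : |z| = R \}$. Writing $V := f^N(G)$, this is a bounded open set, and since $f^N$ is an open map one checks that $\partial V \subset f^N(\partial G)$; as $\partial G \subset X$ and $f(X) \subset X$ it follows that $\partial V \subset X$. Let $V^*$ be the component of $V$ containing the circle $\{|z| = R\}$, and set $\Omega_R := T(V^*)$, which is open because $V^*$ is open. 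Then $\Omega_R \supset \overline{D(0,R)}$, since any path from the disc to $\infty$ must cross the circle and hence meet $V^*$; its complement $\widehat{\C} \setminus \Omega_R$ is the unbounded complementary component of $V^*$ together with $\infty$, hence connected, so $\Omega_R$ is a simply connected domain; and $\partial \Omega_R \subset \partial V^* \subset \partial V \subset X$. This establishes the claim.

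Finally, connectedness of $X$ follows from the exhaustion together with the hypothesis that $X$ has no bounded components, which forces \emph{every} component of $X$ to be unbounded. If $X_1$ is any component, then for all large $n$ the set $X_1$ meets both $G_n$ (which eventually contains any prescribed point of $X_1$) and the unbounded set $\C \setminus \overline{G_n}$, so, being connected, it meets $\partial G_n$. Since $\partial G_n$ is the boundary of a bounded simply connected domain it is connected, and $\partial G_n \subset X$, so it lies in a single component of $X$; as every component meets it, there can be only one component. Hence $X$ is connected and is therefore a spider's web. The step I expect to be most delicate is the filling-in construction, namely verifying that $\Omega_R = T(V^*)$ is genuinely a simply connected domain with $\partial \Omega_R \subset X$; this rests on the standard but slightly fiddly point-set topology of complementary components (that the boundary of a complementary component lies in the boundary of the set, and that bounded complementary components attach to $V^*$), together with the fact that the boundary of a bounded simply connected domain is connected, which is what makes the connectedness argument run.
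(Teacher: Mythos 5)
Your proof is correct and follows essentially the same route as the paper: both fill in forward images of $G$ (your $\Omega_R = T(V^*)$ is exactly the paper's $G_n = T(f^n(G))$, since $f^N(G)$ is already connected and so the passage to the component $V^*$ is vacuous), use the blowing-up property of $J(f)$ to nest these domains and exhaust $\C$, and push $\partial G \subset X$ forward using $f(X) \subset X$ together with the open mapping argument $\partial T(f^n(G)) \subset \partial f^n(G) \subset f^n(\partial G)$. The only difference is that you spell out the final connectedness step (every component of $X$ is unbounded, hence meets each connected boundary $\partial G_n \subset X$), which the paper leaves implicit as a quick deduction.
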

\begin{proof}
For each $n \in \N$, let $G_n= T(f^n(G))$. We show that there exists a subsequence $(n_k)_{k\in\N}$ such that 
\begin{equation}
\label{Gnsub3} 
G_{n_k} \subset G_{n_{k+1}} \text{ and } \partial G_{n_k} \subset X, \text{ for } k \in \mathbb{N}, \text{ and } \bigcup_{k\in\N} G_{n_k}= \mathbb{C}.
\end{equation}
 
To achieve this, let $(R_n)_{n\in\N}$ be a sequence of positive real numbers tending to infinity. We define the sequence $(n_k)_{k\in\N}$ inductively as follows. First we choose $n_1$ sufficiently large that $\{ z \in \C : |z| < R_1 \} \subset G_{n_1}$; this is possible because $G$ meets $J(f)$, and by the well-known ``blowing-up'' property of the Julia set.  Next, if $n_k$ is defined, then we choose $n_{k+1}$ sufficiently large that $$\{ z \in \C : |z| < R_{k+1} \}  \cup G_{n_k} \subset G_{n_{k+1}};$$ this is possible for the same reason.  

Now, for each $n \in \mathbb{N},$ $\partial G_n \subset \partial f^n(G) \subset f^n(\partial G),$ since $G$ is a bounded domain. Also, as $\partial G  \subset X$ and $f(X) \subset X$, we have that
\begin{equation*}
\partial G_n \subset f^n(\partial G) \subset X, \qfor n \in \N.
\end{equation*}
We can then deduce that \eqref{Gnsub3} holds.

The fact that $X$ is a spider's web can quickly be deduced from \eqref{Gnsub3}, together with the fact that $X$ has no bounded components.
\end{proof}

\begin{proof}[Proof of Theorem~\ref{theo:disjoint-type-spw}]
In the proof of Theorem~\ref{theo:disjoint-type} we showed that every point of $\meanderingendpoints(f)$ can be separated from infinity by a continuum $\gamma \in A(f) \cup F(f)$. Choose any point in $z \in \meanderingendpoints(f)$, and let $G$ to be the component of the complement of $\gamma$ containing $z$. Then $z \in J(f)$ and $\partial G \subset (A(f) \cup F(f))$.

It is well-known that $f(A(f)) \subset A(f)$ and $f(F(f)) \subset F(f)$. All components of $A(f)$ are unbounded, by \cite[Theorem 1]{qnofFandE}. Since $f$ is of disjoint-type, $F(f)$ has one component which is unbounded.

It follows by Theorem~\ref{theo:spw} that $A(f) \cup F(f)$ is a spider's web.
\end{proof}  

\section{An example}
\label{examples_sec}
\begin{figure}
  \subfloat[$J(g_{0.995})$]{\includegraphics[width=.7\textwidth,height=.5\textwidth]{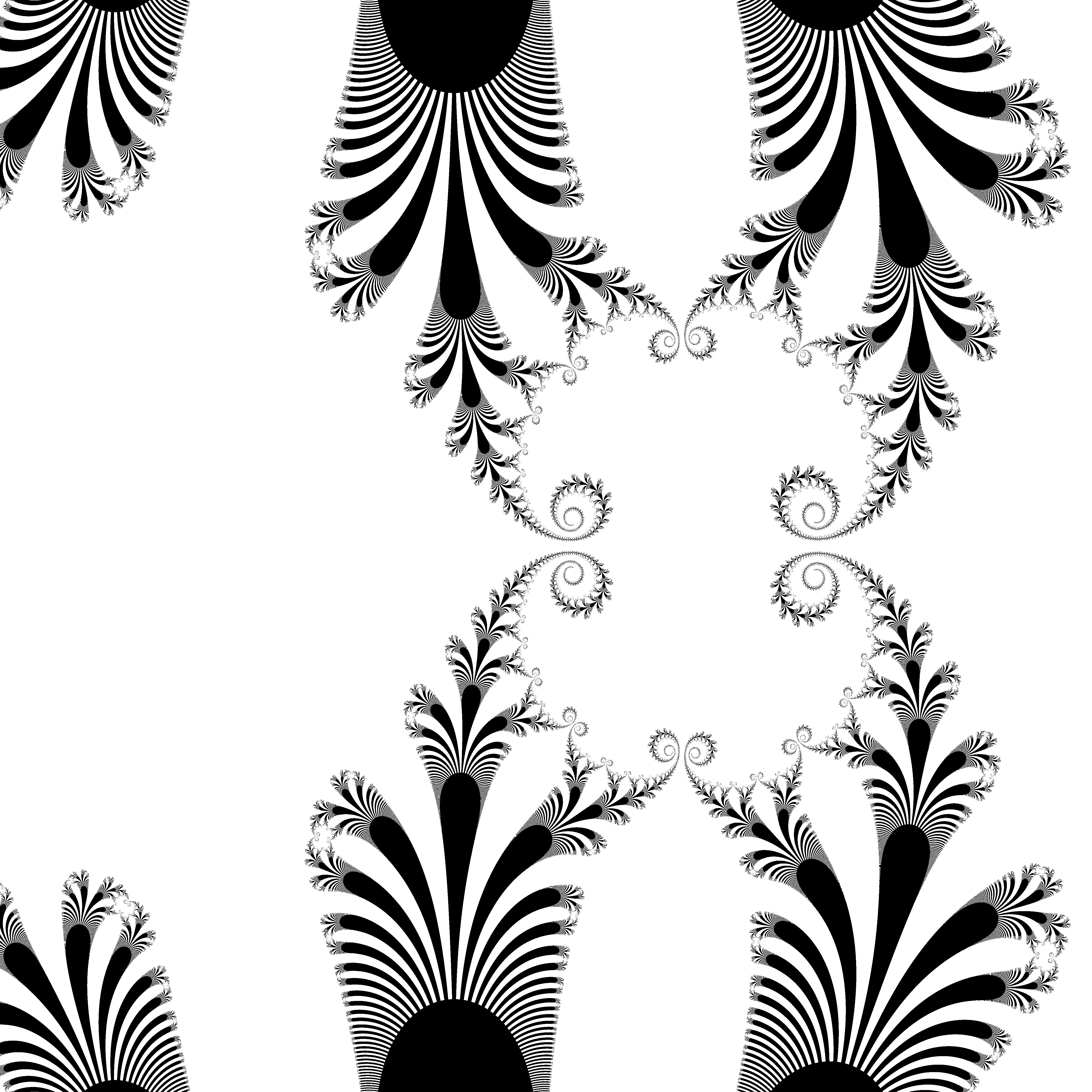}}\hfill
  \subfloat[$J(g_1)$]{\includegraphics[width=.7\textwidth,height=.5\textwidth]{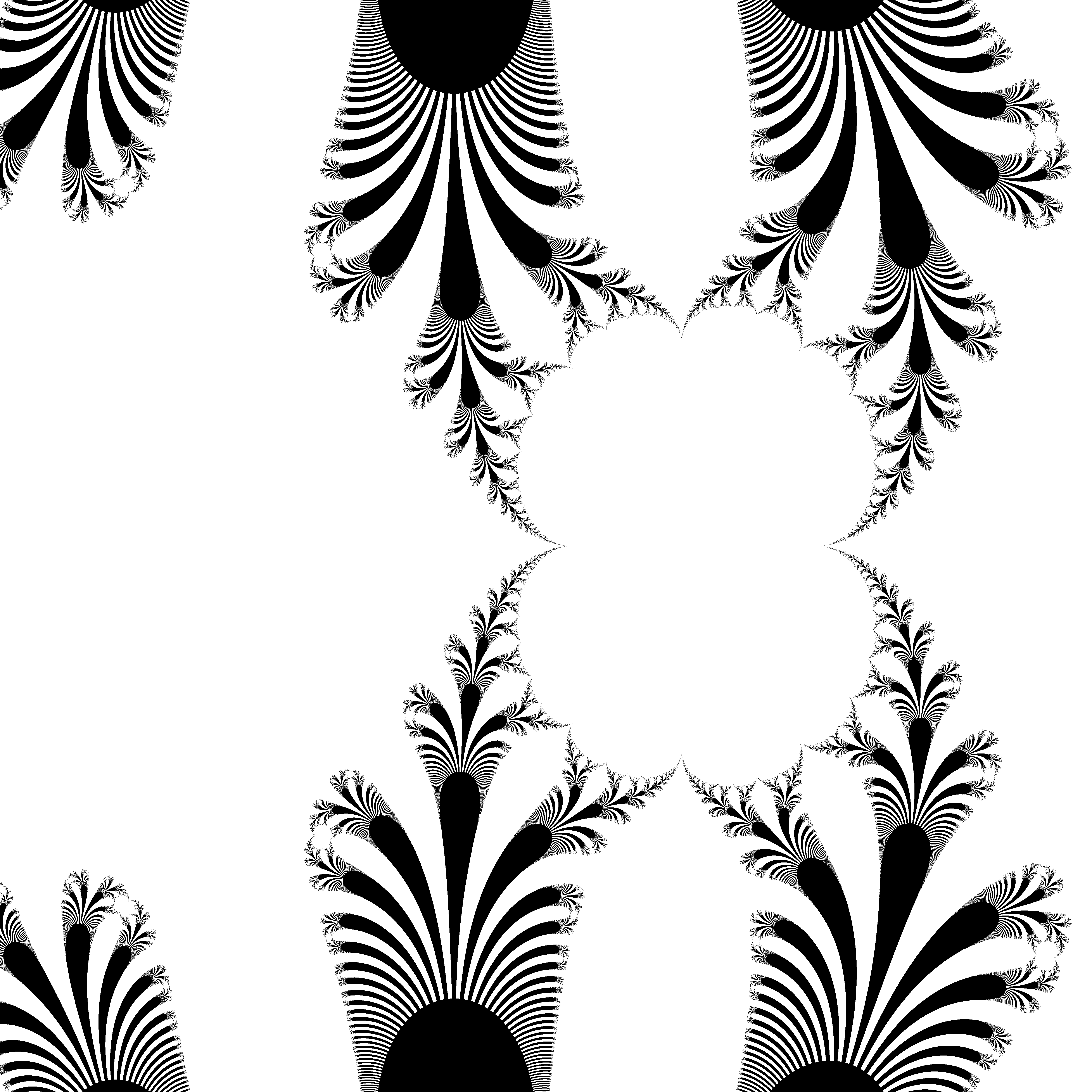}}\hfill
	\subfloat[$J(g_{1.1})$]{\includegraphics[width=.7\textwidth,height=.5\textwidth]{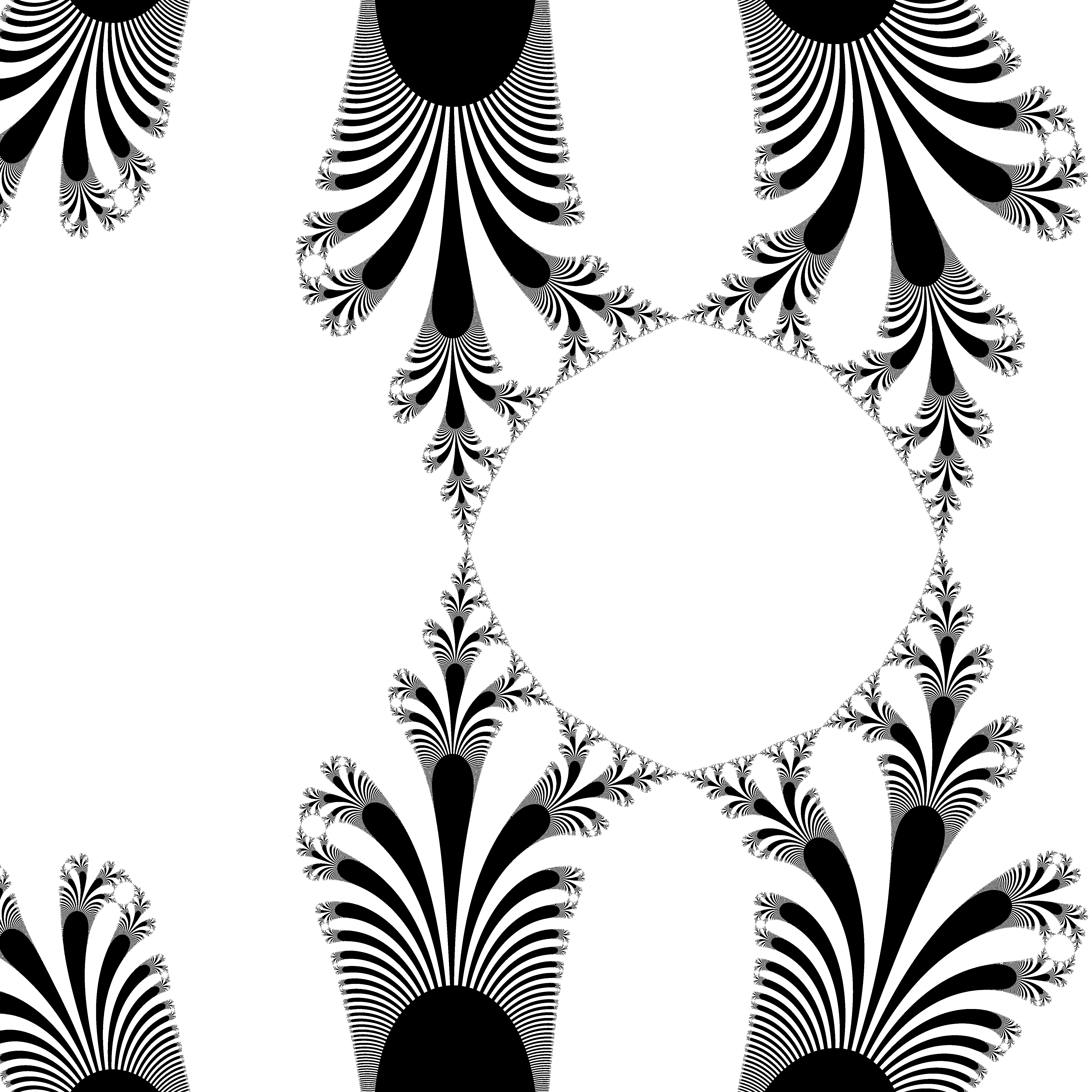}}
   \caption{\label{fig:figure} These figures show Julia sets, in black, for functions in the family $g_\lambda$ defined in \eqref{gdef}. In each the top left corner is at $-0.35+1.2i$ and the bottom right corner is at $2.23-1.2i$.}
\end{figure}

In this section we briefly discuss the dynamics of maps in the family $g_\lambda$. Recall that this family was defined in \eqref{gdef} by
\[
g_\lambda(z) := \lambda z^2 \exp(z - z^2), \qfor \lambda > 0.
\]
It is easy to see that these functions have finite order.

It can be shown by a calculation that if $\lambda \in (0, 1)$, then the function $g_{\lambda}$ is a disjoint type function, and so the hypotheses of Theorem~\ref{theo:disjoint-type} are satisfied; see the top image of Figure~\ref{fig:figure}.

Suppose next that $\lambda$ is close to, but greater than, one. In this case $g_\lambda$ has one asymptotic value, at the origin, and three critical values; zero, $p_\lambda > 1$, and $q_\lambda \in (0,1)$. There is a superattracting fixed point at the origin, and $q_\lambda$ lies in the (unbounded) immediate attracting basin of this point. There is also an attracting fixed point slightly greater than one, and $p_\lambda$ lies in the immediate attracting basin, $U$, of this point. It follows that $g_\lambda$ is hyperbolic. It also follows, by \cite[Theorem 1.10]{BFR}, that $U$ is a bounded Jordan domain (in fact a quasidisc). Hence the conclusion of Theorem~\ref{theo:disjoint-type} does not hold, since $\partial U \subset \meanderingendpoints(g_{\lambda})$; see the lower image of Figure~\ref{fig:figure}.

Finally, in the case that $\lambda = 1$, there is a parabolic fixed point at $1$, and $p_1$ lies in the immediate parabolic basin of $1$, $V$ say. It follows that $g_1$ is not hyperbolic. We note that this function lies in a class of functions called \emph{parabolic} in forthcoming work by Alhamd. Combining her work with techniques in the proof of \cite[Theorem 1.10]{BFR}, it seems likely that it can be shown that $V$ is bounded. Hence the conclusion of Theorem~\ref{theo:disjoint-type} does not hold; see the middle image of Figure~\ref{fig:figure}. \\

%
%
%
%
%
\emph{Acknowledgment:} The authors are grateful to Dan Nicks and Lasse Rempe-Gillen for many useful discussions, and to Simon Albrecht for a detailed reading of a draft.
%
%
%
%
%
%
\newpage
\bibliographystyle{acm}
\bibliography{References}

\begin{thebibliography}{10}

\bibitem{LasseNada}
{\sc Alhabib, N., and Rempe-Gillen, L.}
\newblock Escaping {E}ndpoints {E}xplode.
\newblock {\em Comput. Methods Funct. Theory 17}, 1 (2017), 65--100.

\bibitem{brushing}
{\sc Bara\'nski, K., Jarque, X., and Rempe, L.}
\newblock Brushing the hairs of transcendental entire functions.
\newblock {\em Topology Appl. 159}, 8 (2012), 2102--2114.

\bibitem{bergweiler93}
{\sc Bergweiler, W.}
\newblock Iteration of meromorphic functions.
\newblock {\em Bull. Amer. Math. Soc. 29}, 2 (1993), 151--188.

\bibitem{bergweiler2017}
{\sc Bergweiler, W.}
\newblock Lebesgue measure of julia sets and escaping sets of certain entire
  functions.
\newblock {\em Preprint, arXiv:1708.02819\/} (2017).

\bibitem{BFR}
{\sc Bergweiler, W., Fagella, N., and Rempe-Gillen, L.}
\newblock Hyperbolic entire functions with bounded {F}atou components.
\newblock {\em Comment. Math. Helv. 90}, 4 (2015), 799--829.

\bibitem{semiconjugation}
{\sc Bergweiler, W., and Hinkkanen, A.}
\newblock On semiconjugation of entire functions.
\newblock {\em Math. Proc. Cambridge Philos. Soc. 126}, 3 (1999), 565--574.

\bibitem{EandL}
{\sc Eremenko, A.~E., and Lyubich, M.~Y.}
\newblock Dynamical properties of some classes of entire functions.
\newblock {\em Ann. Inst. Fourier (Grenoble) 42}, 4 (1992), 989--1020.

\bibitem{Fweb}
{\sc Evdoridou, V.}
\newblock Fatou's web.
\newblock {\em Proc. Amer. Math. Soc. 144}, 12 (2016), 5227--5240.

\bibitem{LasseVasso}
{\sc Evdoridou, V., and Rempe-Gillen, L.}
\newblock Non-escaping endpoints do not explode.
\newblock {\em Preprint, arXiv:1707.01843v1\/} (2017).

\bibitem{david}
{\sc Mart\'{i}-Pete, D.}
\newblock Escaping points and semiconjugation of holomorphic functions.
\newblock {\em In preparation\/} (2018).

\bibitem{mayer90}
{\sc Mayer, J.}
\newblock An explosion point for the set of endpoints of the {J}ulia set of
  {$\lambda\exp(z)$}.
\newblock {\em Ergodic Theory Dynam. Systems 10}, 1 (1990), 177--183.

\bibitem{HMB}
{\sc Mihaljevi\'c-Brandt, H.}
\newblock Semiconjugacies, pinched {C}antor bouquets and hyperbolic orbifolds.
\newblock {\em Trans. Amer. Math. Soc. 364}, 8 (2012), 4053--4083.

\bibitem{milnor06}
{\sc Milnor, J.}
\newblock {\em Dynamics in one complex variable}, third~ed., vol.~160 of {\em
  Annals of {M}athematics Studies}.
\newblock Princeton University Press, Princeton, 2006.

\bibitem{Minda}
{\sc Minda, D.}
\newblock Inequalities for the hyperbolic metric and applications to geometric
  function theory.
\newblock In {\em Complex analysis, {I} ({C}ollege {P}ark, {M}d., 1985--86)},
  vol.~1275 of {\em Lecture Notes in Math.} Springer, Berlin, 1987,
  pp.~235--252.

\bibitem{Rigidity}
{\sc Rempe, L.}
\newblock Rigidity of escaping dynamics for transcendental entire functions.
\newblock {\em Acta Math. 203}, 2 (2009), 235--267.

\bibitem{DevHairs}
{\sc Rempe, L., Rippon, P.~J., and Stallard, G.~M.}
\newblock Are {D}evaney hairs fast escaping?
\newblock {\em J. Difference Equ. Appl. 16}, 5-6 (2010), 739--762.

\bibitem{classBornotclassB}
{\sc Rempe-Gillen, L., and Sixsmith, D.}
\newblock {Hyperbolic entire functions and the Eremenko-Lyubich class: Class
  $\mathcal{B}$ or not class $\mathcal{B}$?}
\newblock {\em Math. Z.\/} (2016), 1--18.

\bibitem{qnofFandE}
{\sc Rippon, P.~J., and Stallard, G.~M.}
\newblock On questions of {F}atou and {E}remenko.
\newblock {\em Proc. Amer. Math. Soc. 133}, 4 (2005), 1119--1126.

\bibitem{Slow}
{\sc Rippon, P.~J., and Stallard, G.~M.}
\newblock Slow escaping points of meromorphic functions.
\newblock {\em Trans. Amer. Math. Soc. 363}, 8 (2011), 4171--4201.

\bibitem{Fast}
{\sc Rippon, P.~J., and Stallard, G.~M.}
\newblock Fast escaping points of entire functions.
\newblock {\em Proc. London Math. Soc. (3) 105}, 4 (2012), 787--820.

\bibitem{rippon-stallard13}
{\sc Rippon, P.~J., and Stallard, G.~M.}
\newblock Baker's conjecture and {E}remenko's conjecture for functions with
  negative zeros.
\newblock {\em J. Anal. Math. 120\/} (2013), 291--309.

\bibitem{RRRS}
{\sc Rottenfusser, G., R{\"u}ckert, J., Rempe, L., and Schleicher, D.}
\newblock Dynamic rays of bounded-type entire functions.
\newblock {\em Ann. of Math. (2) 173}, 1 (2011), 77--125.

\bibitem{DaveSurvey}
{\sc Sixsmith, D.}
\newblock Dynamics in the {E}remenko-{L}yubich class.
\newblock {\em Preprint, arXiv:1709.09095v1\/} (2018).

\end{thebibliography}
\end{document}